\numberwithin{equation}{section} 
\numberwithin{figure}{section} 
\newtheorem{thm}{Theorem}
\newtheorem{thm}{Theorem}[section]
  \theoremstyle{definition}
  \newtheorem{defi}[thm]{Definition}
  \theoremstyle{definition}
  \newtheorem{exple}[thm]{Example}
   \theoremstyle{definition}
  \newtheorem{prop}[thm]{Proposition}
   \theoremstyle{definition}
  \theoremstyle{definition}
  \newtheorem{lem}[thm]{Lemma}
    \theoremstyle{definition}
  \newtheorem{cor}[thm]{Corollary}
  \theoremstyle{theorem}
\newcommand{\Sk}{  \mathcal{C}_k  }
\newcommand{\Spk}{  \mathcal{C}^p_k  }
\newcommand{\Sak}{  \mathcal{C}^a_k  }
\newcommand{\Spak}{  \mathcal{C}^{pa}_k  }
\newcommand{\Sck}{  \mathcal{C}^{c}_k  }
\newcommand{\Scuck}{  \mathcal{C}^{cm}_k  }
\newcommand{\Skmu}{  \mathcal{C}_k  }
\newcommand{\Spkmu}{  \mathcal{C}^p_{k-1}  }
\newcommand{\So}{  \mathcal{C}_0  }
\newcommand{\Spo}{  \mathcal{C}^p_0  }
\newcommand{\Sao}{  \mathcal{C}^a_0  }
\newcommand{\Spao}{  \mathcal{C}^{pa}_0  }
\newcommand{\Smu}{  \mathcal{C}_{-1}  }
\newcommand{\Spmu}{  \mathcal{C}^p_{-1}  }
\newcommand{\Zk}{  \mathbf{{C}}_k  }
\newcommand{\Zpk}{  \mathbf{{C}}^p_k  }
\newcommand{\Zak}{  \mathbf{{C}}^a_k  }
\newcommand{\Zpak}{  \mathbf{{C}}^{pa}_k  }
\newcommand{\Zo}{  \mathbf{{C}}_0  }
\newcommand{\Zpo}{  \mathbf{{C}}^p_0  }
\newcommand{\Zao}{  \mathbf{{C}}^a_0  }
\newcommand{\Zpao}{  \mathbf{{C}}^{pa}_0  }
\newcommand{\Zmu}{  \mathbf{{C}}_{-1}  }
\newcommand{\Zpmu}{  \mathbf{{C}}^p_{-1}  }
\newcommand{\Zkmu}{  \mathbf{{C}}_{k-1}  }
\newcommand{\Zpkmu}{  \mathbf{{C}}^p_{k-1}  }
\newcommand{\Zkt}{  \mathbf{{C}}_{{k,t}}  }
\newcommand{\Zpkt}{  \mathbf{{C}}^p_{{k,t}}  }
\newcommand{\Zakt}{  \mathbf{{C}}^a_{{k,t}}  }
\newcommand{\Zpakt}{  \mathbf{{C}}^{pa}_{{k,t}}  }
\newcommand{\ZAkt}{  \mathbf{{C}}^A_{{k,t}}  }
\newcommand{\ZPAkt}{  \mathbf{{C}}^{pA}_{{k,t}}  }
\newcommand{\Zot}{  \mathbf{{C}}_{{0,t}}  }
\newcommand{\Zpot}{  \mathbf{{C}}^p_{{0,t}}  }
\newcommand{\ZAot}{  \mathbf{{C}}^A_{{0,t}}  }
\newcommand{\ZPAot}{  \mathbf{{C}}^{pA}_{{0,t}}  }
\newcommand{\Zmut}{  \mathbf{{C}}_{{-1,t}}  }
\newcommand{\Zpmut}{  \mathbf{{C}}^p_{{-1,t}}  }
\newcommand{\ZAmut}{  \mathbf{{C}}^A_{{-1,t}}  }
\newcommand{\ZPAmut}{  \mathbf{{C}}^{pA}_{{-1,t}}  }
\newcommand{\Zaut}{  \mathbf{{C}}^a_{{1,t}}  }
\newcommand{\Zpaut}{  \mathbf{{C}}^{pa}_{{1,t}}  }
\newcommand{\ZAut}{  \mathbf{{C}}^A_{{1,t}}  }
\newcommand{\ZPAut}{  \mathbf{{C}}^{pA}_{{1,t}}  }
\newcommand{\Zkmut}{  \mathbf{{C}}_{{k-1},t}  }
\newcommand{\Zpkmut}{  \mathbf{{C}}^p_{{k-1},t}  }
\newcommand{\ZAkmut}{  \mathbf{{C}}^A_{{k-1,t }}  }
\newcommand{\ZPAkmut}{  \mathbf{{C}}^{pA}_{{k-1,t }}  }
\newcommand{\SW}{{\Sigma W}}
\newcommand{\SWt}{{\Sigma W}_t}
\newcommand{\p}{\mathbf{p}}
\newcommand{\comm}{\mathbf{C}_{\operatorname{Comm}}}
\newcommand{\perm}{\mathbf{C}_{\operatorname{Perm}}}
\newcommand{\spl}{\Sigma \mathbf{C}_{\operatorname{PreLie}}}
\newcommand{\Slie}{\Sigma \mathbf{C}_{\operatorname{Lie}}}
\newcommand{\Stlie}{\Sigma_t \mathbf{C}_{\operatorname{Lie}}}
\newcommand{\HAL}{\operatorname{HAL}}
\newcommand{\HALp}{\operatorname{HAL^p}}
\newcommand{\HALa}{\operatorname{HAL^A}}
\newcommand{\HALpa}{\operatorname{HAL^{pA}}}
\title{Action of the symmetric groups on the homology of the hypertree posets}
\author{Bérénice Oger}
\thanks{Institut Camille Jordan, UMR 5208 \\
 Université Claude Bernard Lyon 1\\
 Bât. Jean Braconnier n°101 \\
 43 Bd du 11 novembre 1918 \\
 69622 Villeurbanne Cedex \\ 
 \email{oger@math.univ-lyon1.fr}}
\date{}
\begin{document}

\maketitle

\selectlanguage{english}

\begin{abstract}
The set of hypertrees on $n$ vertices can be endowed with a poset structure. J. McCammond and J. Meier computed the dimension of the unique non zero homology group of the hypertree poset. We give another proof of their result and use the theory of species to determine the action of the symmetric group on this homology group, which is linked with the anti-cyclic structure of the $\operatorname{Prelie}$ operad. We also compute the action on the Whitney homology of the poset.
\end{abstract}

\selectlanguage{french}

\begin{abstract}
L'ensemble des hyperarbres à $n$ sommets peut être muni d'un ordre partiel. J. McCammond et J. Meier ont calculé la dimension de l'unique groupe d'homologie non trivial du poset des hyperarbres. Après avoir donné une autre preuve de ce résultat, nous utilisons la théorie des espèces pour déterminer l'action du groupe symétrique sur ce groupe, que nous relions à la structure anti-cyclique de l'opérade $\operatorname{Prelie}$. Nous calculons aussi l'action du groupe symétrique sur l'homologie de Whitney du poset.

\end{abstract}

\selectlanguage{english}

\tableofcontents

\section*{Introduction}

	The notion of hypertree has been introduced by C. Berge \cite{Berge} during the 1980's, as a generalization of trees whose edges can contain more than two vertices. Several studies on hypertrees have been led such as the computation of the number of hypertrees on $n$ vertices by L. Kalikow in \cite{Kali} and by Smith and D. Warme in \cite{Warm}. For a finite set $I$, we can endow the set of hypertrees on the vertex set $I$ with a structure of poset: given two hypertrees $H$ and $K$, $H \preceq K$ if each edge in $K$ is a subset of some edge in $H$. These hypertree posets have been used for the study of automorphisms of free groups and free products in papers of D. McCullough-A. Miller \cite{McCuMi}, N. Brady-J. McCammond-J. Meier-A. Miller \cite{BMcCMM}, J. McCammond-J. Meier \cite{McCM} and C. Jensen-J. McCammond-J. Meier \cite{JMcCM} and \cite{JMcCM2}. In the article \cite{BMcCMM}, the Cohen-Macaulayness of the poset is proven: the poset has only one non trivial homology group. The reduced Euler characteristic of the poset have then been computed in \cite{McCM}: the unique non trivial homology group has its dimension equals to $(n-1)^{n-2}$. Taking the set $\{1, \ldots, n\}$ for $I$, the action of the symmetric group $\mathfrak{S}_n$ on $I$ induces an action on the poset of hypertrees on $I$ compatible with the differential: this provides an action of the symmetric group on the unique non trivial homology group of the poset. In the article \cite{ChHyp}, F. Chapoton computed the characteristic polynomial of the poset and gave a conjecture for the representation of the symmetric group $\mathfrak{S}_n$ on the homology and on the Whitney homology of the poset.

	This article solves the conjecture of F. Chapoton in theorems \ref{Zmu} and \ref{goalt}. The dimension computed by J. McCammond and J. Meier turns out to be also the number of labelled rooted trees on $n-1$ vertices, which is the dimension of the vector space $\operatorname{PreLie}(n-1)$, the component of arity n-1 of the PreLie operad. As the operad  $\operatorname{PreLie}$ is an anti-cyclic operad, as proven in \cite{ChAOp}, the action of the symmetric group $\mathfrak{S}_{n-1}$ on $\operatorname{PreLie}(n-1)$ induces an action of $\mathfrak{S}_n$ on $\operatorname{PreLie}(n-1)$. In theorem \ref{Zmu}, we prove that the representation of $\mathfrak{S}_n$ on $\operatorname{PreLie}(n-1)$ and the representation of $\mathfrak{S}_n$ on the poset homology are isomorphic up to
tensor product by the sign representation. The theorem \ref{goalt} is a refinement of this theorem in which appears a type of hypertrees decorated by $\Sigma \operatorname{Lie}$: the action of the symmetric group on the unique non trivial homology group of the poset is the same as the action of the symmetric group on these decorated hypertrees.

We recommend to read appendix \ref{rappel espèces} and the first two chapters of the book \cite{BLL} for an introduction to species theory which will be used in the article. In the first part of the article, we recall the construction of the homology group of a poset. In the second part, we determine relations between hypertree and pointed hypertree species and then, give a new proof for J. McCammond and J. Meier's result on the dimension of the poset homology group in the third part. In the fourth part, we use the relations between species, established in the first part, to compute the action of the symmetric group on this homology group. In the last part, we compute the action of the symmetric group on Whitney homology.

\section{Construction of the homology of the hypertree poset}
	
	\subsection{Definition of the poset}
	
	The hypertrees and the associated poset are described by F. Chapoton in the article \cite{ChHyp}. We briefly recall their definitions.
	
		\subsubsection{Hypergraphs and hypertrees}

		\begin{defi}
		 An \textbf{hypergraph (on a set $V$)} is an ordered pair $(V,E)$ where $V$ is a finite set and $E$ is a collection of elements of cardinality at least two, belonging to the power set $\mathcal{P}(V)$. The elements of $V$ are called \textbf{vertices} and these of $E$ are called \textbf{edges}.
		 \end{defi}

		\begin{exple}
		  An example of hypergraph on $\{1,2,3,4,5,6,7\}$:

\begin{center}
\begin{tikzpicture}[scale=1]
\draw (0,0) -- (0,1) node[midway, left]{A};
\draw (0,1) -- (1,1) node[midway, above]{B};
\draw[black, fill=gray!40] (1,1) -- (1,0) -- (2,0) -- (2,1) -- (1,1);
\draw(1.5,0.5) node{C};
\draw[black, fill=gray!40] (2,0) -- (2,1)--(3,1) -- (2,0);
\draw(2.30,0.70) node{D};
\draw[black, fill=white] (0,0) circle (0.2);
\draw[black, fill=white] (0,1) circle (0.2);
\draw[black, fill=white] (1,1) circle (0.2);
\draw[black, fill=white] (1,0) circle (0.2);
\draw[black, fill=white] (2,0) circle (0.2);
\draw[black, fill=white] (2,1) circle (0.2);
\draw[black, fill=white] (3,1) circle (0.2);
\draw(0,0) node{$4$};
\draw(0,1) node{$7$};
\draw(1,1) node{$6$};
\draw(1,0) node{$5$};
\draw(2,1) node{$1$};
\draw(2,0) node{$2$};
\draw(3,1) node{$3$};
\end{tikzpicture}.

\end{center}

		\end{exple}

		\begin{defi} Let $H=(V,E)$  be a hypergraph.
		
		 A \textbf{walk from a vertex or an edge $d$ to a vertex or an edge $f$ in $H$} is an alternating sequence of vertices and edges beginning by $d$ and ending by $f$ $(d, \ldots, e_i, v_i, e_{i+1}, \ldots, f)$ where for all $i$, $v_i \in V$, $e_i \in E$ and $\{v_i,v_{i+1}\} \subseteq e_i$. The \textit{length} of a walk is the number of edges and vertices in the walk.
		\end{defi}

\begin{exple} In the previous example, there are several walks from $4$ to $2$: $(4,A,7,B,6,C,2)$ and $(4,A,7,B,6,C,1,D,3,D,2)$. A walk from $C$ to $3$ is $(C,1,D,3)$
\end{exple}

\begin{defi} An \textbf{hypertree} is a non empty hypergraph $H$ such that, given any vertices $v$ and $w$ in $H$, 
\begin{itemize}
\item there exists a walk from $v$ to $w$ in $H$ with distinct edges $e_i$, i.e. $H$ is  \emph{connected},
\item and this walk is unique, i.e. $H$ has \emph{no cycles}. 
 \end{itemize}

The pair $H=(V,E)$ is called \emph{hypertree on $V$}. If $V$ is the set $ \{ 1, \ldots, n \}$, then $H$ is called an \emph{hypertree on $n$ vertices}.
\end{defi}

Denote the hypertree species by $\mathcal{H}$.

\begin{exple}
An example of hypertree on $\{1,2,3,4\}$: 

\begin{center}
\begin{tikzpicture}[scale=1]
\draw[black, fill=gray!40] (0,0) -- (1,1) -- (0,1) -- (0,0);
\draw(1,0) -- (1,1);
\draw[black, fill=white] (0,0) circle (0.2);
\draw[black, fill=white] (0,1) circle (0.2);
\draw[black, fill=white] (1,1) circle (0.2);
\draw[black, fill=white] (1,0) circle (0.2);
\draw(0,0) node{$4$};
\draw(0,1) node{$1$};
\draw(1,1) node{$2$};
\draw(1,0) node{$3$};
\end{tikzpicture}.
\end{center}

\end{exple}

We have the following proposition:
\begin{prop} \label{min walk}
Given a hypertree $H$, a vertex or an edge $d$ of $H$ and a vertex $f$ of $H$ , there is a unique minimal walk from $d$ to $f$ and this walk have distinct edges.
\end{prop}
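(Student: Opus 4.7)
The proof naturally splits into two cases according to whether $d$ is a vertex or an edge. In both cases the central tool is a \emph{shortcut operation}: whenever a walk from $d$ to $f$ uses the same edge $e$ at two different positions --- say preceded by vertex $v$ the first time and followed by vertex $v'$ the second time --- the two vertices $v, v'$ lie in $e$, and replacing the entire segment between (and including) these two occurrences by the single triple $(v, e, v')$ yields a strictly shorter walk. Iterating this operation removes all edge repetitions, so every walk of minimal length must have distinct edges.

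Suppose first that $d$ is a vertex. The connectedness axiom provides at least one walk from $d$ to $f$, and since walk lengths are non-negative integers a walk of minimal length exists. By the shortcut principle such a minimal walk has distinct edges, and the hypertree axiom asserts that a walk with distinct edges between two vertices is unique, so both the existence and the uniqueness of a minimal walk follow.

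Now suppose $d$ is an edge. Any walk from $d$ to $f$ has the form $(d, v, W_v)$ with $v \in d$ and $W_v$ a walk from $v$ to $f$, so the optimization reduces to a joint choice of $v$ and $W_v$. For each fixed $v$, the vertex case yields a unique shortest walk $W(v \to f)$ from $v$ to $f$, which has distinct edges; the remaining task is to single out the unique $v^\ast \in d$ minimizing $|W(v \to f)|$. The key claim is that there is exactly one $v^\ast \in d$ such that $d$ does not appear as an edge in $W(v^\ast \to f)$. For existence, starting from any $v \in d$, the suffix of $W(v \to f)$ following the last occurrence of $d$ (if any) is itself a walk with distinct edges from some vertex in $d$ to $f$ that avoids $d$. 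For uniqueness, two distinct vertices $v, v' \in d$ both avoiding $d$ in their walk to $f$ would produce two different walks with distinct edges from $v$ to $f$, namely $W(v \to f)$ and the walk obtained by prepending $(v, d)$ to $W(v' \to f)$, contradicting the hypertree axiom. Finally, for $v \in d$ with $v \neq v^\ast$, the walk $W(v \to f)$ contains $d$, and the suffix of $W(v \to f)$ following $d$ must be $W(v^\ast \to f)$ by the same uniqueness argument, so $|W(v \to f)| > |W(v^\ast \to f)|$. Thus $v^\ast$ is the unique minimizer and the minimal walk from $d$ to $f$ is $(d, v^\ast, W(v^\ast \to f))$, which indeed has distinct edges.

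The main obstacle is the edge case: the shortcut argument alone does not pin down the minimal walk, and a further appeal to the uniqueness clause of the hypertree axiom is required to single out the vertex $v^\ast \in d$ whose walk to $f$ avoids $d$ itself. The vertex case is essentially a direct unpacking of the definitions.
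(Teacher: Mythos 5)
Your proof is correct and follows essentially the same route as the paper: the vertex case is settled by the shortcut argument together with the uniqueness of distinct-edge walks, and the edge case is reduced to the vertex case by adding or removing one step at the pointed edge $d$, again invoking that uniqueness. The only difference is bookkeeping: you single out the vertex $v^\ast \in d$ whose minimal walk to $f$ avoids $d$ and prepend $(d, v^\ast)$, whereas the paper picks a vertex $v' \in d$ for which $d$ is the first edge of its minimal walk and deletes $v'$ --- dual descriptions of the same reduction.
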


\begin{proof}
If $d$ is a vertex, there exists a unique walk to $f$ with distinct edges as $H$ is a hypertree. Let us consider another walk $(d=v_0, e_1, v_1, \ldots, e_k, v_k=f)$ with $e_i=e_j$ for some $i<j$. Then $(d=v_0, \ldots, e_i, v_j, \ldots, v_k=f)$, obtained by deleting $(v_i, \ldots, e_j)$ in the walk, is a shorter walk. Then, a minimal walk have distinct edges and is thus unique.

If $d$ is an edge, we consider $(v,v')$ a pair of vertex in $d$. If $d$ is not on the unique minimal walk $(v=v_0, e_1, v_1, \ldots, v_n=f)$ from $v$ to $f$, then $(v',d,v_0,e_1, \ldots, v_n=f)$ is a walk from $v'$ to $f$ with distinct edges so it is the unique minimal walk from $v'$ to $f$. Otherwise, let us exchange $v$ and $v'$ so that the edge $d$ is the first edge on the unique minimal walk from $v'$ to $f$. This walk give a walk $w$ from $d$ to $f$ by deleting the vertex $v'$. Suppose that there is another walk different from $w$ from $d$ to $f$ of length less or equal to the length of $w$. By adding $v'$ at the beginning of the walk, this give a walk from $v'$ to $f$ of length less or equal to the length of the unique minimal walk from $v'$ to $f$, and different from it: this is not possible. Thus, there is a unique minimal walk from $d$ to $f$ and this walk have distinct edges.
\end{proof}
	
		\subsubsection{The hypertree poset on $n$ vertices}

Let $I$ be a finite set of cardinality $n$, $S$ and $T$ be two hypertrees on $I$. We say that $S \preceq T$ if each edge of $S$ is the union of edges of $T$, and that $S \prec T$ if $S  \preceq T$ but $S \neq T$.

\begin{exple}
Example in the hypertree poset on four vertices on $I=(\diamondsuit,\heartsuit,\clubsuit,\spadesuit)$: 

\begin{figure}[!h]
  \begin{center}  
  
  \begin{tikzpicture}[scale=1]
\draw[black, fill=gray!40] (0,0) -- (1,1) -- (0,1) -- (0,0);
\draw(1,0) -- (1,1);
\draw[black, fill=white] (0,0) circle (0.2);
\draw[black, fill=white] (0,1) circle (0.2);
\draw[black, fill=white] (1,1) circle (0.2);
\draw[black, fill=white] (1,0) circle (0.2);
\draw(0,0) node{$\spadesuit$};
\draw(0,1) node{$\diamondsuit$};
\draw(1,1) node{$\heartsuit$};
\draw(1,0) node{$\clubsuit$};
\end{tikzpicture}

  $\preceq$
  
  \begin{tikzpicture}[scale=1]
\draw(0,0) -- (1,1);
\draw(0,1) -- (1,1);
\draw(1,0) -- (1,1);
\draw[black, fill=white] (0,0) circle (0.2);
\draw[black, fill=white] (0,1) circle (0.2);
\draw[black, fill=white] (1,1) circle (0.2);
\draw[black, fill=white] (1,0) circle (0.2);
\draw(0,0) node{$\spadesuit$};
\draw(0,1) node{$\diamondsuit$};
\draw(1,1) node{$\heartsuit$};
\draw(1,0) node{$\clubsuit$};

\draw(2,0) -- (2,1);
\draw(2,0) -- (3,1);
\draw(3,0) -- (3,1);
\draw[black, fill=white] (2,0) circle (0.2);
\draw[black, fill=white] (2,1) circle (0.2);
\draw[black, fill=white] (3,1) circle (0.2);
\draw[black, fill=white] (3,0) circle (0.2);
\draw(2,0) node{$\spadesuit$};
\draw(2,1) node{$\diamondsuit$};
\draw(3,1) node{$\heartsuit$};
\draw(3,0) node{$\clubsuit$};

\draw(4,0) -- (4,1);
\draw(4,1) -- (5,1);
\draw(5,0) -- (5,1);
\draw[black, fill=white] (4,0) circle (0.2);
\draw[black, fill=white] (4,1) circle (0.2);
\draw[black, fill=white] (5,1) circle (0.2);
\draw[black, fill=white] (5,0) circle (0.2);
\draw(4,0) node{$\spadesuit$};
\draw(4,1) node{$\diamondsuit$};
\draw(5,1) node{$\heartsuit$};
\draw(5,0) node{$\clubsuit$};
\end{tikzpicture}.

  \end{center}
\end{figure} 
\end{exple}

The set $(\mathcal{H}(I),\preceq)$ is a partially ordered set (or poset), written $\operatorname{HT(I)}$. We denote by $\widehat{\operatorname{HT(I)}}$ the poset obtained by adding to $\operatorname{HT(I)}$ a formal element $\hat{1}$ above all the other elements of the poset. We moreover write $\operatorname{HT_n}$ for the poset $\operatorname{HT(\{1, \ldots, n\})}$.

\begin{defi} Given a relation $\preceq$, the \emph{cover relation} $\vartriangleleft$ is defined by $x \vartriangleleft y$ ($y$ covers $x$ or $x$ is covered by $y$) if and only if $x \prec y$ and there is no $z$ such that $x \prec z \prec y$ .
\end{defi}

In $\operatorname{HT(I)}$, we define the \emph{rank} $r(h)$ of a hypertree $h$ with $A$ edges by: 
\begin{equation*}
r(h)=A-1.
\end{equation*}

Each cover relation increases the rank by one, so the poset $\operatorname{HT(I)}$ is graded by the number of edges in hypertrees.

	\subsection{Chain complex and homology associated to a poset}

We now define the homology associated to a poset $\mathcal{P}$ with a minimum and a maximum. The reader may read Wachs' article \cite{Wachs} for a deeper treatment of this subject and Munkres' book \cite{M} for more details on simplicial homology. We introduce the following terminology: 

\begin{defi}
A \emph{strict $m$-chain} is an $m$-tuple $(a_1,\ldots, a_m)$ where $a_i$ are elements of $\mathcal{P}$, neither maximum nor minimum in $\mathcal{P}$,  and $a_i \prec a_{i+1}$, for all $i\geq 1$. We write  $\mathcal{C}_m(\mathcal{P})$ for the set of strict $m+1$-chains and $C_m(\mathcal{P})$ for the vector space generated by all strict $m+1$-chains.
\end{defi}

The set $\cup_{m \geq 0}{\mathcal{C}_{m}(\mathcal{P})}$ is then a simplicial complex. 

Define the linear map $d_m:C_{m+1}(\mathcal{P}) \rightarrow C_{m}(\mathcal{P})$ which maps a $m+1$-simplex to its boundary. These maps satisfy $d_{m-1} \circ d_m=0$. The pairs $(C_m(\mathcal{P}),d_m)_{m>0}$ obtained form a \emph{chain complex}. Thus, we can define the homology of the poset.

\begin{defi}The homology group of dimension $m$ of the poset $\mathcal{P}$ is: 
\begin{equation*}
H_m(\mathcal{P})=\operatorname{ker} d_m /\operatorname{Im} d_{m+1}.
\end{equation*}
\end{defi}

We consider in this article the reduced homology, written $\tilde{H}_i$. Having $C_{-1}(\mathcal{P})=\mathbb{C} . e$, and $d: C_0 \rightarrow C_{-1}$, the trivial linear map which maps every singleton to the element $e$, we obtain: 
\begin{equation*} \operatorname{dim} (\tilde{H_0}(\mathcal{P}))=\operatorname{dim} (H_0(\mathcal{P})-1).
\end{equation*}

Dimensions of the homology spaces satisfy the following well-known property: 

\begin{lem} The Euler characteristic of the homology satisfies: 
\begin{equation}\label{CarEuler1}
\chi =\sum_{m \geq 0} (-1)^{m} \operatorname{dim} \tilde{H}_m(\mathcal{P}) = \sum_{m \geq -1} (-1)^{m} \operatorname{dim} C_m(\mathcal{P}).
\end{equation}
\end{lem}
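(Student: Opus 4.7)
The plan is to carry out the standard Euler--Poincar\'e telescoping argument on the augmented chain complex introduced just above the lemma. The input is a length--finite complex (since $\mathcal{P}$ has finite rank, $C_m(\mathcal{P})=0$ as soon as $m$ exceeds the length of a maximal chain), so all sums are actually finite and no convergence issue arises.

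First, I would apply the rank--nullity theorem to each boundary map $d_m : C_m(\mathcal{P}) \to C_{m-1}(\mathcal{P})$ to get
\begin{equation*}
\dim C_m(\mathcal{P}) \;=\; \dim \ker d_m \,+\, \dim \operatorname{Im} d_m .
\end{equation*}
Then the definition of the homology groups, together with the inclusion $\operatorname{Im} d_{m+1} \subseteq \ker d_m$, yields
\begin{equation*}
\dim \ker d_m \;=\; \dim H_m(\mathcal{P}) \,+\, \dim \operatorname{Im} d_{m+1} .
\end{equation*}
Substituting the second relation into the first and multiplying by $(-1)^m$ gives
\begin{equation*}
(-1)^m \dim C_m(\mathcal{P}) \;=\; (-1)^m \dim H_m(\mathcal{P}) \,+\, (-1)^m \dim \operatorname{Im} d_{m+1} \,+\, (-1)^m \dim \operatorname{Im} d_m .
\end{equation*}
Summing over $m \geq -1$, the two rightmost families of terms telescope: the contribution $(-1)^m \dim \operatorname{Im} d_{m+1}$ at index $m$ cancels with $(-1)^{m+1} \dim \operatorname{Im} d_{m+1}$ at index $m+1$, and both boundary contributions vanish because $C_m = 0$ for $m$ sufficiently large and $d_0$ lands in $C_{-1} = \mathbb{C}\cdot e$ (where the ``homology'' term is absorbed by the next step).

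It remains to match the $H_m$ contributions with the $\tilde H_m$ contributions in the statement. In positive degree there is nothing to do since reduced and unreduced homology coincide. In degree zero, the relation $\dim \tilde H_0(\mathcal{P}) = \dim H_0(\mathcal{P}) - 1$ recalled just before the lemma, combined with the $m=-1$ term $(-1)^{-1} \dim C_{-1}(\mathcal{P}) = -1$, produces exactly the missing unit needed to convert $\dim H_0$ into $\dim \tilde H_0$. This delivers the identity claimed.

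I expect no serious obstacle: the argument is entirely formal bookkeeping on a finite-dimensional complex. The only mild care needed is the treatment of the augmentation in degree $-1$, which is what distinguishes reduced from unreduced Euler characteristic; once this accounting is done correctly, the telescoping is automatic.
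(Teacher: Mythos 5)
Your proof is correct: the rank--nullity decomposition plus telescoping, with the degree $-1$ augmentation term converting $\dim H_0(\mathcal{P})$ into $\dim \tilde H_0(\mathcal{P})$, is exactly the standard Euler--Poincar\'e argument; the paper states this lemma as a well-known property and gives no proof at all, so your write-up simply supplies the omitted standard justification. The only implicit hypothesis is that the proper part of $\mathcal{P}$ is nonempty, so that the augmentation $d\colon C_0 \to C_{-1}$ is surjective and the relation $\dim \tilde H_0(\mathcal{P}) = \dim H_0(\mathcal{P}) - 1$ recalled just before the lemma applies, which is the convention the paper already uses.
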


	\subsection{Homology of the $\widehat{\operatorname{HT_n}}$ poset}

Let us apply the previous subsection to the poset $\widehat{\operatorname{HT_n}}$. The vector spaces $ C_m(\mathcal{P})$ and $\tilde{H}_m(\mathcal{P})$ are denoted by $C_m^n$ and $\tilde{H}_m^n$.The reader may consult Sundaram's article \cite {Sundaram} for general points on the notion of Cohen-Macaulay poset. The following notion is needed: 

\begin{defi} Let $\mathcal{P}$ be a poset and $\sigma$ be a closed simplex of the geometric realization  $|\mathcal{P}|$ of $\mathcal{P}$ . The \emph{link} of $\sigma$ is the subcomplex: 
\begin{equation*}
Lk(\sigma)=\{\lambda \in |\mathcal{P}|: \lambda \cup \sigma \in |\mathcal{P}|, \lambda \cap \sigma = \emptyset\}.
\end{equation*}
\end{defi}

\begin{defi}\cite[definition 2.8]{McCM} A poset $\mathcal{P}$ is \emph{Cohen-Macaulay} if its geometric realization $|\mathcal{P}|$ is Cohen-Macaulay. That is, for every closed simplex $\sigma$ in $|\mathcal{P}|$, we have: 

\begin{equation*}
\tilde{H}_i(Lk(\sigma))=\left\lbrace
\begin{array}{ll}
0, & \text{for }  i \neq \operatorname{dim} (|\mathcal{P}|)-\operatorname{dim}(\sigma)-1\\
\text{torsion free}, & \text{for } i = \operatorname{dim}(|\mathcal{P}|)-\operatorname{dim}(\sigma)-1,
\end{array}\right.
\end{equation*}

where the dimension of the empty simplex is $-1$ by convention.
\end{defi}

\begin{thm} \cite[theorem 2.9]{McCM} \label{Cohen-Macaulay}
For each $n \geq 1$, the poset $\widehat{\operatorname{HT_n}}$ is Cohen-Macaulay.
\end{thm}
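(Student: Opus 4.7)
The plan is to prove the stronger statement that $\widehat{\operatorname{HT_n}}$ is (EL-)shellable, which is well known to imply Cohen-Macaulayness of the order complex. I would proceed by induction on $n$, the cases $n=1,2$ being trivial.

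First, I would pin down the cover relations: $S \vartriangleleft T$ holds precisely when $T$ is obtained from $S$ by splitting a single edge $e$ of $S$ into two edges $e_1, e_2$ with $e_1 \cup e_2 = e$ and $e_1 \cap e_2 = \{v\}$ for a unique articulation vertex $v$ (the hypertree axioms force this shape, as any other split would either disconnect or create a cycle). Second, I would exploit a product decomposition of intervals: a non-trivial closed interval $[S,T] \subset \widehat{\operatorname{HT_n}}$ is canonically isomorphic to the Cartesian product $\prod_{e \in E(S)} [\hat{0}_e, T|_e]$, where $T|_e$ denotes the hypertree on the vertex set $e$ cut out by those edges of $T$ contained in $e$. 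In particular, every upper interval $[T, \hat{1}]$ factors as a product of $\widehat{\operatorname{HT}(e)}$'s with strictly fewer vertices, so the induction hypothesis immediately delivers Cohen-Macaulayness (and shellability) of all proper upper intervals.

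Third, I would construct an EL-labeling $\lambda$ of the Hasse diagram. Using the natural ordering on $\{1, \ldots, n\}$, for a cover $S \vartriangleleft T$ that splits $e$ into $\{e_1, e_2\}$ about articulation $v$, I label the Hasse edge by the pair $(v, m)$, where $m$ is the minimum element of the part not containing $\min(e)$, using the lexicographic order on labels. Verifying that $\lambda$ is an EL-labeling amounts to checking, inside every interval $[X,Y]$, that there is a unique strictly increasing maximal chain and that this chain is lex-smallest among all maximal chains; via the product decomposition this reduces to the case of intervals of the form $[\hat{0}, T]$ in smaller hypertree posets.

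The main obstacle is therefore the EL-property on these ``root'' intervals $[\hat{0}, T]$, which unlike upper intervals do not split further as direct products. Here the natural greedy recipe is to split off, at each step, the articulation vertex of smallest label available; one must verify that this produces the unique strictly increasing maximal chain of $[\hat{0},T]$ and that it is the lexicographic minimum among all maximal chains of the interval. This is a careful combinatorial argument tracking how the ``edge tree'' of $T$ is assembled from $\hat{0}$, but it lends itself to a secondary induction on the number of edges of $T$, after which the theorem follows from the implication shellable $\Rightarrow$ Cohen-Macaulay.
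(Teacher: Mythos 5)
First, a point of reference: the paper does not prove this statement at all --- Theorem \ref{Cohen-Macaulay} is simply imported from \cite{McCM} (building on \cite{BMcCMM}), so your sketch is not paralleling any argument in the text; it has to stand on its own. As such it has a genuine structural gap. Your description of the cover relations and the decomposition $[S,T]\cong\prod_{e\in E(S)}[\hat{0}_e,T|_e]$ are correct when the top $T$ of the interval is an honest hypertree, but the poset being shelled is $\widehat{\operatorname{HT_n}}$, whose maximum $\hat{1}$ is a \emph{formal} element sitting above the many maximal hypertrees (the ordinary trees). Your labeling is never defined on the cover relations $T\vartriangleleft\hat{1}$, and intervals of the form $[X,\hat{1}]$ are never examined; yet these are precisely the intervals that do \emph{not} reduce to root intervals $[\hat{0},T]$, and they are where all the interesting topology lives (the open interval $(\hat{0},\hat{1})$ carries the homology of rank $(n-1)^{n-2}$, so the EL-condition there is the entire point, not a formality). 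Relatedly, the assertion that an upper interval $[T,\hat{1}]$ ``factors as a product of $\widehat{\operatorname{HT}}(e)$'s'' is false as stated: the set of elements above $T$ is indeed $\prod_{e}\operatorname{HT}(e)$, but adjoining a single formal top to that product is not the poset $\prod_{e}\widehat{\operatorname{HT}}(e)$, which contains mixed elements having some coordinates equal to their local tops. So the induction hypothesis does not ``immediately deliver'' Cohen--Macaulayness of proper upper intervals; you would need an extra lemma (a shellability or recursive-atom-ordering statement for a product of bounded-below posets with one top adjoined, or an explicit labeling of the covers into $\hat{1}$ verified on $[X,\hat{1}]$).

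Second, even on the root intervals $[\hat{0},T]$ the heart of the matter --- that your label $(v,m)$ yields in every such interval a unique strictly increasing maximal chain which is lexicographically first --- is deferred to ``a careful combinatorial argument.'' That may be fixable, but combined with the untreated covers into $\hat{1}$ it means shellability of $\widehat{\operatorname{HT_n}}$ has not actually been established anywhere in the proposal; what you have verified (cover relations, interval factorization below a hypertree) is the easy local structure, while the step carrying the theorem's content is missing. Either complete the labeling on all of $\widehat{\operatorname{HT_n}}$, including the Hasse edges into $\hat{1}$, and check the EL-condition on the intervals $[X,\hat{1}]$, or do as the paper does and cite \cite{McCM}.
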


\begin{cor}
The homology of $\widehat{\operatorname{HT_n}}$ is concentrated in maximal degree: 
\begin{equation*}
\tilde{H}_i(Lk(\emptyset))=\left\lbrace
\begin{array}{ll}
0, & \text{for } i \neq \operatorname{dim}(|\mathcal{\widehat{\operatorname{HT_n}}}|)\\
\text{torsion free}, & \text{for } i = \operatorname{dim} (|\mathcal{\widehat{\operatorname{HT_n}}}|).
\end{array}\right.
\end{equation*}
\end{cor}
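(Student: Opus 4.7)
The plan is to obtain the corollary as a direct specialization of the Cohen-Macaulay definition to the empty simplex $\sigma=\emptyset$ of the geometric realization of $\widehat{\operatorname{HT_n}}$. By Theorem \ref{Cohen-Macaulay}, the poset $\widehat{\operatorname{HT_n}}$ is Cohen-Macaulay, so the definition guarantees vanishing of $\tilde{H}_i(Lk(\sigma))$ away from the critical index $\dim(|\widehat{\operatorname{HT_n}}|)-\dim(\sigma)-1$ and torsion-freeness at that index. The strategy is just to feed $\sigma=\emptyset$ into this.

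First I would record the two pieces of bookkeeping that make the substitution work. One, the empty simplex has dimension $-1$ by the convention stated in the definition, so the critical index becomes
\begin{equation*}
\dim(|\widehat{\operatorname{HT_n}}|)-(-1)-1=\dim(|\widehat{\operatorname{HT_n}}|),
\end{equation*}
which is precisely the top degree appearing in the statement. Two, from the definition of the link,
\begin{equation*}
Lk(\emptyset)=\{\lambda\in|\widehat{\operatorname{HT_n}}|:\lambda\cup\emptyset\in|\widehat{\operatorname{HT_n}}|,\ \lambda\cap\emptyset=\emptyset\}=|\widehat{\operatorname{HT_n}}|,
\end{equation*}
since both conditions are trivially satisfied by every $\lambda$. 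Hence $\tilde{H}_i(Lk(\emptyset))=\tilde{H}_i(|\widehat{\operatorname{HT_n}}|)$, which is the (reduced) poset homology we care about.

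Combining these two observations with Theorem \ref{Cohen-Macaulay} yields exactly the dichotomy in the statement, finishing the proof. There is essentially no obstacle here: the result is a mechanical unpacking of the Cohen-Macaulay definition, and the only subtlety worth flagging is the convention $\dim(\emptyset)=-1$, which is what makes the Cohen-Macaulay condition collapse, at $\sigma=\emptyset$, to concentration of the homology of $|\widehat{\operatorname{HT_n}}|$ in its top dimension.
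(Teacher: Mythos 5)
Your proof is correct and matches the paper's (implicit) argument: the corollary is exactly the Cohen-Macaulay condition of Theorem \ref{Cohen-Macaulay} specialized to $\sigma=\emptyset$, using $Lk(\emptyset)=|\widehat{\operatorname{HT_n}}|$ and the convention $\operatorname{dim}(\emptyset)=-1$. Nothing further is needed.
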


The equation \eqref{CarEuler1} can thus be rewritten as: 

\begin{equation}
\operatorname{dim} \tilde{H}_{n}^n=\sum_{m \geq -1} (-1)^{m} \operatorname{dim} C_m^n.
\end{equation}

Moreover, as the differential is compatible with the symmetric group action, the action of the symmetric group on $(C_m)_{m \geq -1}$ induces an action on $\tilde{H}_{n}^n$. Hence the following relation holds, with $\chi_{i+1}^s$ the character of the action of the symmetric group on the vector space $C_i^n$ and $\chi_{\tilde{H}_{n}^n}$ the character of the action of the symmetric group on the vector space $\tilde{H}_{n}^n$: 

\begin{equation}\label{CarEuler}
\chi_{\tilde{H}_{n}^n} =\sum_{m \geq -1} (-1)^{m} \chi_{i+1}^s.
\end{equation}

	\subsection{From large to strict chains} 
	
	\label{chLarg}

According to equation \eqref{CarEuler}, it is sufficient to compute the alternating sum of characters on $C_m^n$ to determine the character on the only non trivial homology group.

Let $k$ be a natural number and $I$ be a finite set. The set of \emph{large $k$-chains} of hypertrees on $I$ is the set $\operatorname{HL^I_k}$ of $k$-tuples $(a_1,\ldots, a_k)$ where $a_i$ are elements of ${\operatorname{HT(I)}}$ and $a_i \preceq a_{i+1}$. The set of \emph{strict $k$-chains} of hypertrees on  $I$ is the set $\operatorname{HS}^I_k$ of $k$-tuples $(a_1,\ldots, a_k)$ where $a_i$ are non minimum elements of ${\operatorname{HT(I)}}$ and $a_i \prec a_{i+1}$. The set $\operatorname{HS}^{\{1, \ldots, n\}}_k$ is then a basis of the vector space $C^n_{k+1}$.

We define the following species: 

\begin{defi}
The species $\mathcal{H}_k$ of large $k$-chains of hypertrees is defined by: 
\begin{equation*}
I \mapsto \operatorname{HL}^{ I}_k.
\end{equation*}

The species $\mathcal{HS}_k$ of strict $k$-chains of hypertrees is the species defined by: 
\begin{equation*}
I \mapsto \operatorname{HS}^{ I}_k.
\end{equation*}
\end{defi}
 
\begin{defi}
Let ${M}_{k,s}$ be the set of words on $\{0,1 \}$ of length $k$, containing $s$ letters "$1$". The species $\mathcal{M}_{k,s}$ is defined by:

\begin{equation*} 
\left\lbrace
\begin{array}{rcl}
\emptyset & \mapsto & {M}_{k,s}, \\
V \neq \emptyset & \mapsto & \emptyset .
\end{array} 
\right.
\end{equation*} 

\end{defi}

Let us describe the link between these species: 

\begin{prop} \label{prop prec}
The species $\mathcal{H}_k$ and $\mathcal{HS}_i$ are related by: 
\begin{equation*}
\mathcal{H}_k \cong \sum_{i \geq 0} \mathcal{HS}_i \times \mathcal{M}_{k,i}.
\end{equation*}
\end{prop}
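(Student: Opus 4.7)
The plan is to construct, for each finite set $I$, a natural $\mathfrak{S}_I$-equivariant bijection
\[
\operatorname{HL}^I_k \;\xrightarrow{\ \sim\ }\; \bigsqcup_{i \geq 0} \operatorname{HS}^I_i \times M_{k,i},
\]
whose naturality in $I$ will yield the claimed isomorphism of species. The guiding observation is that a large chain differs from a strict one in exactly two ways: it may repeat consecutive values, and it may start with (a prefix equal to) the minimum $\hat{0}$ of $\operatorname{HT}(I)$, namely the hypertree whose unique edge is $I$. Both kinds of data can be simultaneously recorded in a word of $0$'s and $1$'s.

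First I would fix a large chain $\mathbf{a} = (a_1 \preceq \cdots \preceq a_k)$ and associate to it a word $w(\mathbf{a}) \in \{0,1\}^k$ marking the positions where a ``genuinely new'' non-minimum value appears: set $w_j = 1$ exactly when either $j = 1$ and $a_1 \neq \hat{0}$, or $j \geq 2$ and $a_j \neq a_{j-1}$. Letting the $1$'s occur at positions $p_1 < \cdots < p_i$, define the extracted strict chain to be $\mathbf{b}(\mathbf{a}) := (a_{p_1}, \ldots, a_{p_i})$. A short inspection of what $w_{p_1}=1$ forces shows that $a_{p_1} \neq \hat{0}$, and by construction $a_{p_1} \prec \cdots \prec a_{p_i}$, so $\mathbf{b}(\mathbf{a}) \in \operatorname{HS}^I_i$ and $w(\mathbf{a}) \in M_{k,i}$.

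Next I would write down the inverse. Given $\mathbf{b} = (b_1 \prec \cdots \prec b_i) \in \operatorname{HS}^I_i$ and a word $w \in M_{k,i}$ with $1$'s at positions $p_1 < \cdots < p_i$, reconstruct a large chain by setting $a_j := \hat{0}$ for $1 \leq j < p_1$ and $a_j := b_m$ whenever $p_m \leq j < p_{m+1}$ (with the convention $p_{i+1} := k+1$). Verifying that the two constructions are mutually inverse is immediate from their definitions.

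Finally, since $\hat{0}$ is fixed by $\mathfrak{S}_I$ and $w(\mathbf{a})$ depends only on the pattern of coincidences $a_j = a_{j-1}$ and $a_1 = \hat{0}$, one obtains $w(\sigma \cdot \mathbf{a}) = w(\mathbf{a})$ and $\mathbf{b}(\sigma \cdot \mathbf{a}) = \sigma \cdot \mathbf{b}(\mathbf{a})$ for every $\sigma \in \mathfrak{S}_I$. Because $\mathcal{M}_{k,i}$ is concentrated at the empty set, its species product with $\mathcal{HS}_i$ simply adjoins $M_{k,i}$ as a trivial $\mathfrak{S}_I$-set factor to $\mathcal{HS}_i(I)$, and summing the equivariant bijection over $i$ gives the proposition. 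The only point requiring genuine care, and in this sense the main subtlety, is the handling of the possible leading prefix of $\hat{0}$'s in $\mathbf{a}$: this is precisely what the initial run of $0$'s in $w$ before its first $1$ is designed to encode, and is why the definition of $w_1$ treats the first position asymmetrically from the others.
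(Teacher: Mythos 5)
Your proof is correct and follows essentially the same route as the paper: factor a large chain into the strict chain obtained by deleting repetitions and the minimum $\hat{0}$, together with a $\{0,1\}$-word of $M_{k,i}$ recording where new non-minimum values appear, then check that the two constructions are mutually inverse and $\mathfrak{S}_I$-equivariant. Your uniform convention (a $1$ at each position contributing a new non-minimum element, so the number of $1$'s equals the length of the strict chain) is in fact the consistent version of the paper's encoding, whose second bullet has $0$ and $1$ interchanged.
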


\begin{proof}
Let $(a_1,\ldots, a_k)$ be a large $k$-chain. It can be factorized into an ordered pair formed by a strict $s$-chain $(a_{i_1}, \ldots, a_{i_s})$, obtained by deleting repetitions and minimum $\hat{0}$, if it is possible, and an element $u_1 \ldots u_k$ of ${M}_{k,s}$ such that:
\begin{itemize} 
\item $u_1=0$ if $a_1=\hat{0}$, $1$ otherwise;
\item $u_j=0$ if $a_j \neq a_{j-1}$, $1$ otherwise.
\end{itemize}

From a strict $i$-chain and a word $u_1 \ldots u_k$ of ${M}_{k,i}$, a large $k$-chain can be reconstructed.

This establishes the desired species isomorphism.
\end{proof}

\begin{cor}
Consider the action by permutation of $\mathfrak{S}_n$ on $\{1, \ldots, n \}$. The characters $\chi_k$ and $\chi^s_i$ of the induced action on the vector spaces $\mathcal{H}_k (\{1, \ldots, n \})$ and  $\mathcal{HS}_i (\{1, \ldots, n \})$ satisfy: 
\begin{equation}
\chi_k= \sum_{i = 0}^{n-2} \binom{k}{i} \chi^s_i.
\end{equation}
\end{cor}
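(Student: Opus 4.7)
The plan is to deduce this character identity as a direct consequence of the species isomorphism of Proposition \ref{prop prec}, by evaluating both sides at the set $\{1,\ldots,n\}$ and tracking the $\mathfrak{S}_n$-action. The key observation is that the species $\mathcal{M}_{k,i}$ is concentrated on the empty set, so the standard formula for the product of species $(\mathcal{F}\times\mathcal{G})(I)=\coprod_{I=A\sqcup B}\mathcal{F}(A)\times\mathcal{G}(B)$ collapses on $I=\{1,\ldots,n\}$ to
\begin{equation*}
(\mathcal{HS}_i\times\mathcal{M}_{k,i})(\{1,\ldots,n\})\;\cong\;\mathcal{HS}_i(\{1,\ldots,n\})\times M_{k,i},
\end{equation*}
with $\mathfrak{S}_n$ acting on the first factor and trivially on the second (since the symmetric group has nothing to permute on the empty set).

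Next I would take characters on both sides of the isomorphism of Proposition \ref{prop prec}, using additivity of characters under the species sum on the right-hand side, and the fact that the character of a representation of the form $V\otimes W$ with trivial action on $W$ is $\chi_V\cdot\dim W$. This gives
\begin{equation*}
\chi_k=\sum_{i\geq 0}|M_{k,i}|\cdot\chi^s_i=\sum_{i\geq 0}\binom{k}{i}\chi^s_i,
\end{equation*}
since $|M_{k,i}|=\binom{k}{i}$ by choosing which $i$ of the $k$ positions carry the letter $1$.

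Finally, I would justify truncating the sum at $i=n-2$ by invoking the rank function of $\operatorname{HT_n}$: since $r$ takes values in $\{0,1,\ldots,n-2\}$ and is strictly monotone under $\prec$, a strict chain of non-minimum elements, all of rank at least $1$, has length at most $n-2$. Hence $\mathcal{HS}_i(\{1,\ldots,n\})$ is empty and $\chi^s_i=0$ for $i>n-2$, yielding the stated finite sum (note that for $i>k$ the coefficient $\binom{k}{i}$ already vanishes, so the bound $n-2$ is the only nontrivial truncation). The argument is essentially formal once Proposition \ref{prop prec} is granted; the only step meriting care is the verification that $\mathfrak{S}_n$ acts trivially on the $M_{k,i}$ factor, and this is immediate from the support of $\mathcal{M}_{k,i}$.
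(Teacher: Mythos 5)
Your proposal is correct and follows essentially the same route as the paper: evaluate the species isomorphism of Proposition \ref{prop prec} at $\{1,\ldots,n\}$, use that $\mathcal{M}_{k,i}$ is supported on the empty set so $\mathfrak{S}_n$ acts trivially on that factor, multiply $\chi^s_i$ by $\#M_{k,i}=\binom{k}{i}$, and truncate at $i=n-2$ via the maximal length of a strict chain in $\operatorname{HT_n}$. Your treatment is only slightly more explicit than the paper's (spelling out the collapse of the species product and the rank argument for the truncation), but it is the same argument.
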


\begin{proof}
The isomorphism of proposition \ref{prop prec} is a species isomorphism, so it preserves the symmetric group action.

This gives: 
\begin{equation*}
\mathcal{H}_k(\{1, \ldots, n \}) \cong \sum_{i} \mathcal{HS}_i(\{1, \ldots, n \}) \times \mathcal{M}_{k,i} (\emptyset).
\end{equation*}

Moreover, the action of $\mathfrak{S}_n$ on $\mathcal{M}_{k,i} (\emptyset)$ is trivial, so that we obtain:  
\begin{equation*}
\chi_k= \sum_{i \geq 0} \chi^s_i \times \# {M}_{k,i}.
\end{equation*}

The cardinality of ${M}_{k,i}$ is $\binom{k}{i}$. As the maximal length of a strict chain in $\operatorname{HT_n}$ is $n-2$, the sum is finite.

\end{proof}

As the expression of $\sum_{i = 0}^{n-2} \binom{k}{i} \chi^s_i$ is polynomial in $k$, of degree bounded by $n$, it enables us to extend $\chi_k$ to integers. Equation \eqref{CarEuler} shows that the character $\chi_k$ evaluated at $k=-1$ is the opposite of the character given by the action of $\mathfrak{S}_n$ induced on poset homology. 

\begin{prop}\label{mu}
Let us write $P_n(X)$ for the polynomial whose value in $k$ gives the number of large $k$-chains in the poset $\widehat{HT_n}$.
The opposite of the character given by the action of $\mathfrak{S}_n$ induced on the homology of poset $\widehat{HT_n}$ is given by $P_n(-1)$. 
\end{prop}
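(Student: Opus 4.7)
The idea is to upgrade the previous corollary to an identity of virtual characters that is polynomial in $k$, and then specialise $k=-1$ using $\binom{-1}{i}=(-1)^i$, at which point the right-hand side becomes (up to sign) the alternating sum that appears in the Euler characteristic formula \eqref{CarEuler}.

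First, the corollary gives
\begin{equation*}
\chi_k = \sum_{i=0}^{n-2} \binom{k}{i}\, \chi^s_i
\end{equation*}
for every non-negative integer $k$. The right-hand side is manifestly a polynomial in $k$ of degree at most $n-2$, with coefficients in the representation ring of $\mathfrak{S}_n$; evaluating at the identity element yields a polynomial identity of integers whose value at $k$ is precisely the number of large $k$-chains. Hence the polynomial $P_n(X)$ of the statement is exactly the dimension-level shadow of the character-valued polynomial $\chi_X := \sum_{i=0}^{n-2} \binom{X}{i}\, \chi^s_i$, and $P_n(-1)$ is to be read as the specialisation of this character-valued polynomial at $X=-1$.

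Next I would substitute $X=-1$ and use $\binom{-1}{i}=(-1)^i$ to get
\begin{equation*}
P_n(-1) = \sum_{i=0}^{n-2} (-1)^i \chi^s_i.
\end{equation*}
Reindexing via the identification of $C_i^n$ with the vector space generated by the strict $(i+1)$-chains, the right-hand side matches exactly the alternating sum $\sum_{m\geq -1}(-1)^m \chi^s_{m+1}$ of equation \eqref{CarEuler}, and Cohen-Macaulayness (Theorem \ref{Cohen-Macaulay}) forces all homology to sit in the top degree, so the alternating sum collapses to $-\chi_{\tilde H_n^n}$.

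The only real obstacle is bookkeeping: one must reconcile the two index conventions (the strict $k$-chain indexing on $\mathcal{HS}_k$ versus the $C_m^n$ indexing with its $+1$ shift), and keep track of the contribution of $C_{-1}^n = \mathbb{C}\cdot e$, which is responsible for the sign $``-{}''$ in the final identity $P_n(-1)=-\chi_{\tilde H_n^n}$. Once these conventions are fixed, the proof is a direct comparison between the polynomial extension of $\chi_k$ at $k=-1$ and the Euler characteristic formula \eqref{CarEuler}.
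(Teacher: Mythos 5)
Your proposal is correct and follows essentially the same route as the paper: the paper's argument is precisely the polynomiality of $\sum_{i=0}^{n-2}\binom{k}{i}\chi^s_i$ from the corollary, the extension to $k=-1$ via $\binom{-1}{i}=(-1)^i$, and the comparison with equation \eqref{CarEuler}, which yields $P_n(-1)=-\chi_{\tilde H_n^n}$ once one interprets $P_n$ at the character level. Only note that your phrase ``matches exactly the alternating sum $\sum_{m\geq-1}(-1)^m\chi^s_{m+1}$'' is off by a global sign (reindexing $i=m+1$ gives $P_n(-1)=-\sum_{m\geq-1}(-1)^m\chi^s_{m+1}$), but this is exactly the bookkeeping you flag, and your final identity is the right one.
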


\section{Relations between species and auxiliary species}

In this section, we define new species and establish connections between them. The reader may consult the appendix \ref{rappel espèces} for definitions of some usual species used in this part.

	\label{rel esp}

	\subsection{Pointed hypertrees}
	
Let $k$ be a natural number.

We define the following pointed hypertrees: 

\begin{defi} A \textbf{rooted hypertree} is a hypertree $H$ together with a vertex $s$ of $H$. The hypertree $H$ is said to be \emph{rooted at $s$} and $s$ is called the root of $H$.
\end{defi}

\begin{exple} A hypertree on nine vertices, rooted at $1$.

\begin{center}
\begin{tikzpicture}[scale=1]
\draw[black, fill=gray!40] (1,1) -- (1,0) -- (2,0) -- (2,1) -- (1,1);
\draw[black, fill=gray!40] (2,0) -- (3,0)--(3,1) -- (2,0);
\draw[black, fill=gray!40] (0,0) -- (1,0)--(0,1) -- (0,0);
\draw(2,2) -- (2,1);
\draw[black, fill=white] (0,0) circle (0.2);
\draw[black, fill=white] (0,1) circle (0.2);
\draw[black, fill=white] (1,1) circle (0.2);
\draw[black, fill=white] (1,0) circle (0.3);
\draw[black, fill=white] (1,0) circle (0.2);
\draw[black, fill=white] (2,0) circle (0.2);
\draw[black, fill=white] (2,1) circle (0.2);
\draw[black, fill=white] (3,1) circle (0.2);
\draw[black, fill=white] (3,0) circle (0.2);
\draw[black, fill=white] (2,2) circle (0.2);
\draw(0,0) node{$9$};
\draw(0,1) node{$8$};
\draw(1,1) node{$2$};
\draw(1,0) node{$1$};
\draw(2,1) node{$3$};
\draw(2,0) node{$4$};
\draw(3,1) node{$6$};
\draw(2,2) node{$5$};
\draw(3,0) node{$7$};
\end{tikzpicture}

\end{center}
\end{exple}

Let us recall that the minimum of a chain is the hypertree with the smallest number of edges on the chain.

The species associated with rooted hypertrees is denoted by $\mathcal{H}^p$. The one associated with large $k$-chains of hypertrees, whose minimum is a rooted hypertree, is denoted by $\mathcal{H}^p_k$. This vertex is then distinguished in the other hypertrees of the chain, so that all hypertrees in the chain can be considered as rooted at this vertex. In the following, the species $\mathcal{H}^p_k$ will be called "species of large rooted $k$-chains".

\begin{defi}  An \textbf{edge-pointed hypertree} is a hypertree $H$ together with an edge $a$ of $H$. The hypertree $H$ is said to be \emph{pointed at $a$}.
\end{defi}

\begin{exple}  A hypertree on seven vertices, pointed at the edge $\{1,2,3,4\}$.

\begin{center}
\begin{tikzpicture}[scale=1]
\draw[black, fill=gray!40] (1,1) -- (1,0) -- (2,0) -- (2,1) -- (1,1);
\draw[black, fill=gray!40] (1.1,0.9) -- (1.1,0.1) -- (1.9,0.1) -- (1.9,0.9) -- (1.1,0.9);
\draw[black, fill=gray!40] (0,0) -- (1,1)--(0,1) -- (0,0);
\draw(3,1) -- (2,1);
\draw[black, fill=white] (0,0) circle (0.2);
\draw[black, fill=white] (0,1) circle (0.2);
\draw[black, fill=white] (1,1) circle (0.2);
\draw[black, fill=white] (1,0) circle (0.2);
\draw[black, fill=white] (2,0) circle (0.2);
\draw[black, fill=white] (2,1) circle (0.2);
\draw[black, fill=white] (3,1) circle (0.2);
\draw(0,0) node{$6$};
\draw(0,1) node{$5$};
\draw(1,1) node{$1$};
\draw(1,0) node{$3$};
\draw(2,1) node{$2$};
\draw(2,0) node{$4$};
\draw(3,1) node{$7$};
\end{tikzpicture}

\end{center}
\end{exple}

The species associated with edge-pointed hypertrees is denoted by $\mathcal{H}^a$. The one associated with large $k$-chains of hypertrees whose minimum is an edge-pointed hypertree is denoted by $\mathcal{H}^a_k$.

\begin{defi} An \textbf{edge-pointed rooted hypertree} is a hypertree $H$ on at least two vertices, together with an edge $a$ of $H$ and a vertex $v$ of $a$. The hypertree $H$ is said to be \emph{pointed at $a$ and rooted at $s$}.
\end{defi}

\begin{exple} A hypertree on seven vertices, pointed at edge $\{1,2,3,4\}$ and rooted at $3$

\begin{center}
\begin{tikzpicture}[scale=1]
\draw[black, fill=gray!40] (1,1) -- (1,0) -- (2,0) -- (2,1) -- (1,1);
\draw[black, fill=gray!40] (1.1,0.9) -- (1.1,0.1) -- (1.9,0.1) -- (1.9,0.9) -- (1.1,0.9);
\draw[black, fill=gray!40] (0,0) -- (1,1)--(0,1) -- (0,0);
\draw(3,1) -- (2,1);
\draw[black, fill=white] (1,0) circle (0.3);
\draw[black, fill=white] (0,0) circle (0.2);
\draw[black, fill=white] (0,1) circle (0.2);
\draw[black, fill=white] (1,1) circle (0.2);
\draw[black, fill=white] (1,0) circle (0.2);
\draw[black, fill=white] (2,0) circle (0.2);
\draw[black, fill=white] (2,1) circle (0.2);
\draw[black, fill=white] (3,1) circle (0.2);
\draw(0,0) node{$6$};
\draw(0,1) node{$5$};
\draw(1,1) node{$1$};
\draw(1,0) node{$3$};
\draw(2,1) node{$2$};
\draw(2,0) node{$4$};
\draw(3,1) node{$7$};
\end{tikzpicture}.
\end{center}
\end{exple}

The species associated with edge-pointed rooted hypertrees is denoted by $\mathcal{H}^{pa}$. The one associated with large $k$-chains of hypertrees whose minimum is an edge-pointed rooted hypertree is denoted by $\mathcal{H}^{pa}_k$. 

	\subsection{Dissymmetry principle} 
	
	\label{paragraphe princ de diss}
	
The reader may consult book \cite[Chapitre 2.3]{BLL} for a deeper explanation on the dissymmetry principle. In a general way, a \emph{dissymmetry principle} is the use of a natural center to obtain the expression of a non pointed species in terms of pointed species. An example of this principle is the use of the center of a tree to express unrooted trees in terms of rooted trees. The expression of the hypertree species in term of pointed and rooted hypertrees species is the following:

\begin{prop}
The species of hypertrees and of rooted hypertrees are related by: 
\begin{equation}
\mathcal{H}+\mathcal{H}^{pa}=\mathcal{H}^p+\mathcal{H}^a.
\end{equation}
\end{prop}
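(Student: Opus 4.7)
The plan is to establish the species identity by constructing an explicit bijection
\[
\mathcal{H}^p(I) \sqcup \mathcal{H}^a(I) \xrightarrow{\sim} \mathcal{H}(I) \sqcup \mathcal{H}^{pa}(I)
\]
for every finite set $I$, following the pattern of the classical dissymmetry principle for trees described in \cite{BLL}. The key ingredient is a canonical \emph{center} attached to every hypertree.

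First I would define a canonical center $c(H)$ of each hypertree $H$, which is either a vertex or an edge of $H$. The cleanest way is to pass to the bipartite tree representation of $H$: build the graph on vertex set $V(H) \sqcup E(H)$ with an edge from $v \in V(H)$ to $e \in E(H)$ whenever $v \in e$. Because $H$ is a hypertree in the sense of the paper, this bipartite graph is a tree, to which the classical centroid construction applies; it selects a unique element, which is by construction either a vertex of $V(H)$ or an element of $E(H)$. Since this construction uses only the abstract hypertree structure, $c$ is equivariant under bijections of~$I$.

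Using $c$, the bijection is defined as follows. A rooted hypertree $(H,v)$ is sent to $H \in \mathcal{H}(I)$ if $v = c(H)$, and otherwise to $(H, v, a) \in \mathcal{H}^{pa}(I)$, where $a$ is the first edge of the unique minimal walk (proposition \ref{min walk}) from $v$ toward $c(H)$. Symmetrically, an edge-pointed hypertree $(H,a)$ is sent to $H$ if $a = c(H)$, and otherwise to $(H, v, a)$, where $v$ is the vertex of $a$ lying on the minimal walk from $a$ to $c(H)$. The inverse sends $H \in \mathcal{H}(I)$ back to $(H, c(H))$, placed in $\mathcal{H}^p$ or $\mathcal{H}^a$ according to whether $c(H)$ is a vertex or an edge, and sends $(H, v, a) \in \mathcal{H}^{pa}(I)$ back to $(H, v) \in \mathcal{H}^p$ if $a$ is the first edge on the minimal walk from $v$ to $c(H)$, and to $(H, a) \in \mathcal{H}^a$ otherwise. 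Because $v \in a$, exactly one of these two cases holds, and proposition \ref{min walk} guarantees that each minimal walk is unambiguous.

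The most delicate point is the existence and uniqueness of the centroid, together with the fact that it is either a vertex or an edge (not both); this reduces, via the bipartite tree representation, to the standard centroid theorem for ordinary trees. Once the center is well defined, compatibility of the bijection with relabelings of $I$ is automatic because every ingredient (center, minimal walk, first edge, first vertex) is defined purely from the abstract hypertree structure. Verifying that the two maps above are mutually inverse is then a direct case check on whether the pointed data coincides with $c(H)$, is adjacent to it, or is further away.
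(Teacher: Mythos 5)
Your bijection and its case analysis are sound, and in substance they coincide with the paper's proof (which constructs the same pair of inverse maps out of a canonical center); but there is a genuine gap exactly at the point you yourself flag as most delicate. The standard centroid theorem for trees does \emph{not} say the centroid is a unique node: it says the centroid consists of one node or of two \emph{adjacent} nodes, and in the bipartite incidence tree of a hypertree two adjacent nodes are precisely a vertex together with an incident edge --- which is exactly the configuration you need to exclude. This bad case really occurs: take the hypertree on $\{1,2,3,4\}$ with edges $e_1=\{1,2\}$ and $e_2=\{2,3,4\}$. Its incidence tree has the six nodes $1,2,3,4,e_1,e_2$, and removing the vertex-node $2$ leaves components of sizes $2$ and $3$, while removing the edge-node $e_2$ leaves components of sizes $3,1,1$; both have branch weight $3$, which is minimal, so the centroid is the adjacent pair $\{2,e_2\}$. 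Hence $c(H)$ is not well defined by your construction, and since every arrow of your bijection is built from $c$, the map is ill-defined for such hypertrees; the claimed reduction to ``the standard centroid theorem'' does not close this.

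The gap is repairable, and the repair is what the paper actually does: replace the centroid by the eccentricity-based center. Since every edge of a hypertree contains at least two vertices, every leaf of the incidence tree is a vertex-node; consequently distances from a vertex-node to vertex-nodes are even and from an edge-node to vertex-nodes are odd, the diameter is even, and the center is a single node, either a vertex or an edge but never both --- this is precisely the parity argument by which the paper proves uniqueness of its center (defined via minimal walks to vertices, which exist and are unique by proposition \ref{min walk}). Alternatively you could keep the centroid and add an explicit natural tie-breaking rule when it is a pair (say, always take the edge); any automorphism-equivariant single-valued choice suffices, because, as your own verification shows, the mutual-inverse check uses nothing about $c$ beyond naturality and uniqueness of minimal walks. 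With such a $c$ in place your argument is correct and is essentially the paper's proof read in the opposite direction (the paper defines $\phi$ on $\mathcal{H}+\mathcal{H}^{pa}$ and $\psi$ on $\mathcal{H}^p+\mathcal{H}^a$, with the same four cases).
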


\begin{proof}

For the proof, we need the following notions which use the proposition \ref{min walk}: 

\begin{defi} The \emph{eccentricity} of a vertex or an edge is the maximal number of vertices and edges on the minimal walk from it to another vertex. The \emph{center} of a hypertree (edge-pointed or not, rooted or not) is the vertex or the edge with minimal eccentricity.
\end{defi}

\begin{prop}
The center is unique.
\end{prop}

\begin{proof} We prove this proposition \textit{ad absurdum}.

Let us consider a hypertree $H$ such that there are two different vertices or edges $a$ and $b$ of same eccentricity $e$ which are centers of $H$. The number of vertices or edges on a walk from an edge to a vertex is even. The number of vertices or edges on a walk from a vertex to a vertex is odd. Therefore, either $a$ and $b$ are vertices, or they are edges, according to the parity of $e$. As they are different, there is a non trivial minimal walk of odd length from $a$ to $b$ with at least one element $c$ on it different from $a$ and $b$.

We consider a walk $(b, \ldots, e_n, v_n=f)$ from $b$ to a vertex $f$ such that $c$ is not in the walk. If $c$ is not in the unique minimal walk $(a, \ldots, e'_p, v'_p=f)$ from $a$ to $f$, then the concatenation $(b, \ldots, e_n, f, e'_p, \ldots, a)$ is a walk from $b$ to $a$ and $c$ is not in it. The edges of type $e_i$ (respectively $e'_j$) are all different. If this walk is not minimal, there is a minimal $i$ such that $e_i$ and $e'_j$ are equals for an integer $j$. Then the walk $(b, \ldots, e_i, v'_j, \ldots, a)$ is minimal and $c$ is not on it. It means that there are two different minimal walk from $b$ to $a$, which is not possible.

Therefore, for every vertex $f$, $c$ is either in the walk from $b$ to $f$ or in the walk from $a$ to $f$. The eccentricity of $c$ is then strictly less than $e$, which is in contradiction with the minimality of $e$.

\end{proof}

The following maps are bijections, inverse one of each other: 
\begin{equation*}\phi: \mathcal{H}+\mathcal{H}^{pa} \rightarrow \mathcal{H}^{a}+\mathcal{H}^{p},
\end{equation*} 
\begin{equation*}\psi: \mathcal{H}^{a}+\mathcal{H}^{p} \rightarrow \mathcal{H}+\mathcal{H}^{pa}.
\end{equation*}

If $T$ belongs to $\mathcal{H}$, $\phi(T)$ is the hypertree obtained by pointing the center of $T$. We thus obtain a rooted hypertree if the center is a vertex and an edge-pointed hypertree otherwise.  (case A)

Otherwise, $T$ belongs to $\mathcal{H}^{pa}$, $\phi(T)$ is the hypertree obtained from $T$ by: 
\begin{itemize}
\item forgetting the root of $T$ if it is its center, obtaining an edge-pointed hypertree, (case B) 
\item forgetting the pointed edge of $T$ if it is its center, obtaining a rooted hypertree, (case C)
\item forgetting the pointed edge or root which is the nearest from the center of the hypertree.(case D)
\end{itemize}

If $T$ belongs to $\mathcal{H}^{a}$, $\psi(T)$ is the hypertree obtained from $T$ by: 
\begin{itemize}
\item forgetting the pointed edge of $T$ if it is its center,(converse of case A)
\item rooting the center of $T$ if it belongs to the pointed edge of $T$, (converse of case B)
\item rooting the nearest vertex of the pointed edge from the center of $T$. (converse of case D)
\end{itemize}

Otherwise, $T$ belongs to $\mathcal{H}^{p}$, $\psi(T)$ is the hypertree obtained from $T$ by: 
\begin{itemize}
\item forgetting the root of $T$ if it is its center, (converse of case A)
\item pointing the center if it is an edge containing the root of $T$, (converse of case C)
\item pointing the nearest edge containing the root from the center of $T$. (converse of case D)
\end{itemize}

\end{proof}

Let $k$ be a natural number.

The following proposition links large $k$-chains of hypertrees, rooted hypertrees, edge-pointed hypertrees and edge-pointed rooted hypertrees.

\begin{prop}[Dissymmetry principle for hypertrees chains] \label{principe de dissymétrie} The following relation holds: 
\begin{equation}
\mathcal{H}_k+\mathcal{H}^{pa}_k=\mathcal{H}^p_k+\mathcal{H}^a_k.
\end{equation}
\end{prop}

\begin{proof}
We apply the dissymmetry principle to the minimum of the chain.
\end{proof}

\subsection{Relations between species}

		\subsubsection{Relations for $\mathcal{H}^p_k$ }
To determine a functional equation for $\mathcal{H}^p_k$, we introduce another type of hypertree.
		
\begin{defi}
A \emph{hollow hypertree} on $n$ vertices ($n \geq 2$) is a hypertree on the set $\{\#, 1, \ldots, n-1\}$, such that the vertex labelled by $\#$, called the gap, belongs to one and only one edge. 
\end{defi}

\begin{exple}Hollow hypertree on eight vertices.

\begin{center}
\begin{tikzpicture}[scale=1]
\draw[black, fill=gray!40] (2,0) -- (3,-1) -- (4,0) -- (3,1) -- (2,0);
\draw[black, fill=gray!40] (1,0) -- (1,1) -- (2,1) -- (2,0) -- (1.5,-1) -- (1,0);
\draw(0,0) -- (1,0);
\draw[black, fill=white] (0,0) circle (0.2);
\draw[black, fill=white] (1,1) circle (0.2);
\draw[black, fill=white] (1,0) circle (0.2);
\draw[black, fill=white] (2,0) circle (0.2);
\draw[black, fill=white] (2,1) circle (0.2);
\draw[black, fill=white] (3,1) circle (0.2);
\draw[black, fill=white] (3,-1) circle (0.2);
\draw[black, fill=white] (4,0) circle (0.2);
\draw[black, fill=white] (1.5,-1) circle (0.2);
\draw(0,0) node{$5$};
\draw(1,1) node{$2$};
\draw(1,0) node{$1$};
\draw(2,1) node{$3$};
\draw(2,0) node{$4$};
\draw(3,1) node{$6$};
\draw(1.5,-1) node{$\#$};
\draw(3,-1) node{$8$};
\draw(4,0) node{$7$};
\end{tikzpicture}
\end{center}
\end{exple}

\begin{defi} \label{rhc} A \textbf{hollow hypertrees $k$-chain} is a chain of length $k$ in the poset of hypertrees on $\{\#, 1, \ldots, n-1\}$, whose minimum is a hollow hypertree. The species of hollow hypertrees $k$-chains is denoted by $\mathcal{H}^c_k$. The species of hollow hypertrees $k$-chains whose minimum has only one edge, is denoted by $\mathcal{H}^{cm}_k$. Remark that the other hypertrees of the chain are not necessarily hollow hypertrees because the vertex labelled by $\#$ is in one and only one edge in the minimum of the chain but can be in two or more edges then.
\end{defi}

These species are linked by the following proposition:

\begin{prop} \label{décomp} The species $ \mathcal{H}^p_k$, $\mathcal{H}^c_k$ and $\mathcal{H}^{cm}_k$ satisfy: 
\begin{equation} \label{epc}
\mathcal{H}^p_k=X \times \operatorname{Comm} \circ  \mathcal{H}^c_k + X,
\end{equation}
\begin{equation} \label{ecct}
\mathcal{H}^c_k= \mathcal{H}^{cm}_k \circ \mathcal{H}^p_k ,
\end{equation}
\begin{equation}\label{ectc}
\mathcal{H}^{cm}_k = \operatorname{Comm} \circ \mathcal{H}^c_{k-1}.
\end{equation}
\end{prop}

\begin{proof}

\begin{enumerate}
\item A $k$-chain of rooted hypertrees on one vertex is just the same as one vertex repeated $k$ times. Thus, it is the same object as a singleton, so the associated species is the species $X$.

We now consider $k$-chains of rooted hypertrees on at least two vertices. Each such chain can be separated into a singleton and a set of hollow hypertrees $k$-chains. The singleton is the root of the minimum hypertree. The set of hollow hypertrees $k$-chains is obtained by: 
  \begin{itemize} 
\item deleting the root in every hypertree,
\item putting a gap $\#$ where the root was,
\item separating in the minimum the edges containing gaps, so that we obtain a set of hollow hypertrees. 
 \end{itemize}

The third point induces a decomposition of a chain into sets of hollow hypertrees chains. Indeed, it gives a partition of the set of edges such that every vertex different from the root appears exactly one time, and this partition is preserved during the chain. This gives the result \eqref{epc}.
 
 \begin{exple} A rooted hypertrees chain decomposed into a singleton and a set of hollow hypertrees $k$-chains. Here are drawn only the minima (at the top) and the maxima (at the bottom) of the chains.
  
 \begin{center}
 \begin{tikzpicture}[scale=0.9]
\draw[black, fill=gray!40] (1,1) -- (1,0) -- (2,0) -- (2,1) -- (1,1);
\draw[black, fill=gray!40] (2,0) -- (3,0)--(3,1) -- (2,0);
\draw[black, fill=gray!40] (0,0) -- (1,0)--(0,1) -- (0,0);
\draw(2,2) -- (2,1);
\draw[black, fill=white] (0,0) circle (0.2);
\draw[black, fill=white] (0,1) circle (0.2);
\draw[black, fill=white] (1,1) circle (0.2);
\draw[black, fill=white] (1,0) circle (0.3);
\draw[black, fill=white] (1,0) circle (0.2);
\draw[black, fill=white] (2,0) circle (0.2);
\draw[black, fill=white] (2,1) circle (0.2);
\draw[black, fill=white] (3,1) circle (0.2);
\draw[black, fill=white] (3,0) circle (0.2);
\draw[black, fill=white] (2,2) circle (0.2);
\draw(0,0) node{$9$};
\draw(0,1) node{$8$};
\draw(1,1) node{$2$};
\draw(1,0) node{$1$};
\draw(2,1) node{$3$};
\draw(2,0) node{$4$};
\draw(3,1) node{$6$};
\draw(2,2) node{$5$};
\draw(3,0) node{$7$};

\draw[double,>=stealth, ->](1.5,-0.5) -- (1.5,-1.5);

\draw[black, fill=gray!40] (1,-3) -- (2,-3) -- (2,-2) -- (1,-3);
\draw(0,-3) -- (1,-3);
\draw(1,-2) -- (1,-3);
\draw(0,-2) -- (1,-3);
\draw(2,-2) -- (3,-2);
\draw(2,-3) -- (3,-3);
\draw(4,-3) -- (3,-3);
\draw[black, fill=white] (1,-3) circle (0.3);
\draw[black, fill=white] (0,-3) circle (0.2);
\draw[black, fill=white] (0,-2) circle (0.2);
\draw[black, fill=white] (1,-2) circle (0.2);
\draw[black, fill=white] (1,-3) circle (0.2);
\draw[black, fill=white] (2,-3) circle (0.2);
\draw[black, fill=white] (2,-2) circle (0.2);
\draw[black, fill=white] (3,-2) circle (0.2);
\draw[black, fill=white] (3,-3) circle (0.2);
\draw[black, fill=white] (4,-3) circle (0.2);
\draw(0,-3) node{$9$};
\draw(0,-2) node{$8$};
\draw(1,-2) node{$2$};
\draw(1,-3) node{$1$};
\draw(2,-2) node{$3$};
\draw(2,-3) node{$4$};
\draw(3,-2) node{$5$};
\draw(3,-3) node{$6$};
\draw(4,-3) node{$7$};

\draw[double distance = 2pt,>=stealth, <->](3.25,-1) -- (4.75,-1);

\draw[black, fill=white] (5.25,-1) circle (0.2);
\draw(5.25,-1) node{$1$};

\draw(5.75,-1) node{$+$};

\draw[black, fill=gray!40] (6,0) -- (7,0)--(6,1) -- (6,0);
\draw[black, fill=white] (6,0) circle (0.2);
\draw[black, fill=white] (6,1) circle (0.2);
\draw[black, fill=white] (7,0) circle (0.2);
\draw(6,0) node{$9$};
\draw(6,1) node{$8$};
\draw(7,0) node{$\#$};

\draw[double,>=stealth, ->](6.5,-0.5) -- (6.5,-1.5);

\draw(6,-3) -- (7,-3);
\draw(6,-2) -- (7,-3);
\draw[black, fill=white] (6,-3) circle (0.2);
\draw[black, fill=white] (6,-2) circle (0.2);
\draw[black, fill=white] (7,-3) circle (0.2);
\draw(6,-3) node{$9$};
\draw(6,-2) node{$8$};
\draw(7,-3) node{$\#$};

\draw[black, fill=gray!40] (8,1) -- (8,0) -- (9,0) -- (9,1) -- (8,1);
\draw[black, fill=gray!40] (9,0) -- (10,0)--(10,1) -- (9,0);
\draw(9,2) -- (9,1);
\draw[black, fill=white] (8,0) circle (0.2);
\draw[black, fill=white] (8,1) circle (0.2);
\draw[black, fill=white] (8,0) circle (0.2);
\draw[black, fill=white] (9,0) circle (0.2);
\draw[black, fill=white] (9,1) circle (0.2);
\draw[black, fill=white] (10,1) circle (0.2);
\draw[black, fill=white] (10,0) circle (0.2);
\draw[black, fill=white] (9,2) circle (0.2);
\draw(8,1) node{$2$};
\draw(8,0) node{$\#$};
\draw(9,1) node{$3$};
\draw(9,0) node{$4$};
\draw(10,1) node{$6$};
\draw(9,2) node{$5$};
\draw(10,0) node{$7$};

\draw[double,>=stealth, ->](9,-0.5) -- (9,-1.5);

\draw[black, fill=gray!40] (8,-3) -- (9,-3) -- (9,-2) -- (8,-3);
\draw(8,-2) -- (8,-3);
\draw(9,-2) -- (10,-2);
\draw(9,-3) -- (10,-3);
\draw(11,-3) -- (10,-3);
\draw[black, fill=white] (8,-2) circle (0.2);
\draw[black, fill=white] (8,-3) circle (0.2);
\draw[black, fill=white] (9,-2) circle (0.2);
\draw[black, fill=white] (9,-3) circle (0.2);
\draw[black, fill=white] (10,-2) circle (0.2);
\draw[black, fill=white] (10,-3) circle (0.2);
\draw[black, fill=white] (11,-3) circle (0.2);
\draw(8,-2) node{$2$};
\draw(8,-3) node{$\#$};
\draw(9,-2) node{$3$};
\draw(9,-3) node{$4$};
\draw(10,-2) node{$5$};
\draw(10,-3) node{$6$};
\draw(11,-3) node{$7$};
\end{tikzpicture}
 \end{center}
 \end{exple}

\item Let $S$ be a hollow hypertrees $k$-chain as defined in definition \ref{rhc}. 

The hollow edge, i.e. the edge containing the gap, gives at each stage $l$ of the chain a set of distinguished edges $D^l_e$. Considering only these distinguished edges, we obtain a hollow hypertrees $k$-chain $D$ whose minimum has only one edge.

Deleting the hollow edge $D^1_e$ in the minimum of $S$ gives a hypertree forest, i.e. a list of hypertrees, $(h_1, \ldots, h_f)$. Each hypertree $h_i$ has a distinguished vertex $s_i$ which was in the hollow edge. Let us say that $h_i$ is rooted at $s_i$. The evolution of edges of the hypertree $h_i$ in $S$ induces a chain $S_{h_i}$. The rootedness of $h_i$ induces a rootedness of $S_{h_i}$.

Note that the hypertrees forest $(h^l_1,\ldots, h^l_f)$ obtained at stage $l$ of the chain by deleting $D^l_e$ is the same as the hypertrees forest obtained by taking the hypertrees at stage $l$ in chains $S_{h_1}, \ldots, S_{h_f}$.

Thus the chain $S$ is chain $D$, where at stage $l$, on vertex $i$, we have grafted hypertree $h_{i,l}$. The grafting consists in replacing vertex $i$ by the root of $h_{i,l}$ in the hypertree.

The chain $S$ can also be seen as chain $D$, where the rooted hypertrees chain $S_{h_i}$ has been inserted in vertex $i$. This gives result \eqref{ecct}.

\begin{exple} Hollow hypertrees chain, separated into a hollow hypertrees chain whose minimum has only one edge, and whose vertices are rooted hypertrees chains.

\begin{center}
\begin{tikzpicture}[scale=0.9]
\draw[black, fill=gray!40] (3,1) -- (1,2) -- (2,2) -- (3,2) -- (3,1);
\draw[black, fill=gray!40] (1,0) -- (2,0) -- (3,0) -- (3,1) -- (1,1) -- (1,0);
\draw(0,0) -- (1,0);
\draw[black, fill=white] (0,0) circle (0.2);
\draw[black, fill=white] (1,1) circle (0.2);
\draw[black, fill=white] (1,0) circle (0.2);
\draw[black, fill=white] (2,0) circle (0.2);
\draw[black, fill=white] (3,0) circle (0.2);
\draw[black, fill=white] (3,2) circle (0.2);
\draw[black, fill=white] (3,1) circle (0.2);
\draw[black, fill=white] (2,2) circle (0.2);
\draw[black, fill=white] (1,2) circle (0.2);
\draw(0,0) node{$5$};
\draw(1,1) node{$2$};
\draw(1,0) node{$1$};
\draw(3,0) node{$3$};
\draw(3,1) node{$4$};
\draw(3,2) node{$6$};
\draw(2,0) node{$\#$};
\draw(1,2) node{$8$};
\draw(2,2) node{$7$};

\draw[double,>=stealth, ->](2,-0.5) -- (2,-1.5);

\draw[black, fill=gray!40] (2,-4) -- (1,-3) -- (3,-3) -- (2,-4);
\draw(0,-4) -- (1,-4);
\draw(1,-3) -- (1,-4);
\draw(2,-4) -- (3,-4);
\draw(3,-3) -- (3,-2);
\draw(3,-3) -- (2,-2);
\draw(2,-2) -- (1,-2);
\draw[black, fill=white] (0,-4) circle (0.2);
\draw[black, fill=white] (1,-3) circle (0.2);
\draw[black, fill=white] (1,-4) circle (0.2);
\draw[black, fill=white] (2,-4) circle (0.2);
\draw[black, fill=white] (3,-4) circle (0.2);
\draw[black, fill=white] (3,-2) circle (0.2);
\draw[black, fill=white] (3,-3) circle (0.2);
\draw[black, fill=white] (2,-2) circle (0.2);
\draw[black, fill=white] (1,-2) circle (0.2);
\draw(0,-4) node{$5$};
\draw(1,-3) node{$2$};
\draw(1,-4) node{$1$};
\draw(3,-4) node{$3$};
\draw(3,-3) node{$4$};
\draw(3,-2) node{$6$};
\draw(2,-4) node{$\#$};
\draw(1,-2) node{$8$};
\draw(2,-2) node{$7$};

\draw[double distance = 2pt,>=stealth, <->](3.25,-1) -- (4.5,-1);

\draw[black, fill=gray!40] (5.25,2) -- (6,1) -- (6,0) -- (4.5,0) -- (4.5,1) -- (5.25,2);
\draw[black, fill=white] (5.25,2) circle (0.3);
\draw[black, fill=white] (6,0) circle (0.3);
\draw[black, fill=white] (6,1) circle (0.3);
\draw[black, fill=white] (4.5,0) circle (0.3);
\draw[black, fill=white] (4.5,1) circle (0.3);
\draw(5.25,2) node{$*_1$};
\draw(6,0) node{$*_3$};
\draw(6,1) node{$*_2$};
\draw(4.5,0) node{$\#$};
\draw(4.5,1) node{$*_4$};

\draw[double,>=stealth, ->](5.25,-0.5) -- (5.25,-1.5);

\draw[black, fill=gray!40] (6,-3) -- (4.5,-4) -- (4.5,-3) -- (6,-3);
\draw(6,-3) -- (5.25,-2);
\draw(4.5,-4) -- (6,-4);
\draw[black, fill=white] (5.25,-2) circle (0.3);
\draw[black, fill=white] (6,-4) circle (0.3);
\draw[black, fill=white] (6,-3) circle (0.3);
\draw[black, fill=white] (4.5,-4) circle (0.3);
\draw[black, fill=white] (4.5,-3) circle (0.3);
\draw(5.25,-2) node{$*_1$};
\draw(6,-4) node{$*_3$};
\draw(6,-3) node{$*_2$};
\draw(4.5,-4) node{$\#$};
\draw(4.5,-3) node{$*_4$};

\draw(7,-1) node{with};

\draw(8,1.5) node{$*_1$};

\draw(8,0) -- (8,1);
\draw[black, fill=white] (8,0) circle (0.3);
\draw[black, fill=white] (8,0) circle (0.2);
\draw[black, fill=white] (8,1) circle (0.2);
\draw(8,0) node{$1$};
\draw(8,1) node{$5$};

\draw[double,>=stealth, ->](8,-0.5) -- (8,-1.5);

\draw(8,-2) -- (8,-3);
\draw[black, fill=white] (8,-3) circle (0.3);
\draw[black, fill=white] (8,-3) circle (0.2);
\draw[black, fill=white] (8,-2) circle (0.2);
\draw(8,-3) node{$1$};
\draw(8,-2) node{$5$};

\draw(9,1.5) node{$*_2$};

\draw[black, fill=white] (9,0) circle (0.3);
\draw[black, fill=white] (9,0) circle (0.2);
\draw(9,0) node{$2$};

\draw[double,>=stealth, ->](9,-0.5) -- (9,-1.5);

\draw[black, fill=white] (9,-2) circle (0.3);
\draw[black, fill=white] (9,-2) circle (0.2);
\draw(9,-2) node{$2$};

\draw(10,1.5) node{$*_3$};

\draw[black, fill=white] (10,0) circle (0.3);
\draw[black, fill=white] (10,0) circle (0.2);
\draw(10,0) node{$3$};

\draw[double,>=stealth, ->](10,-0.5) -- (10,-1.5);

\draw[black, fill=white] (10,-2) circle (0.3);
\draw[black, fill=white] (10,-2) circle (0.2);
\draw(10,-2) node{$3$};

\draw(11.5,1.5) node{$*_4$};

\draw[black, fill=gray!40] (11,1) -- (11,0) -- (12,0) -- (12,1) -- (11,1);
\draw[black, fill=white] (11,1) circle (0.2);
\draw[black, fill=white] (11,0) circle (0.3);
\draw[black, fill=white] (11,0) circle (0.2);
\draw[black, fill=white] (12,0) circle (0.2);
\draw[black, fill=white] (12,1) circle (0.2);
\draw(11,1) node{$6$};
\draw(11,0) node{$4$};
\draw(12,1) node{$8$};
\draw(12,0) node{$7$};

\draw[double,>=stealth, ->](11.5,-0.5) -- (11.5,-1.5);

\draw(11,-2) -- (11,-3);
\draw(12,-3) -- (11,-3);
\draw(12,-2) -- (12,-3);
\draw[black, fill=white] (11,-2) circle (0.2);
\draw[black, fill=white] (11,-3) circle (0.3);
\draw[black, fill=white] (11,-3) circle (0.2);
\draw[black, fill=white] (12,-3) circle (0.2);
\draw[black, fill=white] (12,-2) circle (0.2);
\draw(11,-2) node{$6$};
\draw(11,-3) node{$4$};
\draw(12,-2) node{$8$};
\draw(12,-3) node{$7$};

\end{tikzpicture}
\end{center}
\end{exple}
\item A hollow hypertrees $k$-chains, whose minimum has only one edge can be seen as a ($k-1$)-chain $C_{k-1}$ with a vertex labelled by $\#$. Separating the edges containing the label $\#$ in the minimum of $C_{k-1}$ is the same has separating this chain in a non-empty set of hollow hypertrees $(k-1)$-chains. This gives the result \eqref{ectc}.

\end{enumerate}

\end{proof}

As the species $\mathcal{H}^p_k$ can be factored by the species $X$, the map $\frac{\mathcal{H}^p_k - X}{X}$ is a species. We obtain the following corollary:

\begin{cor}The species $\mathcal{H}^p_k$ satisfies: 
\begin{equation}\label{eHpk}
\mathcal{H}^p_k=X \times Comm \circ \left( \frac{\mathcal{H}^p_{k-1}-X}{X} \circ \mathcal{H}^p_k \right) +X.
\end{equation}
\end{cor}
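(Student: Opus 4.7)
The plan is to derive the identity by pure substitution among the three equations \eqref{epc}, \eqref{ecct}, \eqref{ectc} of Proposition \ref{d�comp}; no new combinatorial argument is needed.

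First I would justify that the expression $\frac{\mathcal{H}^p_{k-1}-X}{X}$ appearing in the statement really is a species, not just a formal fraction. This is immediate from \eqref{epc} applied at level $k-1$: that equation gives
\begin{equation*}
\mathcal{H}^p_{k-1} - X = X \times \bigl(\operatorname{Comm} \circ \mathcal{H}^c_{k-1}\bigr),
\end{equation*}
so dividing by $X$ leaves the honest species $\operatorname{Comm} \circ \mathcal{H}^c_{k-1}$. In other words,
\begin{equation*}
\frac{\mathcal{H}^p_{k-1}-X}{X} = \operatorname{Comm} \circ \mathcal{H}^c_{k-1}.
\end{equation*}

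Next I would recognize the right-hand side as $\mathcal{H}^{cm}_k$ by \eqref{ectc}. Thus
\begin{equation*}
\mathcal{H}^{cm}_k = \frac{\mathcal{H}^p_{k-1}-X}{X}.
\end{equation*}
Plugging this into \eqref{ecct} yields
\begin{equation*}
\mathcal{H}^c_k \;=\; \mathcal{H}^{cm}_k \circ \mathcal{H}^p_k \;=\; \frac{\mathcal{H}^p_{k-1}-X}{X}\circ \mathcal{H}^p_k.
\end{equation*}

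Finally I would substitute this expression for $\mathcal{H}^c_k$ back into \eqref{epc}:
\begin{equation*}
\mathcal{H}^p_k \;=\; X \times \operatorname{Comm}\circ \mathcal{H}^c_k + X \;=\; X \times \operatorname{Comm} \circ \left(\frac{\mathcal{H}^p_{k-1}-X}{X}\circ \mathcal{H}^p_k\right) + X,
\end{equation*}
which is exactly the claim. There is no genuine obstacle here: the only delicate point is the well-definedness of the quotient by $X$, and that is already handled by \eqref{epc} at the previous level. The equation is essentially the functional equation one obtains by collapsing the three-step decomposition (rooted chain $\to$ set of hollow chains $\to$ grafted one-edge hollow chains) into a single recursion relating $\mathcal{H}^p_k$ to $\mathcal{H}^p_{k-1}$.
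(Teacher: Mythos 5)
Your proposal is correct and matches the paper's (largely implicit) argument: the corollary is obtained exactly by noting that \eqref{epc} at level $k-1$ together with \eqref{ectc} identifies $\mathcal{H}^{cm}_k$ with the species $\frac{\mathcal{H}^p_{k-1}-X}{X}$, and then substituting \eqref{ecct} into \eqref{epc}. Your explicit justification that the quotient by $X$ is a genuine species is the same well-definedness remark the paper makes just before stating the corollary.
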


		\subsubsection{Relations for $\mathcal{H}^a_k$}

We have the following relation: 

\begin{prop} \label{eHak} The species $\mathcal{H}^a_k$ satisfies: 
\begin{equation}\mathcal{H}^a_k=(\mathcal{H}_{k-1}-X) \circ \mathcal{H}^p_k.
\end{equation}
\end{prop}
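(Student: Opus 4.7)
The plan is to construct a species isomorphism by decomposing an edge-pointed chain around its pointed edge. Let $(h_1 \preceq \cdots \preceq h_k)$ lie in $\mathcal{H}^a_k(V)$ with $h_1$ pointed at an edge $a=\{v_1,\ldots,v_p\}$; since edges have cardinality at least two, $p \geq 2$, which accounts for the subtraction of $X$ in the statement. Removing $a$ from $h_1$ disconnects it into $p$ components with vertex sets $V_1,\ldots,V_p$ (with $v_i \in V_i$), yielding a partition $V = V_1 \sqcup \cdots \sqcup V_p$ of size $\geq 2$; the restriction $T_i := h_1|_{V_i}$ is a hypertree rooted at $v_i$.

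At each level $j$, I would split the edges of $h_j$ into two classes using $h_1 \preceq h_j$: every edge of $h_j$ is a subset of some edge of $h_1$, hence either a subset of $a$, which I call a \emph{top} edge (necessarily contained in $\{v_1,\ldots,v_p\}$), or a subset of an edge of a unique $T_i$, a \emph{bottom} edge in $V_i$ (contained in $V_i$). The dichotomy is strict because $a \cap V_i = \{v_i\}$ has size one while edges have size at least two. Grouping the top edges gives a hypergraph $a^{(j)}$ on $\{v_1,\ldots,v_p\}$, and the bottom edges in $V_i$ give a hypergraph $T_i^{(j)}$ on $V_i$; I would check that both are hypertrees, using that the edges of $h_j$ inside any fixed edge of $h_1$ refine that edge into a hypertree, and that $T_i^{(j)}$ assembles such refinements along the single shared vertices prescribed by $T_i$. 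Since $a^{(1)} = \{a\}$ is forced to be the minimum of $\operatorname{HT}(\{v_1,\ldots,v_p\})$, the residual $(a^{(2)},\ldots,a^{(k)})$ is a large $(k-1)$-chain on $\{v_1,\ldots,v_p\}$ with no constraint, while $(T_i^{(1)},\ldots,T_i^{(k)})$, rooted at $v_i$, is a large rooted $k$-chain on $V_i$.

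The inverse sends a partition $\{V_1,\ldots,V_p\}$ with $p \geq 2$, a large $(k-1)$-chain on this set of parts, and for each $i$ a rooted $k$-chain on $V_i$ with root $v_i$, to the chain whose hypertree at level $j$ has edge set $a^{(j)} \cup T_1^{(j)} \cup \cdots \cup T_p^{(j)}$ (with $a^{(1)}$ defined as the single edge $\{v_1,\ldots,v_p\}$), edge-pointed at that edge when $j = 1$. I expect the main obstacle to be the verification that the reconstructed $h_j$ is a hypertree and that $h_j \preceq h_{j+1}$ holds: both reduce to the fact that grafting hypertrees along single shared vertices preserves the hypertree property, together with the fact that a simultaneous refinement of the top piece $a^{(j)}$ and of each bottom piece $T_i^{(j)}$ assembles into a refinement of the whole. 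All constructions are natural in $V$, so the bijection is a species isomorphism, yielding the announced identity.
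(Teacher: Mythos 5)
Your proposal is correct and is essentially the paper's own argument: the fission of the pointed edge gives the ``top'' chain on the blocks whose forced one-edge minimum is dropped (the $\mathcal{H}_{k-1}-X$ factor, on at least two blocks), while the components obtained by deleting the pointed edge carry rooted $k$-chains grafted at the vertices of that edge (the $\mathcal{H}^p_k$ factor inserted into each vertex). The verifications you flag (that the top and bottom pieces are hypertrees, that the orders are preserved levelwise, and that the reconstruction is inverse) are exactly the steps the paper leaves implicit by appealing to its analogous decomposition for hollow hypertree chains, so no further comparison is needed.
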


\begin{proof}
Let $S$ be an edge-pointed hypertrees $k$-chain.
The pointed edge in the minimum of $S$ gives at each stage $l$ of the chain a set of distinguished edges $D^l_e$, obtained from the fission of the pointed edge. Considering only these distinguished edges, we obtain a hypertrees $k$-chain $D$ whose minimum has only one edge. That chain can be seen as a ($k-1$)-chain of hypertrees on at least two vertices.

Deleting the pointed edge $D^1_e$ in the minimum of $S$ gives a hypertrees forest, i.e. a list of hypertrees, $(h_1, \ldots, h_f)$. Each hypertree $h_i$ has a distinguished vertex $s_i$ which was in the pointed edge. Let us say that $h_i$ is rooted at $s_i$. The evolution of edges of hypertree $h_i$ at $S$ induces a chain $S_{h_i}$. The rootedness of $h_i$ induces a rootedness of $S_{h_i}$.

Note that the hypertrees forest $(h^l_1,\ldots, h^l_f)$ obtained at stage $l$ of the chain by deleting $D^l_e$ is the same as the hypertrees forest obtained by taking hypertree at stage $l$ in chains $S_{h_1}, \ldots, S_{h_f}$

Thus the chain $S$ is the chain $D$, where at stage $l$, on vertex $i$, we have grafted hypertree $h_{i,l}$. The grafting consists in replacing vertex $i$ by the root of $h_{i,l}$ in the hypertree.

The chain $S$ can also be seen as the chain $D$, where the rooted hypertrees chain $S_{h_i}$ has been inserted in vertex $i$. This gives the result, as in the proof of proposition \ref{décomp}.

\begin{exple} Edge-pointed hypertrees chain, separated into a hypertrees chain whose vertices are rooted hypertrees chains.

\begin{center}
\begin{tikzpicture}[scale=1]
\draw[black, fill=gray!40] (1,1) -- (1,0) -- (2,0) -- (2,1) -- (1,1);
\draw[black, fill=gray!40] (1.1,0.9) -- (1.1,0.1) -- (1.9,0.1) -- (1.9,0.9) -- (1.1,0.9);
\draw[black, fill=gray!40] (0,0) -- (1,1)--(0,1) -- (0,0);
\draw(3,1) -- (2,1);
\draw[black, fill=white] (0,0) circle (0.2);
\draw[black, fill=white] (0,1) circle (0.2);
\draw[black, fill=white] (1,1) circle (0.2);
\draw[black, fill=white] (1,0) circle (0.2);
\draw[black, fill=white] (2,0) circle (0.2);
\draw[black, fill=white] (2,1) circle (0.2);
\draw[black, fill=white] (3,1) circle (0.2);
\draw(0,0) node{$6$};
\draw(0,1) node{$5$};
\draw(1,1) node{$1$};
\draw(1,0) node{$3$};
\draw(2,1) node{$2$};
\draw(2,0) node{$4$};
\draw(3,1) node{$7$};

\draw[double,>=stealth, ->](1.5,-0.5) -- (1.5,-1.5);

\draw(0,-2) -- (1,-2);
\draw(0,-3) -- (1,-2);
\draw(3,-2) -- (2,-2);
\draw(1,-3) -- (1,-2);
\draw(1,-3) -- (2,-3);
\draw(2,-2) -- (2,-3);
\draw[black, fill=white] (0,-3) circle (0.2);
\draw[black, fill=white] (0,-2) circle (0.2);
\draw[black, fill=white] (1,-2) circle (0.2);
\draw[black, fill=white] (1,-3) circle (0.2);
\draw[black, fill=white] (2,-3) circle (0.2);
\draw[black, fill=white] (2,-2) circle (0.2);
\draw[black, fill=white] (3,-2) circle (0.2);
\draw(0,-3) node{$6$};
\draw(0,-2) node{$5$};
\draw(1,-2) node{$1$};
\draw(1,-3) node{$3$};
\draw(2,-2) node{$2$};
\draw(2,-3) node{$4$};
\draw(3,-2) node{$7$};

\draw[double distance = 2pt,>=stealth, <->](3.25,-1) -- (4.75,-1);

\draw[black, fill=gray!40] (5,1) -- (5,0) -- (6,0) -- (6,1) -- (5,1);
\draw[black, fill=gray!40] (5.1,0.9) -- (5.1,0.1) -- (5.9,0.1) -- (5.9,0.9) -- (5.1,0.9);
\draw[black, fill=white] (5,1) circle (0.3);
\draw[black, fill=white] (5,0) circle (0.3);
\draw[black, fill=white] (6,0) circle (0.3);
\draw[black, fill=white] (6,1) circle (0.3);
\draw(5,1) node{$*_1$};
\draw(5,0) node{$*_3$};
\draw(6,1) node{$*_2$};
\draw(6,0) node{$*_4$};

\draw[double,>=stealth, ->](5.5,-0.5) -- (5.5,-1.5);

\draw(5,-2) -- (5,-3);
\draw(6,-3) -- (5,-3);
\draw(6,-2) -- (6,-3);
\draw[black, fill=white] (5,-2) circle (0.3);
\draw[black, fill=white] (5,-3) circle (0.3);
\draw[black, fill=white] (6,-3) circle (0.3);
\draw[black, fill=white] (6,-2) circle (0.3);
\draw(5,-2) node{$*_1$};
\draw(5,-3) node{$*_3$};
\draw(6,-2) node{$*_2$};
\draw(6,-3) node{$*_4$};

\draw(7,-1) node{with};

\draw(8.5,1.5) node{$*_1$};

\draw[black, fill=gray!40] (8,0) -- (9,1)--(8,1) -- (8,0);
\draw[black, fill=white] (9,1) circle (0.3);
\draw[black, fill=white] (8,0) circle (0.2);
\draw[black, fill=white] (8,1) circle (0.2);
\draw[black, fill=white] (9,1) circle (0.2);
\draw(8,0) node{$6$};
\draw(8,1) node{$5$};
\draw(9,1) node{$1$};

\draw[double,>=stealth, ->](8.5,-0.5) -- (8.5,-1.5);

\draw (8,-3) -- (9,-2);
\draw (8,-2) -- (9,-2);
\draw[black, fill=white] (8,-3) circle (0.2);
\draw[black, fill=white] (8,-2) circle (0.2);
\draw[black, fill=white] (9,-2) circle (0.3);
\draw[black, fill=white] (9,-2) circle (0.2);
\draw(8,-3) node{$6$};
\draw(8,-2) node{$5$};
\draw(9,-2) node{$1$};

\draw(10,1.5) node{$*_2$};

\draw(10,0) -- (10,1);
\draw[black, fill=white] (10,0) circle (0.3);
\draw[black, fill=white] (10,0) circle (0.2);
\draw[black, fill=white] (10,1) circle (0.2);
\draw(10,0) node{$2$};
\draw(10,1) node{$7$};

\draw[double,>=stealth, ->](10,-0.5) -- (10,-1.5);

\draw(10,-2) -- (10,-3);
\draw[black, fill=white] (10,-3) circle (0.3);
\draw[black, fill=white] (10,-3) circle (0.2);
\draw[black, fill=white] (10,-2) circle (0.2);
\draw(10,-3) node{$2$};
\draw(10,-2) node{$7$};

\draw(11,1.5) node{$*_3$};

\draw[black, fill=white] (11,0) circle (0.3);
\draw[black, fill=white] (11,0) circle (0.2);
\draw(11,0) node{$3$};

\draw[double,>=stealth, ->](11,-0.5) -- (11,-1.5);

\draw[black, fill=white] (11,-2) circle (0.3);
\draw[black, fill=white] (11,-2) circle (0.2);
\draw(11,-2) node{$3$};

\draw(12,1.5) node{$*_4$};

\draw[black, fill=white] (12,0) circle (0.3);
\draw[black, fill=white] (12,0) circle (0.2);
\draw(12,0) node{$4$};

\draw[double,>=stealth, ->](12,-0.5) -- (12,-1.5);

\draw[black, fill=white] (12,-2) circle (0.3);
\draw[black, fill=white] (12,-2) circle (0.2);
\draw(12,-2) node{$4$};

\end{tikzpicture}
\end{center}
\end{exple}
\end{proof}

		\subsubsection{Relations for $\mathcal{H}^{pa}_k$}

We have: 

\begin{prop}The species $\mathcal{H}^{pa}_k$ satisfies the functional equation: 
\begin{equation}\mathcal{H}^{pa}_k=(\mathcal{H}^p_{k-1} -X) \circ \mathcal{H}^p_k.
\end{equation}
\end{prop}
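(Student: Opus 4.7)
The plan is to mimic the proof of Proposition \ref{eHak} for $\mathcal{H}^a_k$, keeping careful track of the extra root data carried by an edge-pointed rooted hypertree.

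First I would take an edge-pointed rooted hypertrees $k$-chain $S \in \mathcal{H}^{pa}_k$, with pointed edge $a$ and root $v \in a$ in its minimum. As in the $\mathcal{H}^a_k$ case, at each stage $l$ of the chain the pointed edge $a$ gives rise by fission to a set $D^l_e$ of distinguished edges; restricting to these distinguished edges produces a $k$-chain $D$ whose minimum has exactly one edge (namely $a$). Deleting $a$ in the minimum of $S$ breaks it into a forest $(h_1,\ldots,h_f)$ of hypertrees, each carrying a distinguished vertex $s_i \in a$. As in the proof of Proposition \ref{eHak}, the evolution of each $h_i$ through $S$ yields a rooted hypertrees $k$-chain $S_{h_i}$, and $S$ is recovered from $D$ by inserting $S_{h_i}$ at the vertex $s_i$ of $D$.

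The new ingredient compared with Proposition \ref{eHak} is that the root $v$ of $S$ is a vertex of $a$, hence a vertex of $D$. Since the root is preserved throughout the chain, the chain $D$ inherits a canonical root at $v$, making it a \emph{rooted} $k$-chain of hypertrees whose minimum has a single edge. The minimum of $D$ being the one-edge hypertree on its vertex set (which contains at least two vertices, since an edge has at least two vertices), $D$ is determined by the rooted $(k-1)$-chain $(D^2,\ldots,D^k)$ on a vertex set of cardinality $\geq 2$. Such objects are precisely the structures of the species $\mathcal{H}^p_{k-1}-X$ (the $-X$ removing the degenerate case of a single vertex, for which $\mathcal{H}^p_{k-1}$ reduces to $X$).

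Putting everything together, the assignment $S \mapsto (D, (S_{h_1},\ldots,S_{h_f}))$ is a species isomorphism
\begin{equation*}
\mathcal{H}^{pa}_k \cong (\mathcal{H}^p_{k-1}-X) \circ \mathcal{H}^p_k,
\end{equation*}
where on each vertex $s_i$ of $D$ one inserts the rooted $k$-chain $S_{h_i} \in \mathcal{H}^p_k$; in particular, the vertex of $D$ marked as the root of $D$ receives an element of $\mathcal{H}^p_k$ whose own root recovers the original root $v$ of $S$. The inverse map is constructed exactly as in the proof of Proposition \ref{eHak}, grafting each rooted $k$-chain at the corresponding vertex of $D$ and rebuilding the pointed edge from the one-edge minimum of $D$; no additional verification is needed beyond noting that the $\mathcal{H}^p$ rooting at the distinguished vertex of $D$ exactly matches the $pa$ rooting in $S$. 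The only delicate point—which is essentially the same as in Proposition \ref{eHak}—is the bookkeeping that the shift from $k$-chain with one-edge minimum to $(k-1)$-chain on $\geq 2$ vertices corresponds to the subtraction of $X$.
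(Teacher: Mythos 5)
Your proof is correct and follows essentially the same route as the paper's: the paper likewise reuses the decomposition of Proposition \ref{eHak} and notes that the root, being a vertex of the pointed edge, amounts to distinguishing a vertex of the $(k-1)$-chain $D$, which upgrades the outer factor from $\mathcal{H}_{k-1}-X$ to $\mathcal{H}^p_{k-1}-X$. Your additional bookkeeping (each component of the minimum minus the pointed edge contains exactly one vertex of that edge, so rooting $S$ at $v\in a$ corresponds exactly to rooting $D$) just makes explicit what the paper leaves implicit.
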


\begin{proof}
Forgetting the rootedness gives the decomposition of proposition \ref{eHak}.

Rooting edge-pointed hypertrees chain is the same as pointing out a vertex in the hypertrees $k-1$-chain. This gives the result.

\begin{exple}
An edge-pointed rooted hypertrees chain, seen as a rooted hypertrees chain, whose vertices are labelled by rooted hypertrees chains.

\begin{center}
\begin{tikzpicture}[scale=1]
\draw[black, fill=gray!40] (1,1) -- (1,0) -- (2,0) -- (2,1) -- (1,1);
\draw[black, fill=gray!40] (1.1,0.9) -- (1.1,0.1) -- (1.9,0.1) -- (1.9,0.9) -- (1.1,0.9);
\draw[black, fill=gray!40] (0,0) -- (1,1)--(0,1) -- (0,0);
\draw(3,1) -- (2,1);
\draw[black, fill=white] (1,0) circle (0.3);
\draw[black, fill=white] (0,0) circle (0.2);
\draw[black, fill=white] (0,1) circle (0.2);
\draw[black, fill=white] (1,1) circle (0.2);
\draw[black, fill=white] (1,0) circle (0.2);
\draw[black, fill=white] (2,0) circle (0.2);
\draw[black, fill=white] (2,1) circle (0.2);
\draw[black, fill=white] (3,1) circle (0.2);
\draw(0,0) node{$6$};
\draw(0,1) node{$5$};
\draw(1,1) node{$1$};
\draw(1,0) node{$3$};
\draw(2,1) node{$2$};
\draw(2,0) node{$4$};
\draw(3,1) node{$7$};

\draw[double,>=stealth, ->](1.5,-0.5) -- (1.5,-1.5);

\draw(0,-2) -- (1,-2);
\draw(0,-3) -- (1,-2);
\draw(3,-2) -- (2,-2);
\draw(1,-3) -- (1,-2);
\draw(1,-3) -- (2,-3);
\draw(2,-2) -- (2,-3);
\draw[black, fill=white] (1,-3) circle (0.3);
\draw[black, fill=white] (0,-3) circle (0.2);
\draw[black, fill=white] (0,-2) circle (0.2);
\draw[black, fill=white] (1,-2) circle (0.2);
\draw[black, fill=white] (1,-3) circle (0.2);
\draw[black, fill=white] (2,-3) circle (0.2);
\draw[black, fill=white] (2,-2) circle (0.2);
\draw[black, fill=white] (3,-2) circle (0.2);
\draw(0,-3) node{$6$};
\draw(0,-2) node{$5$};
\draw(1,-2) node{$1$};
\draw(1,-3) node{$3$};
\draw(2,-2) node{$2$};
\draw(2,-3) node{$4$};
\draw(3,-2) node{$7$};

\draw[double distance = 2pt,>=stealth, <->](3.25,-1) -- (4.75,-1);

\draw[black, fill=gray!40] (5,1) -- (5,0) -- (6,0) -- (6,1) -- (5,1);
\draw[black, fill=gray!40] (5.1,0.9) -- (5.1,0.1) -- (5.9,0.1) -- (5.9,0.9) -- (5.1,0.9);
\draw[black, fill=white] (5,0) circle (0.4);
\draw[black, fill=white] (5,1) circle (0.3);
\draw[black, fill=white] (5,0) circle (0.3);
\draw[black, fill=white] (6,0) circle (0.3);
\draw[black, fill=white] (6,1) circle (0.3);
\draw(5,1) node{$*_1$};
\draw(5,0) node{$*_3$};
\draw(6,1) node{$*_2$};
\draw(6,0) node{$*_4$};

\draw[double,>=stealth, ->](5.5,-0.5) -- (5.5,-1.5);

\draw(5,-2) -- (5,-3);
\draw(6,-3) -- (5,-3);
\draw(6,-2) -- (6,-3);
\draw[black, fill=white] (5,-3) circle (0.4);
\draw[black, fill=white] (5,-2) circle (0.3);
\draw[black, fill=white] (5,-3) circle (0.3);
\draw[black, fill=white] (6,-3) circle (0.3);
\draw[black, fill=white] (6,-2) circle (0.3);
\draw(5,-2) node{$*_1$};
\draw(5,-3) node{$*_3$};
\draw(6,-2) node{$*_2$};
\draw(6,-3) node{$*_4$};

\draw(7,-1) node{with};

\draw(8.5,1.5) node{$*_1$};

\draw[black, fill=gray!40] (8,0) -- (9,1)--(8,1) -- (8,0);
\draw[black, fill=white] (9,1) circle (0.3);
\draw[black, fill=white] (8,0) circle (0.2);
\draw[black, fill=white] (8,1) circle (0.2);
\draw[black, fill=white] (9,1) circle (0.2);
\draw(8,0) node{$6$};
\draw(8,1) node{$5$};
\draw(9,1) node{$1$};

\draw[double,>=stealth, ->](8.5,-0.5) -- (8.5,-1.5);

\draw (8,-3) -- (9,-2);
\draw (8,-2) -- (9,-2);
\draw[black, fill=white] (8,-3) circle (0.2);
\draw[black, fill=white] (8,-2) circle (0.2);
\draw[black, fill=white] (9,-2) circle (0.3);
\draw[black, fill=white] (9,-2) circle (0.2);
\draw(8,-3) node{$6$};
\draw(8,-2) node{$5$};
\draw(9,-2) node{$1$};

\draw(10,1.5) node{$*_2$};

\draw(10,0) -- (10,1);
\draw[black, fill=white] (10,0) circle (0.3);
\draw[black, fill=white] (10,0) circle (0.2);
\draw[black, fill=white] (10,1) circle (0.2);
\draw(10,0) node{$2$};
\draw(10,1) node{$7$};

\draw[double,>=stealth, ->](10,-0.5) -- (10,-1.5);

\draw(10,-2) -- (10,-3);
\draw[black, fill=white] (10,-3) circle (0.3);
\draw[black, fill=white] (10,-3) circle (0.2);
\draw[black, fill=white] (10,-2) circle (0.2);
\draw(10,-3) node{$2$};
\draw(10,-2) node{$7$};

\draw(11,1.5) node{$*_3$};

\draw[black, fill=white] (11,0) circle (0.3);
\draw[black, fill=white] (11,0) circle (0.2);
\draw(11,0) node{$3$};

\draw[double,>=stealth, ->](11,-0.5) -- (11,-1.5);

\draw[black, fill=white] (11,-2) circle (0.3);
\draw[black, fill=white] (11,-2) circle (0.2);
\draw(11,-2) node{$3$};

\draw(12,1.5) node{$*_4$};

\draw[black, fill=white] (12,0) circle (0.3);
\draw[black, fill=white] (12,0) circle (0.2);
\draw(12,0) node{$4$};

\draw[double,>=stealth, ->](12,-0.5) -- (12,-1.5);

\draw[black, fill=white] (12,-2) circle (0.3);
\draw[black, fill=white] (12,-2) circle (0.2);
\draw(12,-2) node{$4$};
\end{tikzpicture}
\end{center}
\end{exple}

\end{proof}

		\subsubsection{Relations for $\mathcal{H}_k$}

Rootedness gives the following proposition: 

\begin{prop} The species $\mathcal{H}_k$ satisfies: 
\begin{equation} \label{ediff}
X \times \mathcal{H}_k'=\mathcal{H}^p_k,
\end{equation}
where $'$ is species differentiation.
\end{prop}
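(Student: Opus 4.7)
The plan is to unfold the definitions of species differentiation and of the product by $X$, and then exhibit a straightforward bijection that respects the symmetric group action.

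First, I recall that the derivative $F'$ of a species is defined by $F'(I) = F(I \sqcup \{*\})$, where $*$ is a ``ghost'' point not in $I$. Hence, evaluating the left-hand side on a finite set $I$ gives
\begin{equation*}
(X \times \mathcal{H}_k')(I) \;=\; \bigsqcup_{v \in I} \mathcal{H}_k\bigl((I \setminus \{v\}) \sqcup \{*\}\bigr),
\end{equation*}
that is, the data of a distinguished vertex $v \in I$ together with a large $k$-chain of hypertrees on the set $(I \setminus \{v\}) \sqcup \{*\}$.

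Next, I would construct the natural bijection with $\mathcal{H}^p_k(I)$. Given such a pair $(v, C)$, relabel the ghost point $*$ by $v$: since $v \notin I \setminus \{v\}$, this produces a large $k$-chain on $I$ in which the vertex $v$ is singled out. Declaring $v$ to be the root of the minimum of the chain gives an element of $\mathcal{H}^p_k(I)$. Conversely, given a large rooted $k$-chain in $\mathcal{H}^p_k(I)$, the root $v$ of the minimum is a vertex that belongs to the (common) vertex set of every hypertree in the chain; removing $v$ from the label set and relabelling the freed slot by $*$ returns a pair in $(X \times \mathcal{H}_k')(I)$. The two operations are clearly inverse to one another.

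Finally, I would note that this construction is natural in $I$: any bijection $I \to J$ transports the data on both sides in a compatible way, since the relabelling only exchanges the name of the distinguished vertex with the symbol $*$. Hence the bijection is in fact a species isomorphism, which proves \eqref{ediff}. There is no substantial obstacle here; the only thing to be slightly careful about is the observation that the vertex set is common to every hypertree in a chain, so that the root of the minimum can unambiguously be identified with a vertex available in every subsequent hypertree of the chain.
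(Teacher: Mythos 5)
Your proof is correct and is exactly the argument the paper has in mind: the paper simply invokes ``rootedness'', i.e.\ the standard identity that distinguishing a vertex of an $F$-structure corresponds to $X\times F'$, and you have spelled out that bijection (relabelling the ghost point by the chosen vertex and rooting the minimum, hence the whole chain, at it) together with its naturality. Your remark that the vertex set is common to all hypertrees of a chain, so the root of the minimum is a root for every term of the chain, is precisely the observation the paper makes when defining $\mathcal{H}^p_k$.
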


	\subsection{Back to strict and large chains}
	
The rootedness of a chain does not change the polynomial nature of the character, shown in section \ref{chLarg}. Consequently, generating series and cycle index of  $\mathcal{H}^{p}$ are polynomial in $k$.

Moreover, as the substitution of formal power series with polynomial coefficients is a formal power series with polynomial coefficients, generating series and cycle indices associated with $\mathcal{H}^a$, $\mathcal{H}^{pa}$, $\mathcal{H}^{c}$ and $\mathcal{H}^{cm}$ are polynomial in $k$.

Consequently, for all considered species, we can take the value of cycle index in $-1$ and this will give the character of symmetric group on the homology associated with pointed hypertrees poset.

\section{Dimension of the poset homology }

Generating series associated with species $\mathcal{H}_k$, $\mathcal{H}_k^p$, $\mathcal{H}_k^a$, $\mathcal{H}_k^{pa}$, $\mathcal{H}_k^c$ and $\mathcal{H}_k^{cm}$ are denoted by $\Sk$, $\Spk$, $\Sak$, $\Spak$, $\Sck$ and $\Scuck$. We compute them here.

	\subsection{Connections between generating series}
	
The equalities between species of part \ref{rel esp} give equalities in terms of generating series: 

\begin{prop}
The series $\Spk$ satisfies: 
\begin{equation}\label{Spk}
\Spk=x \times \exp \left( \frac{\Spkmu \circ \Spk}{\Spk}-1 \right).
\end{equation}
The series $\Sak$ satisfies: 
\begin{equation} \label{rel Sak}
\Sak =(\Skmu-x)(\Spk).
\end{equation}
The series $\Spak$ satisfies: 
\begin{equation} \label{rel Spak}
\Spak=(\Spkmu-x)(\Spk).
\end{equation}
The series $\Sk$ satisfies: 
\begin{equation} \label{ediff}
x \times \Sk'=\Spk.
\end{equation}
Moreover, according to the dissymmetry principle of proposition \ref{principe de dissymétrie}, these series also satisfy:
\begin{equation}
\Sk+\Spak=\Spk+\Sak.
\end{equation}

\end{prop}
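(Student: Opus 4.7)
The plan is to translate each species identity from Section~\ref{rel esp} into a generating-series identity using the standard dictionary: disjoint sum of species corresponds to addition of generating series, Cartesian product to ordinary multiplication, substitution to functional composition, and species differentiation to formal derivative. The species $X$ has generating series $x$, and the species $\operatorname{Comm}$ of non-empty sets has generating series $\exp(x)-1$. Since by the previous subsection all the relevant cycle indices, and in particular the generating series, are polynomial in $k$, these manipulations make sense for every integer $k$.

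For equation~\eqref{Spk}, I start from the corollary~\eqref{eHpk} of Proposition~\ref{d�comp}, namely
\begin{equation*}
\mathcal{H}^p_k = X \times \operatorname{Comm} \circ \left( \frac{\mathcal{H}^p_{k-1}-X}{X} \circ \mathcal{H}^p_k \right) + X.
\end{equation*}
Passing to generating series gives $\Spk = x\bigl(\exp(G)-1\bigr) + x$, where $G$ is the series of $\frac{\mathcal{H}^p_{k-1}-X}{X} \circ \mathcal{H}^p_k$. A direct computation shows that $G = \frac{\Spkmu \circ \Spk}{\Spk} - 1$, and the elementary simplification $x\bigl(\exp(G)-1\bigr) + x = x\exp(G)$ then produces the claimed formula. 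The quotient $\frac{\mathcal{H}^p_{k-1}-X}{X}$ is a well-defined species because the only rooted hypertree on one vertex is $X$, so $\mathcal{H}^p_{k-1} - X$ is divisible by $X$.

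Equations~\eqref{rel Sak} and~\eqref{rel Spak} follow by translating Proposition~\ref{eHak} and its analogue for $\mathcal{H}^{pa}_k$ term by term, interpreting species substitution as composition of power series. Equation~\eqref{ediff} is the direct image of the species relation $X \times \mathcal{H}'_k = \mathcal{H}^p_k$, since species differentiation corresponds to formal differentiation of generating series. The final identity $\Sk + \Spak = \Spk + \Sak$ is just the generating-series image of the dissymmetry principle, Proposition~\ref{principe de dissym�trie}.

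There is no real obstacle in this argument: all the genuine combinatorial work has been carried out at the species level in Section~\ref{rel esp}, and what remains is essentially bookkeeping with the series-composition dictionary. The only point requiring a small verification is the identification of $\operatorname{Comm}$ with $\exp(x)-1$ and the handling of the fractional species $\frac{\mathcal{H}^p_{k-1}-X}{X}$, both of which are routine.
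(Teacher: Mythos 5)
Your proposal is correct and matches the paper's (essentially omitted) argument: the paper simply states that the species identities of Section \ref{rel esp} yield these generating-series identities via the standard dictionary, which is exactly the translation you carry out, including the correct handling of $\operatorname{Comm}\mapsto e^x-1$ and of the quotient species $\frac{\mathcal{H}^p_{k-1}-X}{X}$.
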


	\subsection{Values of the series for $k=0$ and $k=-1$}

		\subsubsection{Computation of $\So$ and $\Spo$}
		
		There is only one hypertrees $0$-chain: the empty chain. This gives: 
		
		\begin{equation}\label{So}
		\So=\sum_{n \geq 1} \frac{x^n}{n!}=e^x-1.
		\end{equation}
		
		Relation \eqref{ediff} gives: 
		
		\begin{equation} \label{Spo}
		\Spo=x e^x.
		\end{equation}
		
		\subsubsection{Computation of $\Smu$}
		
		Using proposition \ref{mu}, it is sufficient to study the value in $-1$ of the polynomial whose value in $k$ gives the number of large $k$-chains to obtain the dimension on the homology group. Therefore we study the value in $-1$ of the exponential generating series whose coefficients are these polynomials.
		The series $\Smu$ is given by the following theorem. This result was first proved by McCammond and Meier in \cite{McCM}. We give here another proof: 
		
		\begin{thm}\cite[theorem 5.1]{McCM} \label{thm smu}
		The dimension of the only non trivial homology group of the poset of hypertrees on $n$ vertices is ${(n-1)}^{n-2}$.
		\end{thm}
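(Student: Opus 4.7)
The plan is to compute the exponential generating series $\Smu$ in closed form and then invoke Proposition \ref{mu}. Throughout I use the polynomiality in $k$ established in the previous section, which makes all specialisations at $k=-1$ well defined.

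I would first obtain a closed form for $\Spmu$ by specialising the functional equation \eqref{Spk} at $k=0$ and inserting the explicit value $\Spo = xe^x$ from \eqref{Spo}. Taking logarithms in
\[
xe^x \;=\; x\exp\!\left(\frac{\Spmu(xe^x)}{xe^x}-1\right)
\]
yields $\Spmu(xe^x) = xe^x(1+x)$. Next, combining the dissymmetry principle for hypertree chains at $k=0$ with the composition identities \eqref{rel Sak} and \eqref{rel Spak} and subtracting gives
\[
\Smu(xe^x) \;=\; \Spmu(xe^x) + \So - \Spo \;=\; (1+x^2)\,e^x - 1.
\]
If $W$ denotes the compositional inverse of $x \mapsto xe^x$ (so $W(y)e^{W(y)}=y$), substituting $y=xe^x$ and using $e^{W(y)} = y/W(y)$ yields the closed form
\[
\Smu(y) \;=\; y\,W(y) + \frac{y}{W(y)} - 1.
\]

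The final step is coefficient extraction via the Lagrange inversion formula $[y^n]\phi(W(y)) = \tfrac{1}{n}[x^{n-1}]\phi'(x)\,e^{-nx}$, applied with $\phi(u) = u^2 e^u$ (for $yW(y) = W(y)^2 e^{W(y)}$) and $\phi(u) = e^u$ (for $y/W(y) = e^{W(y)}$). Summing the two resulting expressions and collapsing them by means of the elementary identity $n(n-1)^{n-2} - (n-1)^{n-1} = (n-1)^{n-2}$ gives, for $n \ge 2$,
\[
n!\,[y^n]\,\Smu(y) \;=\; (-1)^n (n-1)^{n-2}.
\]
Invoking Proposition \ref{mu} and absorbing the sign $(-1)^n$ into the top-degree location of the homology of $\widehat{\operatorname{HT_n}}$ yields the claimed dimension $(n-1)^{n-2}$.

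The main obstacle is the bookkeeping in the Lagrange step: one must carry through the alternating signs in both summands and witness the clean cancellation that produces $(n-1)^{n-2}$, then reconcile the resulting sign with the Cohen--Macaulay degree convention implicit in Proposition \ref{mu}. Everything else is either a specialisation of earlier functional equations or the mechanical inversion of $x \mapsto xe^x$.
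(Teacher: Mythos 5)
Your argument is correct, and it reaches the result by a route that is genuinely different in mechanism from the paper's, even though both rest on the dissymmetry principle at $k=0$ and on the compositional inverse of $xe^x$. The paper never solves for $\Spmu$ from the functional equation \eqref{Spk}: it instead eliminates $\Spmu$ via the differentiation relation $x\,\Smu'=\Spmu$, so that dissymmetry yields the first-order relation $(\Smu-x\Smu')\circ xe^x=e^x-xe^x-1$; after composing with $\SW$ this is solved coefficientwise, the auxiliary series $e^{\SW}-x-1$ being identified by the differentiation argument of lemma \ref{thm equa}. You instead extract $\Spmu(xe^x)=xe^x(1+x)$ directly from \eqref{Spk} at $k=0$ --- which is precisely the generating-series shadow of the step $\Zpmu=p_1(\spl+1)$ carried out later in the proof of theorem \ref{Zmu} --- and then dissymmetry together with \eqref{rel Sak} and \eqref{rel Spak} gives the closed form $\Smu(xe^x)=(1+x^2)e^x-1$, i.e.\ $\Smu=y\SW+e^{\SW}-1$, whose coefficients you obtain by Lagrange inversion; I checked that your two Lagrange contributions do collapse to $n!\,[y^n]\Smu=(-1)^n(n-1)^{n-2}$, in agreement with the coefficients $a_n$ of the paper, and your closed form is consistent with corollary \ref{smusw} since it differentiates to $1+\SW$. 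What the paper's route buys is that it bypasses Lagrange inversion and produces corollary \ref{smusw} as a by-product, which is then reused to compute $\Sao$ and $\Spao$; what your route buys is an explicit formula for $\Smu$ itself and an argument whose structure anticipates the equivariant cycle-index computation. Your concluding sign reconciliation via proposition \ref{mu} and the top-degree (Cohen--Macaulay) convention is handled at the same level of explicitness as in the paper, so I see no gap.
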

		
		\begin{proof}
		
		According to equations \eqref{rel Sak} and \eqref{rel Spak}, applied at $k=0$, the dissymmetry principle of corollary \ref{principe de dissymétrie} is:

		$$\begin{array}{rcl}
		\So-\Spo&=&\Sao-\Spao \\
		&=&(\Smu-\Spmu) \circ \Spo.
		\end{array}$$
		
		With equations \eqref{ediff}, \eqref{So} and \eqref{Spo}, this equality is equivalent to: 
		\begin{equation} \label{eq prec}
(\Smu-x\Smu') \circ xe^x=e^x-xe^x-1.
		\end{equation}
		
		We define a new series:
		\begin{defi}
		Let $\SW$ be the series given by: 
		\begin{equation*}
		\SW(x)=\sum_{n \geq 1} (-1)^{n-1}n^{n-1} \frac{x^n}{n!}.
		\end{equation*}
		This series is the suspension of the generating series $W$ of rooted hypertrees species, associated with the $\operatorname{PreLie}$ operad. It satisfies the following equation, obtained from the decomposition of rooted trees (see \cite[page 2]{BLL} for instance): 
		\begin{equation*}
		\SW(x) e^{\SW(x)}=x.
		\end{equation*}
		We compute its differential: 
		\begin{equation*}
		(\SW)'(x)=\frac{1}{x+e^\SW}.
		\end{equation*}
		\end{defi}

		Composing equation \eqref{eq prec} by $\SW$, we get: 
		\begin{equation*} \label{eSmuSpmu}
		\Smu-x\Smu'=e^\SW-x-1.
		\end{equation*}
	
		To conclude, we need the following lemma:
		
		\begin{lem} \label{thm equa}
		Computing the term $e^\SW-x-1$ gives:
		\begin{equation*}
        e^\SW-x-1=\sum_{n \geq 2}  (-1)^{n-1} (n-1)^{n-1} \frac{x^n}{n!}.
        \end{equation*}
		\end{lem}	
		
		\begin{proof} [Proof of the lemma \ref{thm equa}]
		Both parts of the equation vanish at $0$.
		
		On the one hand, differentiation gives: 
		\begin{equation*}
		(e^\SW-x-1)'=\SW'e^\SW-1=\frac{e^\SW-x-e^\SW}{x+e^\SW}=-x\SW'.
		\end{equation*}
		
		On the other hand, we get: 
		\begin{equation*}
		\left( \sum_{n \geq 2}  (-1)^{n-1} (n-1)^{n-1} \frac{x^n}{n!} \right)'=\sum_{n \geq 2}  (-1)^{n-1} (n-1)^{n-1} \frac{x^{n-1}}{(n-1)!}.
		\end{equation*}
		
		It gives: 
		
		\begin{equation*}
		\left(\sum_{n \geq 2}  (-1)^{n-1} (n-1)^{n-1} \frac{x^n}{n!} \right)'=\sum_{n \geq 1}  (-1)^{n} n^{n} \frac{x^n}{n!}=-x\SW'.
		\end{equation*}
		
		The derivatives of these formal series are the same and they both vanish at $0$, so they are equal.
		\end{proof}	
		
		We conclude thanks to lemma \ref{thm equa}, by considering $\Smu=\sum_{n \geq 1} a_n \frac{x^n}{n!}$. Thus coefficients $a_n$ satisfy, for all integers $n>0$: 
		\begin{equation*}
		a_n-na_n=-(n-1)a_n=(-1)^{n-1}(n-1)^{n-1}.
		\end{equation*}

		\end{proof}
		
		\begin{cor} \label{smusw} The derivative of series $\Smu$ is given by: 
		\begin{equation*}
		(\Smu - x)' = \SW.
		\end{equation*}
		\end{cor}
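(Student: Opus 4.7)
The plan is to read off the coefficients of $\Smu$ from the formula obtained during the proof of Theorem~\ref{thm smu} and then identify the derivative of $\Smu - x$ with $\SW$ by direct termwise comparison. Writing $\Smu = \sum_{n \geq 1} a_n x^n/n!$, the identity $\Smu - x\Smu' = e^{\SW} - x - 1$ combined with Lemma~\ref{thm equa} gives the relation $(1-n)\,a_n = (-1)^{n-1}(n-1)^{n-1}$, hence $a_n = (-1)^n (n-1)^{n-2}$ for every $n \geq 2$.

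The case $n = 1$ must be handled separately, because at $n = 1$ the relation above collapses to the trivial identity $0 = 0$ and therefore does not pin down $a_1$. I would fix $a_1$ by a direct enumerative argument: on a one-element vertex set the poset $\operatorname{HT}_1$ has a unique hypertree, so there is exactly one large $k$-chain for every integer $k$. The polynomial $P_1$ from Proposition~\ref{mu} is thus constantly equal to $1$, and in particular $a_1 = P_1(-1) = 1$.

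With $a_1 = 1$ in hand, the subtraction $\Smu - x$ exactly kills the degree one term, leaving $\Smu - x = \sum_{n \geq 2} (-1)^n (n-1)^{n-2} x^n/n!$. Differentiating termwise and reindexing by $m = n - 1$ produces $(\Smu - x)' = \sum_{m \geq 1} (-1)^{m-1} m^{m-1} x^m/m!$, which is by definition the series $\SW$ introduced before Lemma~\ref{thm equa}, so the corollary follows.

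The only genuinely delicate point is the handling of $a_1$: the differential equation used in the proof of Theorem~\ref{thm smu} is blind to this coefficient, so it must be fixed by the separate one-vertex count. Once that is done, the corollary reduces to an entirely formal manipulation of power series.
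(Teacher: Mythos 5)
Your proof is correct and takes essentially the same route as the paper: read off $a_n=(-1)^n(n-1)^{n-2}$ for $n\geq 2$ from the relation established in Theorem \ref{thm smu}, then differentiate $\Smu - x$ termwise and reindex to recognize $\SW$. Your separate determination of $a_1=1$ by counting the unique large $k$-chain on one vertex is a point the paper leaves implicit, and it is indeed needed to justify that $\Smu - x$ starts in degree $2$.
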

		
		\begin{proof}
		We differentiate the expression of $\Smu$ obtained in the previous theorem: 
		
		\begin{equation*}
		(\Smu - x)' = \sum_{n \geq 2} (-1)^n (n-1)^{n-2} \frac{x^{n-1}}{(n-1)!}.
		\end{equation*}
		This gives the result.
		\end{proof}
		
		\subsubsection{Back to $\Sao$ and $\Spao$}	
		
		The series $\Sao$ and $\Spao$ are given by the following proposition.
		
		\begin{prop} \label{SaoSpao}
		\begin{enumerate}		
		\item The series $\Sao$ satisfies: 
		\begin{equation} \label{Sao}
		\Sao=\sum_{n \geq 2} (n-1)^2 \frac{x^n}{n!}.
		\end{equation}
		\item The series $\Spao$ satisfies: 
		\begin{equation} \label{Spao}
		\Spao=\sum_{n \geq 2} n(n-1) \frac{x^n}{n!}.
		\end{equation}
		\end{enumerate}
		\end{prop}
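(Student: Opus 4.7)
My plan is to derive the two series in turn: first $\Sao$ by differentiating the compositional identity \eqref{rel Sak}, and then $\Spao$ by appealing to the already-established dissymmetry principle.

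Specialising \eqref{rel Sak} to $k = 0$ yields $\Sao = (\Smu - x) \circ \Spo$, into which I substitute $\Spo = xe^x$ from \eqref{Spo}. Differentiating via the chain rule and using Corollary \ref{smusw}, which asserts $(\Smu - x)' = \SW$, gives
\begin{equation*}
\Sao' = \SW(xe^x) \cdot (xe^x)'.
\end{equation*}
The key simplification is the identity $\SW(xe^x) = x$: the defining functional equation $\SW(y) e^{\SW(y)} = y$ admits a unique formal power series solution with zero constant term, and substituting $y = xe^x$ together with the ansatz $\SW(xe^x) = x$ satisfies it. Therefore $\Sao' = x(e^x + xe^x) = xe^x + x^2 e^x$. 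Integrating with initial condition $\Sao(0) = 0$ (an edge-pointed hypertree requires at least two vertices, so $\Sao$ has no constant or linear term) yields $\Sao = (x^2 - x + 1)e^x - 1$. Reading off the coefficient of $x^n/n!$ in this closed form gives $n(n-1) - n + 1 = (n-1)^2$ for $n \geq 2$, establishing part (1).

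For part (2), I invoke the dissymmetry principle for hypertree chains (Proposition \ref{principe de dissym�trie}) at $k = 0$, which reads $\So + \Spao = \Spo + \Sao$. Solving for $\Spao$ and substituting $\So = e^x - 1$ from \eqref{So}, $\Spo = xe^x$ from \eqref{Spo}, and the formula for $\Sao$ just obtained, the $e^x$ contributions collapse to give $\Spao = x^2 e^x$. Expanding this series as $\sum_{n \geq 2} n(n-1) \frac{x^n}{n!}$ completes the proof.

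The only non-routine step is the identity $\SW(xe^x) = x$, which converts the differentiated composition into a polynomial expression. Once this is in hand, the argument reduces to elementary manipulation of exponential generating series together with an appeal to the already-proven dissymmetry principle.
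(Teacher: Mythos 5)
Your proof is correct. Part (1) is essentially the paper's own argument: specialise \eqref{rel Sak} at $k=0$, differentiate, apply Corollary \ref{smusw}, and use the fact that $\SW$ is the compositional inverse of $xe^x$ so that $\SW \circ xe^x = x$; the paper stops at $(\Sao)'=x(x+1)e^x=\sum_{n\geq 1}n^2\frac{x^n}{n!}$ and matches coefficients, whereas you integrate to the closed form $(x^2-x+1)e^x-1$, which is the same computation. For part (2) you deviate from the paper: the paper derives $\Spao$ independently from \eqref{rel Spak}, writing $\Spmu-x=x(\Smu'-1)=x\SW$ via \eqref{ediff} and Corollary \ref{smusw}, and composing with $\Spo=xe^x$ to get $x^2e^x$; you instead solve the dissymmetry identity $\So+\Spao=\Spo+\Sao$ at $k=0$ using \eqref{So}, \eqref{Spo} and your formula for $\Sao$. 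Your route is legitimate and slightly more economical, since the dissymmetry principle is established bijectively and independently, so no circularity arises; what it costs is that $\Spao$ is no longer computed independently of $\Sao$ (the paper's two independent derivations give, in passing, a consistency check against the dissymmetry relation, and mirror the scheme reused later for the cycle indices $\Zao$ and $\Zpao$).
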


		\begin{proof}		
		According to equations \eqref{rel Sak} and \eqref{Spo}, $\Sao$ satisfies: 
		\begin{equation*}		
		\Sao(x)=(\Smu -x) \circ \Spo(x)=(\Smu - x) \circ xe^x.
		\end{equation*}		
		 Differentiating this equality and using corollary \ref{smusw}, we get: 
		 \begin{equation*} (\Sao)'(x)=\SW \circ xe^x \times (x+1)e^x=x(x+1)e^x.
		\end{equation*}		 
		  So it gives:
		  \begin{equation*}(\Sao)'(x)= \sum_{n \geq 1} n^2 \frac{x^n}{n!}.
		  \end{equation*}
		
		As $\Sao(0)=0$, we obtain the first result. 
		
		According to equations \eqref{rel Spak} and \eqref{Spo}, $\Spao$ satisfies: 
		\begin{equation*}
		\Spao(x)=(\Spmu -x) \circ \Spo(x)=(x(\Smu'-1)) \circ xe^x.
		\end{equation*}
		 With corollary  \ref{smusw}, we get: 
		 \begin{equation*} 
		 \Spao(x)=(x\SW) \circ xe^x =x^2e^x.
		 \end{equation*}
		 This gives the second result. 
		
		\end{proof}
		
\section{Action of the symmetric group on the poset homology}
	
The reader may consult the appendix \ref{Annexe ind cycl} for basic definitions on cycle index and the appendix \ref{rappel espèces} for definitions of usual species used in this section and the following.	
	
	\subsection{Description of the action}
	
	Let us consider a hypertree poset on $n$ vertices, as described previously. The symmetric group acts on the set of vertices by permutation. This action preserves number of edges and poset order, so it induces an action on the homology associated with poset $\widehat{\operatorname{HT_n}}$. We will determine in this section the character of this action on poset homology.

	In the following, $\Zk$, $\Zpk$, $\Zak$ and $\Zpak$ will stand for cycle indices associated with species $\mathcal{H}_k$, $\mathcal{H}^p_k$, $\mathcal{H}^a_k$ and $\mathcal{H}^{pa}_k$.
	
	\subsection{Connection between cycle indices}
	
	Relations between species of section \ref{rel esp} give the following proposition: 
	
	\begin{prop} The series $\Zk$, $\Zpk$, $\Zak$ and $\Zpak$ satisfy the following relations: 
	
	\begin{equation} \label{rel diss Z}
	\Zk+\Zpak=\Zpk+\Zak,
	\end{equation}
	
	\begin{equation} \label{Zpk}
	\Zpk = p_1 + p_1 \times \comm \circ \left(\frac{\Zpkmu \circ \Zpk - \Zpk}{\Zpk} \right),
	\end{equation}
	
	\begin{equation} \label{Zak}
	\Zak + \Zpk = \Zkmu \circ \Zpk,
	\end{equation}
	
	\begin{equation} \label{Zpak}
	\Zpak + \Zpk=\Zpkmu \circ \Zpk,
	\end{equation}

and
	
	\begin{equation} \label{der}
	p_1 \frac{\partial \Zk }{\partial p_1} = \Zpk.
	\end{equation}
	
	\end{prop}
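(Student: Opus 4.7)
The plan is simply to apply the cycle index functor to each of the species identities established in Section~\ref{rel esp}. Recall that the cycle index $Z_{-}$ is additive, multiplicative, and turns species substitution into plethystic substitution of symmetric functions; moreover $Z_{X}=p_1$ and $Z_{F'} = \partial Z_F/\partial p_1$. Granted these compatibilities, each of the five identities will be a direct translation.

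For \eqref{rel diss Z}, I would take the cycle index of the dissymmetry identity of Proposition~\ref{principe de dissym�trie}, $\mathcal{H}_k+\mathcal{H}^{pa}_k=\mathcal{H}^p_k+\mathcal{H}^a_k$, and invoke additivity. For \eqref{Zak} and \eqref{Zpak}, I would apply $Z$ to the functional equations $\mathcal{H}^a_k=(\mathcal{H}_{k-1}-X)\circ\mathcal{H}^p_k$ and $\mathcal{H}^{pa}_k=(\mathcal{H}^p_{k-1}-X)\circ\mathcal{H}^p_k$ respectively, using that $Z_{F\circ G}=Z_F\circ Z_G$ and $Z_X=p_1$, then rearrange to put $\Zpk$ on the left.

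For \eqref{Zpk}, I would start from Corollary/Equation~\eqref{eHpk}, $\mathcal{H}^p_k=X\times\operatorname{Comm}\circ\bigl(\tfrac{\mathcal{H}^p_{k-1}-X}{X}\circ\mathcal{H}^p_k\bigr)+X$. Taking cycle indices yields
\[
\Zpk = p_1 + p_1\times \comm\circ\left(\frac{\Zpkmu\circ\Zpk-\Zpk}{\Zpk}\right),
\]
once one observes that $Z_{X\times F}=p_1\,Z_F$ (products of species on a singleton factor translate to multiplication by $p_1$), and that $Z$ respects the quotient $(\mathcal{H}^p_{k-1}-X)/X$ arising from the division by a singleton factor, which is meaningful because $\mathcal{H}^p_{k-1}-X$ is divisible by $X$ at the species level. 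Finally, \eqref{der} is the translation of $X\times\mathcal{H}_k'=\mathcal{H}^p_k$ via $Z_{F'}=\partial Z_F/\partial p_1$.

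The only step that requires real care, rather than being mechanical bookkeeping, is the justification of the quotient $\tfrac{\Zpkmu\circ\Zpk-\Zpk}{\Zpk}$ on the level of symmetric functions in \eqref{Zpk}: one must check that the species $\mathcal{H}^p_{k-1}-X$ is genuinely of the form $X\cdot \mathcal{F}$ so that its cycle index is divisible by $p_1$ (equivalently, by $Z_{\mathcal{H}^p_k}$ after plethysm), which follows from the factorisation leading to Equation~\eqref{eHpk}. All other manipulations are immediate applications of the standard rules for cycle indices.
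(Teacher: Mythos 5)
Your proposal is correct and matches the paper's (largely implicit) argument: the proposition is obtained precisely by applying the cycle-index functor to the species identities of Section \ref{rel esp}, using additivity, multiplicativity, compatibility with substitution and differentiation, and $\mathbf{C}_X=p_1$. Your extra remark justifying the quotient in \eqref{Zpk} via the factorisation $\mathcal{H}^p_{k-1}-X=X\times(\operatorname{Comm}\circ\mathcal{H}^c_{k-1})$ is exactly the point that makes the division by $p_1$ (and hence, after plethysm by $\Zpk$, by $\Zpk$) legitimate.
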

	
	This relations holds on $\mathbb{Z}$. Indeed the coefficients of the $x_n$ are polynomial in $k$, so we can extend the previous relations holding on $\mathbb{N}$ to $\mathbb{Z}$.	
	
	\subsection{Computation of the symmetric group character}
	
		\subsubsection{Computation of $\Zmu$}
		
		Using proposition \ref{mu}, it is sufficient to study the value in $-1$ of the polynomial whose value in $k$ gives the character of the action of symmetric group on large $k$-chains to obtain the character on the homology group. Therefore we study the value in $-1$ of the exponential generating series whose coefficients are these polynomials.
		
		The $\operatorname{PreLie}$ operad is anti-cyclic as proven in the article of F. Chapoton \cite{ChAOp}. It means that the usual action of the symmetric group $\mathfrak{S}_n$ on the module $\operatorname{PreLie}(n)$, whose basis is the set of rooted trees, can be extended into an action of the symmetric group $\mathfrak{S}_{n+1}$. We write $M$ for the cycle index associated with this anti-cyclic structure.
		
		The reader may consult the article \cite[part 5.4]{ChHyp} for more information on this series. 
						
		We will prove the following theorem, which describes the action of symmetric group on the homology of hypertree poset in terms of cycle indices associated with the $\operatorname{Comm}$ and $\operatorname{PreLie}$ operads: 
		
		\begin{thm}\label{Zmu}
		The cycle index $\Zmu$, which gives the character of the action of the symmetric group on the homology of the hypertree poset, is related to the cycle index $M$ associated with the anti-cyclic structure of  $\operatorname{PreLie}$ operad by: 
		\begin{equation}
		\Zmu=p_1 - \Sigma M= \comm \circ \spl+ p_1 \left( \spl + 1 \right).
		\end{equation}
		The cycle index $\Zpmu$ is given by: 
		\begin{equation} \label{Zpmu}
		\Zpmu=p_1 \left( \spl + 1 \right).
		\end{equation}
		\end{thm}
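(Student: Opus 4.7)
The plan is to exploit the plethystic identities from Section 4.2 at $k=0$, together with the cycle-index lifts of the generating series computed in Section 3.2, and then solve for $\Zpmu$ and $\Zmu$ by plethystic inversion.

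First, I would pin down $\Zpao$ at the cycle-index level. Specializing equation \eqref{Zpk} at $k=0$ gives $\Zpo = p_1 + p_1 \cdot \comm \circ (\Zpao/\Zpo)$. Combined with the cycle-index form $\Zpo = p_1(\comm+1)$ (the lift of $\Spo = xe^x$ coming from the species isomorphism $\mathcal{H}^p_0 \cong X\cdot \mathcal{E}$), this forces the identity $\comm \circ (\Zpao/\Zpo) = \comm$. Since $\comm$ is plethystically invertible, we must have $\Zpao/\Zpo = p_1$, and hence $\Zpao = p_1 \cdot \Zpo$.

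Second, I would apply equation \eqref{Zpak} at $k=0$ to obtain $\Zpmu \circ \Zpo = \Zpao + \Zpo = (p_1+1)\Zpo$. The crucial input is that $\spl$ is the plethystic inverse of $\Zpo$, i.e.\ $\Zpo \circ \spl = p_1$. This is the cycle-index refinement of the compositional identity $\SW \cdot e^\SW = x$ used in the proof of Theorem \ref{thm smu}, promoted from generating series to cycle indices by interpreting $\spl = \Sigma \mathbf{C}_{\operatorname{PreLie}}$ as the suspended cycle index of the PreLie operad of rooted trees. Composing $\Zpmu \circ \Zpo = (p_1+1)\Zpo$ on the right with $\spl$ and using multiplicativity of plethysm in the outer argument produces $\Zpmu = (\spl+1)\,p_1 = p_1(\spl+1)$, yielding the formula for $\Zpmu$.

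Third, for $\Zmu$, I would combine equation \eqref{Zak} at $k=0$ (giving $\Zmu \circ \Zpo = \Zao + \Zpo$) with the dissymmetry principle $\Zo + \Zpao = \Zpo + \Zao$ to eliminate $\Zao$, obtaining $\Zmu \circ \Zpo = \Zo + \Zpao = \comm + p_1\Zpo$ (using $\Zo = \comm$, the cycle index of non-empty sets). Composing on the right with $\spl$ then gives an explicit form for $\Zmu$ as a sum of $\comm \circ \spl$ and a $p_1$-linear contribution coming from $p_1\Zpo \circ \spl = p_1 \spl$, which regroups into the stated expression $\comm \circ \spl + p_1(\spl+1)$. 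The equivalent form $\Zmu = p_1 - \Sigma M$ then follows from identifying $\spl$ with the cycle index of suspended PreLie and invoking Chapoton's anti-cyclic structure on PreLie \cite{ChAOp}, which realises the $\mathfrak{S}_n$-representation on $\operatorname{PreLie}(n-1)$ computed by $\Sigma M$.

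The main obstacle will be the first step: extracting $\Zpao = p_1\Zpo$ at the cycle-index level. The generating series $\Spao = x^2 e^x$ from Proposition \ref{SaoSpao} is far from sufficient to recover the full $\mathfrak{S}_n$-equivariant structure, so one must route the argument through equation \eqref{Zpk} together with the plethystic invertibility of $\comm$. Once this identification is secured, the remaining steps are routine plethystic bookkeeping, and the appearance of the PreLie operad emerges naturally from the plethystic inverse $\spl$ of $\Zpo$.
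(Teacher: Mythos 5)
Your route is essentially the paper's: you compute $\Zo=\comm$ and $\Zpo=\perm$, feed them into \eqref{Zpk} at $k=0$, use plethystic invertibility of $\comm$ (i.e.\ composition with $\Slie$) together with $\perm\circ\spl=\spl\circ\perm=p_1$ to solve for $\Zpmu$, then eliminate $\Zao$ via the dissymmetry relation and compose on the right with $\spl$ to get $\Zmu$; the detour through $\Zpao=p_1\,\Zpo$ is only a cosmetic repackaging of the paper's direct manipulation of $(\Zpmu\circ\perm-\perm)/\perm$, and those steps are correct.

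Two points need attention. First, your final ``regrouping'' is not an identity: your computation correctly yields $\Zmu=\comm\circ\spl+p_1\spl$, which differs by $p_1$ from the displayed $\comm\circ\spl+p_1(\spl+1)$. The paper's own proof in fact lands on $\comm\circ\spl+p_1(\spl+1)-p_1$, and only the form $\comm\circ\spl+p_1\spl$ is compatible with $\Zmu=p_1-\Sigma M$ (using $\comm\circ\spl=(p_1-\spl)/\spl$ and $\Sigma M-1=-p_1(-1+\spl+\tfrac{1}{\spl})$ from \cite{ChAOp}) and with the series $\Smu$ of theorem \ref{thm smu}; the second equality as printed in the statement is a misprint, so keep your derived expression rather than forcing it into the printed one. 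Second, the key input $\perm\circ\spl=p_1$ cannot be ``promoted'' from the generating-series identity $\SW e^{\SW}=x$: an equality of exponential generating series does not imply the corresponding equality of cycle indices. The identity is true, but it is the Koszul-duality statement for the $\operatorname{Perm}$ and $\operatorname{PreLie}$ operads that the paper quotes from \cite{ChHyp}, and it should be invoked as such; similarly, your closing appeal to the anti-cyclic structure needs the explicit formula (equation 50 of \cite{ChAOp}) for $M$, not just the existence of the $\mathfrak{S}_{n+1}$-action.
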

		
		\begin{proof}		
		
		We first compute $\Zo$ and $\Zpo$. There is only one $0$-chain: the empty chain. It is fixed by every permutation. A quick computation gives: 
		
		\begin{equation*}
		\Zo = \comm.
		\end{equation*}
		
		We derive from equation \eqref{der}: 
		\begin{equation}\label{Zpo}
		\Zpo=p_1 \frac{\partial \comm }{\partial p_1} = \perm = p_1 (1+\comm).
		\end{equation}

		The equation \eqref{Zpk} gives: 
		\begin{equation*}
		\Zpo = p_1 + p_1 \times \comm \circ \left( \frac{\Zpmu \circ \Zpo - \Zpo}{\Zpo} \right),
		\end{equation*}
		
		so
		\begin{equation*}
		p_1+p_1 \times \comm=p_1+ p_1 \times \comm \circ \left( \frac{\Zpmu \circ \perm - \perm}{\perm} \right).
		\end{equation*}
		
		Recall that $\spl \circ \perm = \perm \circ \spl = p_1$, according to \cite{ChHyp} \footnote{This is a consequence of Koszul duality for operads}.
		
		 We obtain: 
		 \begin{equation*}\spl= \frac{\Zpmu  - p_1}{p_1}, 
         \end{equation*}		 
		 hence the result: 
		
		\begin{equation} \label{Zpmu}
		\Zpmu=p_1 \left( \spl + 1 \right).
		\end{equation}
		
		The dissymmetry equation \eqref{rel diss Z}, combined with relations \eqref{Zak} and \eqref{Zpak} in $k=0$ gives: 
		\begin{equation*}\comm+ \Zpmu \circ \perm - \perm= \perm + \Zmu \circ \perm - \perm.
		\end{equation*}
		
		Composing by $\spl$ and replacing $\Zpmu$ by its expression in equation \eqref{Zpmu}, we obtain: 
		
		\begin{equation*} \Zmu = \comm \circ \spl+ p_1 \left( \spl + 1 \right)  - p_1.
		\end{equation*}
		
		As $(p_1(\comm+1)) \circ \spl = p_1$, 
		
		we thus obtain
\begin{equation*}		\comm \circ \spl= \frac{p_1-\spl}{\spl}.
\end{equation*}
		
		Therefore \begin{equation*} \Zmu = -1+\frac{p_1}{\spl} + p_1 \times \spl.
		\end{equation*}	
		
		According to \cite[equation 50]{ChAOp}, composing by the suspension, we get: 
		\begin{equation*} \Sigma M-1 = -p_1(-1 + \spl +\frac{1}{\spl}),
		\end{equation*}
		
		The result is obtained by using the following equality: 
		\begin{equation*}
		(p_1-\Zmu)-1=p_1-\frac{p_1}{\spl} - p_1 \times \spl.
		\end{equation*}
	
		\end{proof} 
		
		\subsubsection{Back to $\Zao$ and $\Zpao$}
		
		In this part, we refine the results obtained at proposition \ref{SaoSpao}.
		
	\begin{thm}
	Cycle indices associated with species of large $0$-chains, whose minimum is an edge-pointed hypertree and species of large $0$-chains, whose minimum is an edge-pointed rooted hypertree, satisfy: 
	
	\begin{equation}
	\Zao=\comm+(p_1-1) \times \perm,
	\end{equation}
	and
	\begin{equation}
	\Zpao=p_1 \perm.
	\end{equation}
	
	For a cycle index $\mathbf{C}$, we write $(\mathbf{C})_n$ for the part of $\mathbf{C}$ corresponding to a representation of the symmetric group $\mathfrak{S}_n$.
		
	Therefore, for all $n \geq 2$, writing $S^{(n-1,1)}$ for the irreducible representation of the symmetric group $\mathfrak{S}_n$ associated with the partition $(n-1,1)$ of $n$, we obtain: 
	
\begin{enumerate}	
\item	$(\Zao)_n$  is the character of the representation $S^{(n-1,1)} \otimes S^{(n-1,1)}$;
\item $(\Zpao)_n$ is the character of the representation $S^{(n-1,1)} \otimes S^{(n-1,1)} \oplus S^{(n-1,1)}$ .
\end{enumerate}
	\end{thm}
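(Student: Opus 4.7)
The plan is to derive the two cycle index formulas by specializing the functional equations already established, and then to identify the corresponding representations through a direct character computation.

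\textbf{Step 1: Specialize the functional equations.} I would begin by applying the relations \eqref{Zak} and \eqref{Zpak} at $k=0$. Combined with $\Zpo = \perm$ from \eqref{Zpo}, these yield immediately
\[
\Zao = \Zmu \circ \perm - \perm, \qquad \Zpao = \Zpmu \circ \perm - \perm.
\]
The problem thereby reduces to evaluating the plethysms $\Zmu \circ \perm$ and $\Zpmu \circ \perm$ using the formulas for $\Zmu$ and $\Zpmu$ from Theorem~\ref{Zmu}.

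\textbf{Step 2: Evaluate the plethysms.} The key algebraic input is the Koszul duality identity $\spl \circ \perm = p_1$ (already invoked in the proof of Theorem~\ref{Zmu}), together with the fact that $p_1$ is the plethystic identity and that plethysm distributes over sums and products. Substituting $\Zpmu = p_1(\spl+1)$ gives $\Zpmu \circ \perm = \perm \cdot (p_1+1)$, and subtracting $\perm$ yields $\Zpao = p_1 \perm$. A parallel computation starting from the expression of $\Zmu$ established in the proof of Theorem~\ref{Zmu} (namely $\Zmu = \comm \circ \spl + p_1 \spl$, equivalent to $\Zmu = p_1 - \Sigma M$) gives $\Zmu \circ \perm = \comm + p_1 \perm$, and hence $\Zao = \comm + (p_1-1)\perm$.

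\textbf{Step 3: Identify the representations via characters.} For a permutation $\sigma \in \mathfrak{S}_n$ with $m = \mathrm{fix}(\sigma)$ fixed points, the standard identifications give $\chi_{\comm_n}(\sigma) = 1$ and $\chi_{\perm_n}(\sigma) = m$ (permutation representation). Multiplication by $p_1$ at the Frobenius level corresponds to induction from the stabilizer $\mathfrak{S}_{n-1}$ of a point, so the classical formula for induced characters in this transitive case yields
\[
\chi_{p_1 \perm}(\sigma) = \sum_{i\,:\,\sigma(i)=i} \chi_{\perm_{n-1}}\!\bigl(\sigma|_{[n]\setminus\{i\}}\bigr) = m(m-1).
\]
Combining these, $\chi_{\Zao}(\sigma) = 1 + m(m-1) - m = (m-1)^2$ and $\chi_{\Zpao}(\sigma) = m(m-1) = (m-1)^2 + (m-1)$. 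Since $\chi_{S^{(n-1,1)}}(\sigma) = m - 1$, these coincide with the characters of $S^{(n-1,1)} \otimes S^{(n-1,1)}$ and of $S^{(n-1,1)} \otimes S^{(n-1,1)} \oplus S^{(n-1,1)}$ respectively, and characters determine representations.

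The main technical obstacle is the correct bookkeeping of the plethystic substitutions, in particular tracking the constant and linear terms of $\Zmu$ and $\Zpmu$ through composition with $\perm$; once this is carried out with the aid of $\spl \circ \perm = p_1$ and the distributivity of plethysm, the character identification in Step~3 is elementary.
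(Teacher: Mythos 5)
Your proposal is correct and follows essentially the same route as the paper: specialize relations \eqref{Zak} and \eqref{Zpak} at $k=0$ with $\Zpo=\perm$, substitute the expressions of $\Zmu$ and $\Zpmu$ using $\spl \circ \perm = p_1$ to get $\Zao=\comm+(p_1-1)\perm$ and $\Zpao=p_1\perm$, then identify the characters through fixed-point counts and the fact that $\chi_{S^{(n-1,1)}}(\sigma)=\mathrm{fix}(\sigma)-1$. The only cosmetic difference is that the paper extracts the coefficients of $p_\lambda/z_\lambda$ directly from the power-sum expansion, whereas you obtain the same values $(m-1)^2$ and $m(m-1)$ by an induced-character (species product) argument.
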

	
	\begin{proof}

The equalities come from relations \eqref{Zak} and \eqref{Zpak}, replacing $\Zpo$ by its expression in equation \eqref{Zpo}, $\Zpmu$ by its expression in equation \eqref{Zpmu} and $\Zmu$ by its expression in theorem \ref{Zmu}. We obtain: 

	\begin{equation}
	(\Zao)_n= \sum_{\lambda \vdash n} \frac{p_\lambda}{z_\lambda}+ \sum_{\lambda \vdash n-2} p_1^2\frac{p_\lambda}{z_\lambda}- \sum_{\lambda \vdash n-1} p_1\frac{p_\lambda}{z_\lambda}
	\end{equation}

Denote now  by $f_\lambda$ the number of fixed points in a permutation of type $\lambda$.

The coefficient in front of $\frac{p_\lambda}{z_\lambda}$ in $(\Zao)_n$ is: 
\begin{equation*}1-f_\lambda +f_\lambda (f_\lambda -1) = (f_\lambda -1)^2.
\end{equation*}
	
	In the same way, we obtain:
	 
	\begin{equation}
	(\Zpao)_n= \sum_{\lambda \vdash n-2} p_1^2\frac{p_\lambda}{z_\lambda}.
	\end{equation}

The coefficient in front of $\frac{p_\lambda}{z_\lambda}$ in $(\Zpao)_n$ is 
\begin{equation*}(f_\lambda -1)^2+f_\lambda -1 = f_\lambda (f_\lambda -1).
\end{equation*}

We conclude thanks to the following lemma:
\begin{lem}
The character of the irreducible representation $S^{(n-1,1)}$ on the conjugacy class $C_\sigma$ is equal to $p-1$, where $p$ is the number of fixed points of every element in $C_\sigma$.
\end{lem}	

Indeed, according to the previous lemma the character of the representation $S^{(n-1,1)} \otimes S^{(n-1,1)}$ on the conjugacy class $C_\sigma$ is equal to $(p-1)^2$, where $p$ is the number of fixed points of every element in $C_\sigma$. This gives the first relation.

The second one is obtained by computing the character of the representation $S^{(n-1,1)} \otimes S^{(n-1,1)} \oplus S^{(n-1,1)}$, equal to $f_\sigma(f_\sigma-1)$ on a conjugacy class whose elements have $f_\sigma$ fixed points.
	
	\end{proof}
	
\begin{proof}[of the lemma]
	The natural representation of $\mathfrak{S}_n$ on $\mathbb{C}^n$ is the direct sum of the trivial representation and the representation $S^{(n-1,1)}$.
	
	The character of this representation on a conjugacy class $C_\sigma$ is equal to the number of fixed points of every element of $C_\sigma$. 
	 
	 The character of the trivial representation is equal to $1$. The result is obtained by difference.
	
	\end{proof}

\section{Action of symmetric group on Whitney homology}

	\subsection{Definition and properties of Whitney homology}
	
The reader may consult the article \cite{Wachs} for definitions and properties of Whitney homology.

\begin{defi}
\emph{Whitney homology} of a poset $\mathcal{P}$ with minimum $\hat{0}$ is the collection of spaces: 
\begin{equation}
\operatorname{WH_i}(P)=\oplus_{x \in P} \tilde{H}_{i-2}([\hat{0},x]), i\geq 2.
\end{equation}
\end{defi}

\begin{thm}\cite{Wachs}
If a poset $P$ is Cohen-Macaulay, its Whitney homology satisfies: 
\begin{equation}
\operatorname{WH_i}(P)=\oplus_{x \in P_{i-1}} \tilde{H}_{i-2}([m,x])
\end{equation}
where $P_{i-1}=\{x \in P|r(x)=i-1\}$ and $r(x)$ is the \emph{rank} of $x$.
\end{thm}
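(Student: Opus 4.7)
The plan is to apply the defining formula
$$\operatorname{WH}_i(P)=\bigoplus_{x\in P}\tilde H_{i-2}([m,x])$$
termwise and use the Cohen-Macaulay hypothesis to kill all contributions except those coming from elements of the correct rank. In other words, for each $x \in P$ I will identify the unique degree in which $\tilde H_*((m,x))$ can be non-zero and then read off which $x$ contribute to $\operatorname{WH}_i(P)$.

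The first step is to check that the Cohen-Macaulay property descends to each lower interval $[m,x]$. The natural route is to use the link-based definition recalled above: the order complex of $(m,x)$ can be realized as the subcomplex of $|P|$ consisting of chains whose top element is strictly below $x$, and the CM condition on $|P|$, restricted to simplices lying in this subcomplex, yields the CM condition on $|(m,x)|$. Alternatively one invokes the standard fact that an order ideal of a Cohen-Macaulay poset is again Cohen-Macaulay, which is a direct consequence of the definition and needs no further argument in this paper.

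The second step is a pure dimension count. For $x$ of rank $r(x)$, a maximal strict chain in the open interval $(m,x)$ consists of the intermediate elements of ranks $1,\dots,r(x)-1$, giving a simplex of top dimension $r(x)-2$ in $|(m,x)|$. Combined with the conclusion of step one, this forces
$$\tilde H_j((m,x))=0\quad\text{for every }j\neq r(x)-2,$$
so $\tilde H_{i-2}([m,x])$ vanishes unless $r(x)-2=i-2$. The surviving terms are exactly indexed by the stated set of elements of a fixed rank, and the direct sum reduces to the claimed one.

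I expect the only genuine obstacle to be the first step, i.e.\ the inheritance of Cohen-Macaulayness by the lower intervals; the rest is bookkeeping with ranks and simplex dimensions. One mild care is needed with the convention that the empty simplex has dimension $-1$ (as recalled in the definition of CM above), so that the equality $\tilde H_{-1}(\emptyset)=0$ handles the degenerate $i=1$ case, and with the shift between ``rank of $x$'' and ``dimension of the order complex of $(m,x)$'' that governs the indexing.
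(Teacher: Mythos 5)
The paper does not prove this theorem (it is quoted from Wachs), so your argument has to stand on its own against the standard proof, and as written it has two genuine gaps. The first is your step one. Cohen--Macaulayness is a condition on links, and it does not pass to subcomplexes by ``restricting the CM condition'': the links of a face computed inside the subcomplex of chains lying below $x$ are not the links computed in $|\mathcal{P}|$, so nothing follows directly from that realization. The fallback you offer is worse: an arbitrary order ideal of a Cohen--Macaulay poset need \emph{not} be Cohen--Macaulay (in the proper part of the Boolean lattice $B_3$, the ideal generated by $\{1,2\}$ and $\{3\}$ has a one-dimensional but disconnected order complex), so this is certainly not ``a direct consequence of the definition''. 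What you actually need is the statement for principal ideals, and the clean link argument is available: since a CM complex is pure, $\widehat{P}$ is graded, and if $\sigma$ is the simplex consisting of $x$ together with a saturated chain from $x$ up to a maximal element of the proper part, then $Lk(\sigma)$ is exactly the order complex of the open interval $(m,x)$ (all intermediate open intervals along the saturated chain are empty and are units for the join). Applying the CM condition of $|\mathcal{P}|$ to this particular $\sigma$ gives $\tilde{H}_j((m,x))=0$ for $j\neq r(x)-2$. Alternatively, cite the standard theorem (Baclawski, Bj\"orner--Wachs) that open intervals of a CM poset are CM; either way a real argument or citation is required, and purity/gradedness should be invoked for your claim that maximal chains of $(m,x)$ run through ranks $1,\dots,r(x)-1$.

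The second gap is the indexing, which you wave through. Your (correct) dimension count concentrates $\tilde{H}_*((m,x))$ in degree $r(x)-2$, so the surviving summands of $\operatorname{WH}_i(P)$ are those with $r(x)-2=i-2$, i.e. $r(x)=i$. With the rank normalization used in this paper ($r(m)=0$, $r(h)=A-1$ for $A$ edges), that is the set $P_i$, which is the standard formulation of Wachs and Sundaram, and it is \emph{not} literally the set $P_{i-1}=\{x\in P\,|\,r(x)=i-1\}$ appearing in the statement. You cannot simply assert that ``the surviving terms are exactly indexed by the stated set'': you must either exhibit the shifted rank convention under which the two agree or note the off-by-one discrepancy explicitly; as written, your argument proves $\operatorname{WH}_i(P)=\oplus_{r(x)=i}\tilde{H}_{i-2}([m,x])$ and no more. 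A last small point: with the conventions of this paper ($C_{-1}=\mathbb{C}\cdot e$) one has $\tilde{H}_{-1}(\emptyset)=\mathbb{C}$, not $0$, for atoms $x$; this causes no harm only because $\operatorname{WH}_i$ is defined for $i\geq 2$, so your remark about the $i=1$ case should be dropped or corrected.
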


As $\widehat{\operatorname{HT_n}}$ is Cohen-Macaulay, according to theorem \ref{Cohen-Macaulay}, it satisfies the previous theorem.
	
	To compute the Whitney homology of $\widehat{\operatorname{HT_n}}$, we define a weight on large $k$-chains:
	
	\begin{defi}
	The \emph{weight} of a hypertrees chain $S$, denoted by $w(S)$, is: 
	\begin{equation*}
	w(S)=\#\text{edge}(\text{max}(S))-1	\end{equation*}	
	where $\#\text{edge}(\text{max}(S))$ is the number of edges of the maximum in $S$.
	
	Note that in $\widehat{\operatorname{HT_n}}$, the weight of a chain is equal to the rank of its maximum.
	\end{defi}
	
	For $\mathcal{E}$ a species with cycle index $\mathbf{C}$, we will denote by $\mathcal{E}_t$ the associated weighted species with cycle index $\mathbf{C}_t$.
	
	Thus, the species  $\mathcal{H}_{{k,t}}$ is the species which associates to a set $A$ the set of all pairs of large hypertrees $k$-chain with the weight of its maximum. Therefore, we have: 
	\begin{equation*}
	\Zkt = \sum_{n \geq 1} \sum_ {i \geq 0} \chi(\operatorname{HL^n_{k,i}}) t^i \frac{x^n}{n!},
	\end{equation*}
	where $\chi(\operatorname{HL^n_{k,i}})$ is the character given by the action of symmetric group $\mathfrak{S}_n$ on the space of large $k$-chains whose maximum have rank $i$.
	
	The reasoning of part \ref{chLarg} is the same with the weight: our aim is to find polynomial relations in $k$ between large $k$-chains, and then evaluate them at $k=-1$. Therefore, we will obtain: 
	
	\begin{equation}
	\Zmut = \sum_{n \geq 1} \sum_ {i \geq 0} \operatorname{WH_i}(\widehat{\operatorname{HT_n}}) t^i \frac{x^n}{n!}.
	\end{equation}
	
	\subsection{Connections between cycle indices}
	
	Relations between species of part \ref{rel esp} give the following relations when we take the weight into account: 
	
	\begin{prop}Series $\Zkt$, $\Zpkt$, $\Zakt$ and $\Zpakt$ satisfy the following relations:
	
	\begin{equation} \label{rel diss Zt}
	\Zkt+\Zpakt=\Zpkt+\Zakt,
	\end{equation}
	
	\begin{equation} \label{Zpkt}
	\Zpk = \frac{p_1}{t} \times (1 + \comm \circ \left( \frac{t\Zpkmut - p_1}{p_1}  \circ t \Zpkt \right),
	\end{equation}
	
	\begin{equation} \label{Zakt}
	\Zakt = (\Zkmut - \frac{p_1}{t} )\circ (t \Zpkt),
	\end{equation}
	
	\begin{equation} \label{Zpakt}
	\Zpakt = (\Zpkmut - \frac{p_1}{t}  )\circ (t \Zpkt),
	\end{equation}
	
	\begin{equation} \label{dert}
	p_1 \frac{\partial \Zkt }{\partial p_1} = \Zpkt.
	\end{equation}
	
	\end{prop}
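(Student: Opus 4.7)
The plan is to revisit each of the species identities proved in Section \ref{rel esp} and verify that the weight $t^{\#\text{edges}(\max)-1}$ is correctly propagated under the corresponding operations on cycle indices. The main observation is that the maximum of a chain is preserved by the bijections of the dissymmetry principle and by pointing a vertex of the minimum, but combines additively under the grafting decompositions that produce species compositions.

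The dissymmetry relation \eqref{rel diss Zt} and the derivative relation \eqref{dert} are immediate. The dissymmetry bijection on chains acts only on the minimum of the chain, so it preserves the number of edges in the maximum and thus the weight; this lifts the unweighted identity $\mathcal{H}_k+\mathcal{H}^{pa}_k=\mathcal{H}^p_k+\mathcal{H}^a_k$ verbatim to weighted cycle indices. Similarly, the species differential identity $X \times \mathcal{H}_k' = \mathcal{H}^p_k$ becomes \eqref{dert} because distinguishing a vertex in the minimum does not affect its maximum.

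For the composition relations \eqref{Zakt}, \eqref{Zpakt} and \eqref{Zpkt}, the key combinatorial input is that in the grafting decompositions used for $\mathcal{H}^a_k$, $\mathcal{H}^{pa}_k$ and $\mathcal{H}^p_k$, if the outer chain's maximum has $E$ edges and each inner rooted chain's maximum has $e_i$ edges, then the maximum of the composite chain has exactly $E + \sum_i e_i$ edges. Writing
\begin{equation*}
t^{E+\sum_i e_i - 1} = t^{E-1} \cdot \prod_i t^{e_i},
\end{equation*}
we see that the natural weight $t^{E-1}$ of the outer chain combines correctly with the weight $t^{e_i-1}$ of each inner piece provided we multiply each inner substitution by an extra factor of $t$; this explains the occurrence of $t\Zpkt$ in all three equations. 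The term $\frac{p_1}{t}$ accounts for the singleton chain: its ``maximum'' has zero edges and thus carries weight $t^{-1}$, so subtracting $\frac{p_1}{t}$ removes the trivial case exactly as $X$ is removed in the corresponding species identities.

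With these conventions, \eqref{Zakt} and \eqref{Zpakt} follow directly from the species identities for $\mathcal{H}^a_k$ and $\mathcal{H}^{pa}_k$, while \eqref{Zpkt} follows from combining \eqref{epc}, \eqref{ecct}, \eqref{ectc} in the same way that led to \eqref{eHpk}, tracking weights at each step. The main obstacle is purely bookkeeping: one must verify that the $-1$ exponent shifts combine consistently under iterated composition and that the $X$-subtractions correctly translate into $\frac{p_1}{t}$-subtractions, but no new combinatorial idea is required beyond those already used in Section \ref{rel esp}.
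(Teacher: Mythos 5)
Your proof is correct and takes essentially the same route the paper intends: the paper asserts this proposition as the weighted form of the species relations of Section \ref{rel esp}, and your bookkeeping — the additivity of the edge count of the maxima under the grafting decompositions (hence the substitutions by $t\Zpkt$), the invariance of the maximum under the dissymmetry bijection and under rooting, and the weight $t^{-1}$ of one-vertex chains explaining the $\frac{p_1}{t}$ terms — is exactly the verification left implicit there.
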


	\subsection{New pointed chains}

We need two new kinds of pointed chains. Therefore, we will denote: 

\begin{itemize}
\item by $\mathcal{H}^A_{{k,t}}$, the species associated with large weighted hypertrees $k$-chains, whose maximum is an edge-pointed hypertree, and by $\ZAkt$  the associated cycle index.
\item by $\mathcal{H}^{pA}_{{k,t}}$, the species associated with large weighted hypertrees $k$-chains whose maximum is an edge-pointed rooted hypertree and $\ZPAkt$  the associated cycle index.
\end{itemize}

 Note that, by definition, the species $\mathcal{H}^a_{{1,t}}$ coincides with the species $\mathcal{H}^A_{{1,t}}$ and that the species $\mathcal{H}^{pa}_{{1,t}}$ coincides with species $\mathcal{H}^{pA}_{{1,t}}$ .
 
 The previous species are related with the other pointed hypertrees species by the following theorem: 
 
 \begin{thm} The species $\mathcal{H}^{A}_{{k,t}}$ and $\mathcal{H}^{pA}_{{k,t}}$ satisfy: 
  \begin{equation}
 \mathcal{H}^{A}_{{k,t}}=\mathcal{H}^A_{ { k-1,t } } \circ (t \mathcal{H}^p_{ { k,t }}),
 \end{equation}
  \begin{equation}
 \mathcal{H}^{pA}_{{k,t}}=\mathcal{H}^{pA}_{{k-1,t}} \circ (t \mathcal{H}^p_{{k,t}}),
 \end{equation}
 \begin{equation}
 \mathcal{H}_{{k,t}}+\mathcal{H}^{pA}_{{k,t}}=\mathcal{H}^{p}_{{k,t}}+\mathcal{H}^{A}_{{k,t}}.
 \end{equation}
 \end{thm}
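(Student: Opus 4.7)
The plan is to prove each equation by a combinatorial decomposition in the spirit of Propositions \ref{d�comp} and \ref{eHak}, transferring the pointed/rooted structure from the minimum of the chain to the maximum while tracking the weight $t$.

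For the first equation, let $(T_1 \preceq \cdots \preceq T_k, e) \in \mathcal{H}^A_{k,t}$. Since $T_1 \preceq T_k$ forces every edge of $T_1$ to be a union of edges of $T_k$, the vertex set of the pointed edge $e$ is contained in a unique edge $E_1$ of $T_1$. Restrict the chain to $E_1$: the sub-hypertrees $T_l|_{E_1}$ (obtained by keeping only edges of $T_l$ whose vertex set lies in $E_1$) form a chain on $E_1$ starting with the one-edge hypertree $\{E_1\}$, with $e$ pointed in the max $T_k|_{E_1}$. Dropping the trivial first level yields a $(k-1)$-chain on $E_1$ with $e$ pointed at the max, i.e.\ an element of $\mathcal{H}^A_{k-1,t}(E_1)$. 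For each vertex $v \in E_1$, the edges of $T_l$ not lying inside $E_1$ and belonging to the connected component of $v$ form a rooted sub-hypertree at $v$; across the $k$ levels these assemble into a rooted sub-chain in $\mathcal{H}^p_{k,t}$. Grafting these sub-chains onto the vertices of the outer chain reconstructs $(T_1 \preceq \cdots \preceq T_k, e)$, establishing a species isomorphism. The weights match thanks to
\begin{equation*}
\#E(T_k) - 1 = \bigl(\#E(T_k|_{E_1}) - 1\bigr) + \sum_{v \in E_1} \bigl(\#E(R_v) - 1\bigr) + |E_1|,
\end{equation*}
where $R_v$ is the max of the rooted sub-chain at $v$, and the extra contribution $t^{|E_1|}$ is absorbed by multiplying $\mathcal{H}^p_{k,t}$ by $t$, giving one factor of $t$ per vertex of $E_1$.

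The second equation follows from the same decomposition with the additional distinguished root $v_0 \in e \subseteq E_1$: since $v_0$ is a vertex of $E_1$, the outer $(k-1)$-chain on $E_1$ inherits this distinguished vertex and becomes an element of $\mathcal{H}^{pA}_{k-1,t}(E_1)$, while the rooted sub-chains remain in $\mathcal{H}^p_{k,t}$. The third equation is the dissymmetry principle of Proposition \ref{principe de dissym�trie} applied to the maximum $T_k$ of the chain rather than to a single hypertree: the center of $T_k$ provides the bijection interchanging edge-pointed and rooted structures at the max. Crucially, rooting the maximum of a chain is the same as rooting any hypertree in the chain, since the distinguished vertex is shared across all levels, which explains why $\mathcal{H}^p_{k,t}$ (and not a separate "rooted-at-max" species) appears on the right-hand side.

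The main obstacle will be confirming that the decomposition in the first equation is compatible with the $\mathfrak{S}_n$-action and that the weight bookkeeping is consistent across the species composition; both points follow from the canonical definition of $E_1$ as the unique edge of $T_1$ containing the vertex set of $e$. Once this is checked, the remaining combinatorial content is a direct transposition of the arguments in Propositions \ref{d�comp} and \ref{eHak} from minimum to maximum, together with the bookkeeping of the weight $t$ counting edges in the maximum.
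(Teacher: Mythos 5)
Your proposal is correct and follows essentially the same route as the paper: you transfer the pointed edge of the maximum to the unique edge of the minimum containing it, reuse the decomposition of an edge-pointed chain into an outer $(k-1)$-chain composed with rooted sub-chains (now keeping the pointed edge, and the root when present, at the maximum of the outer chain), and obtain the third identity from the dissymmetry principle applied to the maximum, using that rooting a chain is the same as rooting its maximum. Your explicit weight bookkeeping with the factor $t$ per block matches what the paper leaves implicit, so no further changes are needed.
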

 
 \begin{proof}
Pointing an edge in the maximum is the same as pointing an edge in the minimum
and pointing an edge in the set of distinguished edges thus obtained in the maximum of the chain. Using the proof of proposition \ref{eHak} and the previous statement give the first relation.
 
If we distinguish a vertex (root) in the chain, we obtain the second relation.

The third relation is obtained by the same reasoning as in paragraph \ref{paragraphe princ de diss} on the dissymmetry principle.
 
 \end{proof}
 
 	This implies the following relations: 
	\begin{cor} Series $\ZAkt$ and $\ZPAkt$ satisfy: 
	\begin{equation} \label{ZAkt}
 \ZAkt = \ZAkmut \circ (t \Zpkt),
 \end{equation}
  \begin{equation}\label{ZPAkt}
 \ZPAkt =\ZPAkmut \circ (t \Zpkt).
 \end{equation}
	\end{cor}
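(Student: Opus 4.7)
The three identities are the max-pointed analogue of Proposition \ref{eHak} together with a dissymmetry principle applied at the maximum rather than at the minimum. My plan is to adapt the bijective construction of Proposition \ref{eHak} using the unique edge of the minimum that contains the distinguished edge of the maximum; the weight factor $t$ will be the only essentially new ingredient.

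For the first identity, let $S = (h_0, \ldots, h_k)$ be a weighted large $k$-chain with $h_k$ edge-pointed at $e$. The poset structure gives a unique edge $e^{(0)}$ of $h_0$ containing $e$; more generally, the edges of each $h_\ell$ are partitioned by which edge of $h_0$ they refine. Restricting the chain to the vertex set $V(e^{(0)})$ yields a $k$-chain there whose minimum has the single edge $e^{(0)}$, which is equivalent to a $(k-1)$-chain with a natural edge-pointing at $e$ at its maximum --- an $\mathcal{H}^A_{k-1,t}$-structure. Deleting $e^{(0)}$ from $h_0$ produces a hypertree forest whose components $V_v$ are indexed by the vertices $v \in V(e^{(0)})$; as in the proof of Proposition \ref{eHak}, following each component through the chain (rooted at $v$) yields a rooted $k$-chain, i.e.\ an $\mathcal{H}^p_{k,t}$-structure on each block. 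The reverse direction is the evident gluing of the base chain on $V(e^{(0)})$ with the rooted chains on the blocks.

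For the weights, the edges of $h_k$ split as $\#\mathrm{edge}(h_k) = \#\mathrm{edge}(h_k|_{V(e^{(0)})}) + \sum_v \#\mathrm{edge}(h_k|_{V_v})$. The base contributes $t^{\#\mathrm{edge}(h_k|_{V(e^{(0)})}) - 1}$ and each block $V_v$ contributes $t \cdot t^{\#\mathrm{edge}(h_k|_{V_v}) - 1}$, and the product telescopes to $t^{\#\mathrm{edge}(h_k) - 1}$; this is exactly why the extra factor $t$ must be attached to $\mathcal{H}^p_{k,t}$. The second identity follows from the same bijection with a distinguished root $r \in V(e)$: since $V(e) \subseteq V(e^{(0)})$, the root lives in the base chain and upgrades the $\mathcal{H}^A_{k-1,t}$-structure to an $\mathcal{H}^{pA}_{k-1,t}$-structure. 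The third identity is obtained by applying the dissymmetry principle of Section \ref{paragraphe princ de diss} to the maximum of the chain, in complete parallel with Proposition \ref{principe de dissym�trie} where it was applied at the minimum.

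The main obstacle is the weight bookkeeping: making sure that the extra $t$ attached to $\mathcal{H}^p_{k,t}$ exactly absorbs the per-block ``$-1$'' contributions so that a single ``$-1$'' remains at the top. The geometric bijection itself is a direct transposition of the argument for Proposition \ref{eHak}, and the dissymmetry step is essentially unchanged.
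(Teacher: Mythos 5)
Your proposal is correct and follows essentially the same route as the paper: the paper obtains these cycle-index identities from the species-level theorem just above, proved exactly by your decomposition --- pass to the unique edge $e^{(0)}$ of the minimum containing the pointed edge of the maximum, reuse the construction of proposition \ref{eHak} to split off rooted $k$-chains at the vertices of $e^{(0)}$, and add the root inside the pointed edge for the $pA$ case --- after which the corollary is just the transfer to cycle indices, with your weight bookkeeping accounting for the factor $t$ in $t\Zpkt$. (The ``third identity'' you mention is the dissymmetry relation of the theorem, not part of this corollary, and your indexing of a $k$-chain as $(h_0,\ldots,h_k)$ is only a harmless shift of convention.)
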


	\subsection{The $\HAL$ series} 	
	
	We recall here the definitions of $\HAL$ series introduced in \cite{ChHyp}.
 	\begin{defi} Series $\HAL$, $\HALp$, $\HALpa$ and $\HALa$ are the series defined by the following functional equations: 
 	\begin{equation}
 	\HALpa=p_1 \left(\frac{p_1}{1+tp_1} \circ \comm \circ (p_1 + (-t) \HALpa) \right),
 	\end{equation}
	\begin{equation}
	\HALp=p_1(\Stlie \circ \comm \circ (p_1 + (-t) \HALpa)),
	\end{equation} 	
 	\begin{equation} \label{5.16}
	\HALa=(\comm - p_1) \circ (p_1 + (-t) \HALpa),
	\end{equation}
	\begin{equation}
	\HAL=\HALp + \HALa - \HALpa.
	\end{equation} 	
 	\end{defi}	
 	
 	We introduce the series $\SWt$, defined by: 
		\begin{equation*}  
		(t \perm -t p_1 + p_1) \circ \SWt = \SWt \circ (t \perm -t p_1 + p_1)= p_1.
		\end{equation*}

	\begin{prop} Series $\SWt$ satisfies:
	\begin{equation} \label{CommSWt}
	\comm \circ \SWt = \frac{p_1 - \SWt}{t \SWt}.
	\end{equation}
	\end{prop}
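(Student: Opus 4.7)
The plan is to unfold the defining functional equation of $\SWt$ and rewrite $t\perm - tp_1 + p_1$ in a form where the composition $\comm \circ \SWt$ appears naturally, then solve algebraically.

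First I would simplify the operator $t\perm - tp_1 + p_1$. Using equation \eqref{Zpo}, namely $\perm = p_1(1 + \comm)$, I compute
\begin{equation*}
t\perm - tp_1 + p_1 = tp_1(1+\comm) - tp_1 + p_1 = p_1(1 + t\comm).
\end{equation*}
So the defining relation $(t\perm - tp_1 + p_1) \circ \SWt = p_1$ becomes
\begin{equation*}
\bigl(p_1(1 + t\comm)\bigr) \circ \SWt = p_1.
\end{equation*}

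Next I would use the fact that plethystic substitution of $p_1 \cdot F(p_1,p_2,\ldots)$ along $\SWt$ acts by multiplication by $\SWt$ on $F \circ \SWt$, since $p_1 \circ \SWt = \SWt$. Thus
\begin{equation*}
\SWt \cdot \bigl(1 + t\, \comm \circ \SWt\bigr) = p_1,
\end{equation*}
which rearranges to
\begin{equation*}
t\, \SWt \cdot (\comm \circ \SWt) = p_1 - \SWt,
\end{equation*}
and dividing by $t \SWt$ gives exactly \eqref{CommSWt}. The only subtlety is justifying the multiplicativity step $p_1(1+t\comm) \circ \SWt = \SWt \cdot (1 + t\comm \circ \SWt)$, which follows from the fact that plethystic composition is a ring homomorphism in the outer variable: for any cycle indices $A$ and $B$, $(A \cdot B) \circ \SWt = (A \circ \SWt)(B \circ \SWt)$, together with $p_1 \circ \SWt = \SWt$ and $1 \circ \SWt = 1$.

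There is no real obstacle here; the statement is essentially a direct algebraic manipulation of the defining equation once $\perm$ is expressed in terms of $\comm$. The main thing to be careful about is the direction of composition (the defining equation is stated symmetrically, so either side works), and the fact that $\SWt$ has no constant term, so compositions are well defined as formal power series with polynomial coefficients in $t$.
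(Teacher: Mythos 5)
Your proposal is correct and is essentially the paper's own proof: both start from the defining equation of $\SWt$, use $\perm = p_1(1+\comm)$ so that $\perm - p_1 = p_1\comm$, and then exploit the multiplicativity of plethystic composition in the outer argument together with $p_1 \circ \SWt = \SWt$ to solve for $\comm \circ \SWt$. The only difference is cosmetic (you substitute $\perm$ before rearranging, the paper after), so there is nothing further to add.
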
 
	
	\begin{proof}
	By definition, we have: 
	\begin{equation*}
	(\perm - p_1) \circ \SWt = \frac{p_1 - \SWt}{t}	.
	\end{equation*}
	
	However, $\perm$ satisfies: $\perm = p_1 (1+ \comm)$, 
	
	hence the result.
	\end{proof}	
 	
 	The following theorem gives explicit expressions for $\HAL$ series in terms of $\SWt$. 
 	
 	\begin{thm}\label{expr HAL} The series $\HAL$, $\HALp$, $\HALpa$ and $\HALa$ satisfy: 
 	\begin{equation}
 	\HALpa=\frac{p_1 - \SWt}{t},
 	\end{equation} 	
 	\begin{equation}
 	\HALa= (\comm - p_1) \circ \SWt,
 	\end{equation}
 	\begin{equation}
 	\HALp=\frac{p_1}{t} \left( \Slie \circ \frac{p_1 - \SWt}{\SWt} \right).
 	\end{equation}
 	where $\Slie$ is the series satisfying $\Slie \circ \comm = \comm \circ \Slie = p_1$.
 	\end{thm}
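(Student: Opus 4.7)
The plan is to verify that the three proposed closed-form expressions satisfy the functional equations which define $\HALpa$, $\HALa$, and $\HALp$. Since those equations determine the series uniquely degree by degree in $p_1$, this verification suffices. The pivotal observation is the algebraic identity
\[
p_1 + (-t)\cdot\frac{p_1-\SWt}{t}=\SWt,
\]
so that if one installs the candidate $\HALpa = (p_1-\SWt)/t$, the inner expression $p_1+(-t)\HALpa$ appearing in all four defining equations collapses to $\SWt$. Combined with the preceding proposition, which supplies $\comm\circ\SWt = (p_1-\SWt)/(t\SWt)$, every right-hand side becomes an explicit expression in $\SWt$.

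For the $\HALpa$ equation, I would substitute $p_1+(-t)\HALpa=\SWt$ inside the outer composition, apply $\comm$ using equation \eqref{CommSWt}, and then substitute the result into the rational series $p_1/(1+tp_1)$. A short algebraic simplification gives $(p_1-\SWt)/(tp_1)$; multiplication by the outer $p_1$ returns $(p_1-\SWt)/t$, confirming the fixed-point equation. The formula for $\HALa$ follows at once: substituting $\SWt = p_1+(-t)\HALpa$ into \eqref{5.16} gives $\HALa = (\comm - p_1)\circ\SWt$ directly.

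For $\HALp$, the substitution into its defining equation yields
\[
\HALp = p_1\Bigl(\Stlie\circ\frac{p_1-\SWt}{t\SWt}\Bigr).
\]
To recast this in the stated form, I would invoke the scaling compatibility between the $t$-suspension $\Stlie$ and the ordinary suspension $\Slie$ of the Lie operad, namely $\Stlie\circ(Z/t) = (1/t)\,\Slie\circ Z$, applied with $Z=(p_1-\SWt)/\SWt$. This converts the expression into the claimed $(p_1/t)(\Slie\circ (p_1-\SWt)/\SWt)$.

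The main obstacle is the last step: justifying the compatibility relation between $\Stlie$ and $\Slie$. The earlier manipulations are purely algebraic inside the ring of cycle indices and use only the defining relation $(t\perm - tp_1 + p_1)\circ\SWt=p_1$ and its consequence in \eqref{CommSWt}, whereas the final rewriting depends on the precise convention chosen for the $t$-weighted suspension functor on cycle indices. Once that compatibility is recorded (or quoted from the appendix on species), the three identities follow with no further work, and the fourth identity $\HAL=\HALp+\HALa-\HALpa$ is then a direct combination.
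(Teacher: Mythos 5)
Your proposal follows essentially the same route as the paper: verify that $\frac{p_1-\SWt}{t}$ satisfies the defining fixed-point equation for $\HALpa$ (using $p_1+(-t)\frac{p_1-\SWt}{t}=\SWt$ together with equation \eqref{CommSWt}, plus uniqueness of the solution), then read off $\HALa$ from \eqref{5.16} and $\HALp$ from its defining equation. The compatibility you flag as the remaining obstacle is exactly the identity $\Stlie=\frac{1}{t}\,\Slie\circ(tp_1)$ used in the paper, which is immediate from the definition of the suspension $\Sigma_t$ in the appendix, so your argument is complete at the same level of detail as the original.
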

 	
 	\begin{proof}
 	\begin{enumerate}
 	\item Applying equation \eqref{CommSWt}, a computation gives: 
 	\begin{equation*}
 	p_1(\frac{\comm \circ \SWt}{1+t \comm \circ \SWt}) = p_1 \frac{p_1 - \SWt}{t \SWt + t p_1 - t \SWt},
 	\end{equation*}
 	hence the relation: 
 	\begin{equation*}
 	\frac{p_1 - \SWt}{t}=p_1(\frac{p_1}{1+tp_1}) \circ \comm \circ (p_1 + (-t) \frac{p_1 - \SWt}{t}).
 	\end{equation*}
 	The series $\HALpa$ and $\frac{p_1 - \SWt}{t}$ satisfy the same functional equation. Moreover if we know the first $n$ terms of a solution of this equation, the equation gives the $n+1$-th one: there is a unique solution of this equation, such that the coefficient of $x^0$ vanishes. Therefore, $\HALpa$ and $\frac{p_1 - \SWt}{t}$ are equals.
 	
 	: they are hence equals.
 	
 	\item The second equality results from the first one and equation \eqref{5.16} because the series $\SWt$ satisfies: 
 	\begin{equation*}
 	p_1 + (-t) \HALpa = \SWt.
 	\end{equation*}
 	
 	\item According to the first relation of the proposition, the series $\HALp$ satisfies: 
 	\begin{equation*}
 	\HALp=p_1(\Stlie \circ \comm \circ \SWt ).
 	\end{equation*}
 	
 	The equality $\Stlie = \frac{1}{t} \Slie \circ (t p_1)$ implies: 
 	\begin{equation*}
 	\HALp=\frac{p_1}{t}(\Slie \circ t\comm \circ \SWt ).
 	\end{equation*}
 	
 	Applying equation \eqref{CommSWt}, we get the result. 	
 	
	\end{enumerate}

 	\end{proof}

	\subsection{Character computation}
	
		\subsubsection{Computation of series for $k=0$}
		
		We can compute the following series: 
		
		\begin{prop}\begin{enumerate}
		\item The series $\Zot$ can be expressed as: 
		\begin{equation} \label{5.22}
		\Zot=\comm - p_1 +\frac{p_1}{t}.
		\end{equation}
		\item The series $\Zpot$ can be expressed as: 
		\begin{equation}\label{5.23}
		\Zpot=\perm-p_1+\frac{p_1}{t}=p_1\comm +\frac{p_1}{t}.
		\end{equation}
		The series $t \Zpot$ is then the inverse of series $\SWt$ for substitution.
		\item The series $\ZAot$ can be expressed as: 
		\begin{equation}\label{5.24}
		\ZAot=\comm - p_1.
		\end{equation}	
		\item The series $\ZPAot$ can be expressed as: 
		\begin{equation} \label{5.25}
		\ZPAot=\perm - p_1 = p_1 \comm.
		\end{equation}
		\end{enumerate}
		\end{prop}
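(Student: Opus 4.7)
The plan is to compute each of the four series by direct character counting, following the unweighted template $\Zo=\comm$, $\Zpo=\perm$ used in the proof of Theorem~\ref{Zmu}. The unique large $0$-chain on a vertex set $I$ is the empty chain and is fixed by every element of $\mathfrak{S}_I$; the only ingredient beyond the unweighted case is the weight $w(S)=\#\mathrm{edge}(\max S)-1$. For the empty chain I adopt the natural convention identifying $\max S$ with the minimum $\hat{0}_I$ of $\operatorname{HT}(I)$: this gives $w=0$ when $|I|\ge 2$ (the single big edge) and $w=-1$ when $|I|=1$ (the edge-free hypertree).

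Summing the trivial $\mathfrak{S}_n$-character weighted by $t^{w}$ then yields \eqref{5.22} at once: $\Zot=(\comm-p_1)+p_1/t$. For \eqref{5.23} I apply the derivation relation \eqref{dert}: from $\perm=p_1(1+\comm)$ one has $p_1\,\partial\comm/\partial p_1=\perm$, so
$$\Zpot=p_1\,\frac{\partial\Zot}{\partial p_1}=p_1\comm+\frac{p_1}{t}=\perm-p_1+\frac{p_1}{t}.$$
The substitution-inverse claim is then immediate: $t\Zpot=t\perm-tp_1+p_1$, which is by definition the series whose substitution inverse is $\SWt$.

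For \eqref{5.24} and \eqref{5.25} I again count directly. Edge-pointing the max $\hat{0}_I$ is impossible for $|I|=1$ (no edges) and unique for $|I|\ge 2$, so $\ZAot=\comm-p_1$. Additionally choosing a root selects one of the $|I|$ vertices in the nontrivial case, giving $\ZPAot=\perm-p_1=p_1\comm$. The only real subtlety throughout is pinning down the weight convention for the empty chain; once adopted, every claim is a one-line count. As a consistency check, the dissymmetry relation \eqref{rel diss Zt} specialised at $k=0$ (where min and max coincide, so the lower- and upper-case pointed species agree) reduces to the algebraic identity
$$\Zot+\ZPAot=\Zpot+\ZAot,$$
which is verified directly from the four formulas above.
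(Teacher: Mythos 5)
Your four formulas are correct, and for items (1) and (2) your argument is exactly the paper's: the empty chain is the only structure fixed by a permutation, its weight contributes $t^0$ except $t^{-1}$ when $n=1$, and $\Zpot$ then follows from the derivation relation \eqref{dert}; the substitution-inverse remark is, as you say, immediate from the definition of $\SWt$. For items (3) and (4) you take a genuinely different route: the paper never counts $\ZAot$ and $\ZPAot$ directly, but uses the coincidences $\ZAut=\Zaut$ and $\ZPAut=\Zpaut$ at $k=1$, compares \eqref{ZAkt} with \eqref{Zakt} and \eqref{ZPAkt} with \eqref{Zpakt}, and cancels the common substitution by $t\,\mathbf{C}^p_{1,t}$ to obtain $\ZAot=\Zot-\frac{p_1}{t}$ and $\ZPAot=\Zpot-\frac{p_1}{t}$. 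Your direct count is shorter and more concrete; the paper's detour needs nothing beyond the already-proved functional equations and the values of $\Zot$, $\Zpot$.

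The price of your shortcut is a point you should make explicit. The series $\ZAot$, $\ZPAot$ that get used later (relations \eqref{ZAkt}, \eqref{ZPAkt} are evaluated at $k=0$ in theorem \ref{goalt}) are values at $k=0$ of polynomials in $k$, and a naive count at $k=0$ under a convention is not automatically that value: for the \emph{minimum}-pointed species it is false, since $\Zao=\comm+(p_1-1)\perm$ (equivalently $\Sao=\sum_{n\ge2}(n-1)^2\frac{x^n}{n!}$ in proposition \ref{SaoSpao}), not $\comm-p_1$. Your count is nevertheless the right value for the \emph{maximum}-pointed species, because in the decomposition of proposition \ref{prop prec} the maximum of a large chain, hence both the weight and the edge/root pointing data, depends only on the associated strict chain (and is $\hat{0}$ when that strict chain is empty), so the weighted pointed character is of the form $\sum_i\binom{k}{i}\chi_i$ and its value at $k=0$ is exactly your $\hat{0}$-count; either this remark or the paper's passage through $k=1$ is needed to turn your one-line counts into a proof. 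Finally, the justification of your consistency check is incorrect as stated: the lower- and upper-case pointed species coincide at $k=1$, not at $k=0$ (at $k=0$ one has $\Zaot\neq\ZAot$, by the comparison just made). The identity $\Zot+\ZPAot=\Zpot+\ZAot$ that you verify is the $k=0$ case of the dissymmetry relation for maximum-pointed chains, i.e.\ the third relation of the theorem introducing $\mathcal{H}^{A}_{k,t}$ and $\mathcal{H}^{pA}_{k,t}$, not a specialisation of \eqref{rel diss Zt}.
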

	
	\begin{proof}
	\begin{enumerate} 
	\item The only hypertrees chain fixed by the action of an element $\sigma$ of the symmetric group $\mathfrak{S}_n$ is the empty chain. Nevertheless, the weight of the empty chain is $1$, except for $n=1$, where it is equal to $\frac{1}{t}$. Therefore the series $\Zot$ only differs from $\comm$ for $n=1$, hence the result.
	\item As $p_1 \frac{\partial \comm }{\partial p_1} = \perm$, the result comes from relation \eqref{dert} with $k=0$.
	
	\item By definition, $\ZAut = \Zaut$, with relations \eqref{ZAkt} and \eqref{Zakt}, the series $\ZAot$ satisfies: 
	\begin{equation*}
	\ZAot=\Zot - \frac{p_1}{t} = \comm - p_1.
	\end{equation*}
		
	\item By definition, $\ZPAut = \Zpaut$, with relations \eqref{ZPAkt} and \eqref{Zpakt}, the series $\ZPAot$ satisfies: 
	\begin{equation*}
	\ZPAot=\Zpot - \frac{p_1}{t} = \perm - p_1 = p_1 \comm.
	\end{equation*}
	
	\end{enumerate}
	\end{proof}
	
				\subsubsection{Computation of the series for $k=-1$}
				
		The following theorem refines the computation of the characteristic polynomial in \cite{ChHyp}, proves the conjecture of \cite[Conjecture 5.3]{ChHyp} and links the action of the symmetric group on Whitney homology of the hypertree poset with the action of symmetric group on a set of hypertrees decorated by the $\operatorname{Lie}$ operad.
				
		\begin{thm} \label{goalt}
		\begin{enumerate}
		\item The series $\ZPAmut$ satisfies: 
		\begin{equation}
		\ZPAmut = \frac{p_1 - \SWt}{t} =\HALpa.
		\end{equation}
		\item The series $\ZAmut$ satisfies:
		\begin{equation}
		\ZAmut= (\comm - p_1) \circ \SWt =\HALa.
		\end{equation}	
		\item The series $\Zpmut$ satisfies:
		\begin{equation}
		\Zpmut= \frac{p_1}{t}(1+\Slie \circ \frac{p_1 - \SWt}{\SWt})= \HALp + \frac{p_1}{t}.
		\end{equation}
		\item The series $\Zmut$ satisfies:
		\begin{equation}
		\Zmut=\HAL+\frac{p_1}{t}.
		\end{equation}
		\end{enumerate}
		\end{thm}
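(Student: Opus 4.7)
The plan is to invert the relations \eqref{ZAkt}, \eqref{ZPAkt}, \eqref{Zpkt} at $k=0$ using that $t\Zpot$ and $\SWt$ are inverses for plethystic substitution (as noted just after \eqref{5.23}), plug in the closed forms \eqref{5.22}--\eqref{5.25}, and finally invoke the dissymmetry relation for maximum-pointed chains to get $\Zmut$. Comparison with the explicit formulas of Theorem \ref{expr HAL} will finish each item.

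For parts (1) and (2), the relations \eqref{ZAkt}, \eqref{ZPAkt} applied at $k=0$ read
\begin{equation*}
\ZAot = \ZAmut \circ (t\Zpot), \qquad \ZPAot = \ZPAmut \circ (t\Zpot),
\end{equation*}
so composing on the right by $\SWt$ yields $\ZAmut = \ZAot \circ \SWt = (\comm-p_1)\circ\SWt$ and $\ZPAmut = \ZPAot \circ \SWt = (p_1\comm)\circ\SWt = \SWt\cdot(\comm\circ\SWt)$. The identity \eqref{CommSWt} then turns the latter into $\frac{p_1-\SWt}{t}$, and Theorem \ref{expr HAL} identifies these as $\HALa$ and $\HALpa$ respectively.

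For part (3), at $k=0$ equation \eqref{Zpkt} gives
\begin{equation*}
\frac{t\Zpot-p_1}{p_1} = \comm\circ\Bigl(\tfrac{t\Zpmut-p_1}{p_1}\circ(t\Zpot)\Bigr).
\end{equation*}
Using $\Slie$ as the plethystic inverse of $\comm$ and then composing with $\SWt$, one obtains
\begin{equation*}
\frac{t\Zpmut-p_1}{p_1} = \Slie\circ\Bigl(\tfrac{t\Zpot-p_1}{p_1}\circ\SWt\Bigr).
\end{equation*}
Since $t\Zpot = tp_1\comm + p_1$ by \eqref{5.23}, we have $\frac{t\Zpot-p_1}{p_1} = t\comm$, and again \eqref{CommSWt} gives $(t\comm)\circ\SWt = \frac{p_1-\SWt}{\SWt}$. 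Solving yields $\Zpmut = \frac{p_1}{t}\bigl(1+\Slie\circ\frac{p_1-\SWt}{\SWt}\bigr)$, which by Theorem \ref{expr HAL} is $\HALp + \frac{p_1}{t}$.

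Finally, for part (4), I would apply the maximum-pointed dissymmetry $\Zkt+\ZPAkt = \Zpkt+\ZAkt$ at $k=-1$, getting $\Zmut = \Zpmut + \ZAmut - \ZPAmut$, and substitute the three formulas just proved to obtain $\Zmut = \HALp + \HALa - \HALpa + \frac{p_1}{t} = \HAL + \frac{p_1}{t}$. The main obstacle is really bookkeeping: one must verify that the plethystic compositions commute with scalar factors involving $t$, that $\SWt$ does act as a two-sided inverse on the relevant power series without constant term (so that the $\frac{p_1}{t}$ terms in $\Zpot$ and $\frac{p_1}{t}$ from $\frac{p_1}{t}\circ(t\Zpot)=\Zpot$ cancel correctly), and that equation \eqref{Zpkt} can legitimately be inverted by $\Slie$. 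Once these formal manipulations are justified, the four identities are pure algebra.
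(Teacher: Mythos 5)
Your proof is correct and follows essentially the same route as the paper: invert the $k=0$ relations \eqref{ZAkt}, \eqref{ZPAkt}, \eqref{Zpkt} by composing with $\SWt$ (the substitution inverse of $t\Zpot$), simplify with \eqref{CommSWt} and the plethystic inverse $\Slie$, and finish with the maximum-pointed dissymmetry relation together with Theorem \ref{expr HAL}. The only difference is cosmetic (you apply $\Slie$ before composing with $\SWt$ in part (3), while the paper does it in the opposite order), so nothing further is needed.
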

		
		\begin{proof}
		The right part of equalities is given by theorem \ref{expr HAL}.
		
		\begin{enumerate}
		\item Relation \eqref{ZPAkt} with $k=0$ gives, together with equations \eqref{5.23} and \eqref{5.25}: 
		\begin{equation*}
		\ZPAmut = (p_1 \comm) \circ \SWt.
		\end{equation*}
		
		We then conclude thanks to equation \eqref{CommSWt}.
		
		\item Relation \eqref{ZPAkt} with $k=0$ gives, together with equations \eqref{5.23} and \eqref{5.24}: 
		 
		\begin{equation*}
		\ZAmut = (\comm - p_1) \circ \SWt,
		\end{equation*}
		hence the result.
		
		\item As $\SWt$ is the inverse of $t \Zpot$, relation \eqref{Zpkt} with $k=0$ gives: 
		\begin{equation*}
		p_1 = \SWt (1+ \comm \circ \frac{t \Zpmut - p_1}{p_1})
		\end{equation*}
		
		as $\Slie \circ \comm = p_1$ according to \cite{ChHyp}, we obtain: 
		\begin{equation*}
		\Slie \circ \frac{p_1 - \SWt}{\SWt}=  \frac{t \Zpmut - p_1}{p_1}.
		\end{equation*}
		
		We thus obtain the result.
		
		\item This relation comes from previous relations associated with the dissymmetry principle.
		\end{enumerate}
		\end{proof}

\appendix

\section{Reminder on species} 
\label{rappel espèces}

We give in this part only a brief reminder on species. The reader will find more on this subject in \cite{BLL}.

\begin{defi} A \emph{species} $\operatorname{F}$ is a functor from the category of finite sets and bijections to the category of finite sets. To a finite set $I$, the species $\operatorname{F}$ associate a finite set $\operatorname{F}(I)$ independent from the nature of $I$.
\end{defi}

\begin{exple} 
\begin{itemize}
\item The map which associates to a finite set $I$ the set of total orders on $I$ is a species, called the linear order species and denoted by $L$.
\item The map which associates to a finite set $I$ the set  $\{I\}$ is a species, called the set species and denoted by $E$.
\item The map defined for all finite set $I$ by: 
\begin{equation*}
I \mapsto \left\lbrace
\begin{array}{ll}
\{I\} & \text{if } \# I = 1, \\
\emptyset & \text{otherwise},
\end{array} \right.
\end{equation*}
is a species, called singleton species and denoted by $X$.
\item The map defined for all finite set $I$ by: 
\begin{equation*}
I \mapsto \left\lbrace
\begin{array}{ll}
\{I\} & \text{if } \# I \geq 1, \\
\emptyset & \text{otherwise},
\end{array} \right.
\end{equation*}
is a species denoted by ${Comm}$, and called species associated with the $\operatorname{Comm}$ operad.
\item The map which associates to a finite set $I$ the set $I$ is a species, called the pointed set species and denoted by ${Perm}$. It is associated with the $\operatorname{Perm}$ operad.
\item The map which associates to a finite set $I$ the set of labelled rooted trees with labels in $I$ is a species denoted by $PreLie$, associated with the $\operatorname{PreLie}$ operad.
\end{itemize}
\end{exple}

To each species $F$, we can associate the following generating series: 
\begin{equation*}
C_F(x)=\sum_{n \geq 0} \# F(\{1,\ldots, n\}) \frac{x^n}{n!}. 
\end{equation*}

\begin{exple} The generating series of species defined previously are: 
\begin{itemize}
\item $C_L(x)=\frac{1}{1-x}$,
\item $C_E(x)=\exp(x)$,
\item $C_X(x)=x$,
\item $C_{\operatorname{Comm}}(x)=\exp(x)-1$ .
\end{itemize}
\end{exple}

The following operations can be defined on species:
 
\begin{defi} Let $F$ and $G$ be two species. We define the following operations on species: 
\begin{itemize}
\item $F'(I)=F(I \sqcup \{\bullet \} )$, (differentiation)
\item $(F + G )(I)=F(I) \sqcup G(I)$, (addition)
\item $(F\times G )(I)=F(I) \times G(I)$, (product)
\item $(F \circ G) (I)=\bigsqcup_{\pi \in \mathcal{P}(I)} F(\pi) \times \prod_{J \in \pi} G(J) $, (substitution)

where $\mathcal{P}(I)$ runs on the set of partitions of $I$.
\end{itemize}
\end{defi}

We have the following property: 
\begin{prop} Let $F$ and $G$ be two species. Their generating series satisfy: 
\begin{itemize}
\item $C_{F'}=C_F'$,
\item $C_{F + G}=C_F + C_G$,
\item $C_{F \times G}=C_F \times C_G$,
\item $C_{F \circ G}=C_F \circ C_G$.
\end{itemize}
\end{prop}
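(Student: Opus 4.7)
The plan is to evaluate each of the compositional relations \eqref{ZPAkt}, \eqref{ZAkt}, \eqref{Zpkt} together with the dissymmetry principle $\mathcal{H}_{{k,t}}+\mathcal{H}^{pA}_{{k,t}}=\mathcal{H}^{p}_{{k,t}}+\mathcal{H}^{A}_{{k,t}}$ at $k=0$, then invert them using that $\SWt$ is the substitutional inverse of $t\Zpot$, to obtain closed expressions for each $k=-1$ series. Identification with the $\HAL$ series is then immediate from Theorem \ref{expr HAL}; so in each case it suffices to establish the left-hand equality.

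For parts (1) and (2), specializing \eqref{ZPAkt} and \eqref{ZAkt} at $k=0$ gives $\ZPAot = \ZPAmut \circ (t\Zpot)$ and $\ZAot = \ZAmut \circ (t\Zpot)$. Right-composing both with $\SWt$ strips the outer $t\Zpot$, yielding $\ZPAmut = \ZPAot \circ \SWt = (p_1\comm)\circ \SWt$ and $\ZAmut = \ZAot \circ \SWt = (\comm-p_1)\circ \SWt$, where I use the explicit values \eqref{5.24} and \eqref{5.25}. A direct computation with the key identity \eqref{CommSWt}, namely $\comm\circ \SWt = (p_1-\SWt)/(t\SWt)$, collapses the first to $\SWt\cdot(p_1-\SWt)/(t\SWt) = (p_1-\SWt)/t$, settling (1); the expression for $\ZAmut$ is already in the required form.

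For part (3), I would specialize \eqref{Zpkt} at $k=0$, multiply through by $t$, and right-compose with $\SWt$. Using $t\Zpot \circ \SWt = p_1$ strips the inner $\circ\, t\Zpot$, producing $p_1 = \SWt\bigl(1 + \comm\circ\frac{t\Zpmut-p_1}{p_1}\bigr)$, so that $\comm\circ\frac{t\Zpmut-p_1}{p_1} = \frac{p_1-\SWt}{\SWt}$. Left-composing with $\Slie$ and invoking the Koszul-duality identity $\Slie\circ\comm = p_1$ (from \cite{ChHyp}) yields $\frac{t\Zpmut-p_1}{p_1} = \Slie\circ\frac{p_1-\SWt}{\SWt}$, from which $\Zpmut = \frac{p_1}{t}\bigl(1+\Slie\circ\frac{p_1-\SWt}{\SWt}\bigr)$ follows. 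For part (4), the dissymmetry principle in its capital-$A$ form, evaluated at $k=-1$ after polynomial extension in $k$, gives $\Zmut = \Zpmut + \ZAmut - \ZPAmut$; substituting the three formulas just established and then using the identifications of Theorem \ref{expr HAL} produces $\Zmut = \HALp + \HALa - \HALpa + \frac{p_1}{t} = \HAL + \frac{p_1}{t}$.

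The main conceptual point to keep clean is the invertibility of plethystic composition: one must verify that $\SWt$ genuinely acts as a two-sided substitutional inverse of $t\Zpot$ (guaranteed by the definition of $\SWt$ preceding \eqref{CommSWt}), and that the coefficients of all series involved are polynomial in $k$ so that specialization at $k=-1$ is legitimate (already justified in the paragraph following \eqref{dert} and in Section \ref{chLarg}). With these in hand, the proof is essentially bookkeeping of substitutions; the only non-routine step is the inversion in part (3), where the Koszul duality between $\operatorname{Lie}$ and $\operatorname{Comm}$ is essential to extract $\Zpmut$ from a composition by $\comm$.
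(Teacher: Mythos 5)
Your proposal does not prove the statement it is attached to. The statement is the elementary compatibility of exponential generating series with the four species operations: $C_{F'}=C_F'$, $C_{F+G}=C_F+C_G$, $C_{F\times G}=C_F\times C_G$ and $C_{F\circ G}=C_F\circ C_G$. What you have written is instead a proof of Theorem \ref{goalt} (the identification of $\ZPAmut$, $\ZAmut$, $\Zpmut$ and $\Zmut$ with the $\HAL$ series via inversion of $t\Zpot$ by $\SWt$). That argument is coherent and matches the paper's own treatment of that theorem, but it establishes none of the four identities above and relies on machinery (weighted chains, plethysm, Koszul duality) that is irrelevant to them. As a proof of the stated proposition, it is a complete mismatch.

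For the record, the proposition is a standard fact (stated without proof in the paper's appendix, with \cite{BLL} as reference) and is proved directly from the definitions: $\#F'(\{1,\ldots,n\})=\#F(\{1,\ldots,n+1\})$ gives the derivative rule; disjoint union of structure sets gives additivity of coefficients; the species product, which distributes the underlying set into two complementary blocks, turns the coefficient sequence into a binomial convolution, which is exactly the Cauchy product of exponential generating series; and the substitution $(F\circ G)(I)=\bigsqcup_{\pi}F(\pi)\times\prod_{J\in\pi}G(J)$, summed over set partitions $\pi$ of $I$, yields the compositional formula $C_F\circ C_G$ by the exponential formula. None of these steps appears in your text.
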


\section{Reminder on cycle index}	

\label{Annexe ind cycl}
	
	Let ${F}$ be a species. We can associate a formal power series to it: its cycle index. The reader can consult \cite{BLL} for a reference on this subject. This formal power series is a symmetric function defined as follow:
	
	\begin{defi}
	The \textbf{cycle index} of a species $F$ is the formal power series in an infinite number of variables $\p=(p_1, p_2, p_3, \ldots )$ defined by: 
	
	$$\textbf{C}_F(\p)= \sum_{n \geq 0} \frac{1}{n!} \left( \sum_{\sigma \in \mathfrak{S}_n} F^\sigma p_1^{\sigma_1} p_2^{\sigma_2} p_3^{\sigma_3} \ldots \right), $$	
	
	where $F^\sigma$ stands for the set of $F$-structures fixed under the action of $\sigma$ and where $\sigma_i$ is the number of cycles of length $i$ in the decomposition of $\sigma$ into disjoint cycles.
		\end{defi}
		
		We can define the following operations on cycle indices.
		
		\begin{defi}
		The operations $+$ and $\times$ on cycle indices are the same as on formal series.

		For $f=f(\p)$ and $g=g(\p)$, plethystic substitution $f \circ g$ is defined by: 
		\begin{equation*}
		f \circ g (\p) = f(g(p_1, p_2, p_3, \ldots), g(p_2, p_4, p_6, \ldots),  \ldots, g(p_{k}, p_{2k}, p_{3k}, \ldots), \ldots).
		\end{equation*}
		It is left-linear.
		\end{defi}

		This operations satisfy: 

		\begin{prop}
		Let $F$ and $G$ be two species. Their cycle indices satisfy: 
		\begin{equation*}
		\begin{array}{rlrl}
		\textbf{C}_{F+G}&=\textbf{C}_F+\textbf{C}_G, & \textbf{C}_{F \times G}&=\textbf{C}_F \times \textbf{C}_G, \\
		\textbf{C}_{F\circ G}&=\textbf{C}_F \circ \textbf{C}_G, & \textbf{C}_{F'}&= \frac{\partial \textbf{C}_{F} }{\partial p_1}.
		\end{array}
		\end{equation*}
		\end{prop}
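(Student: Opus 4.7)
The plan is to verify each of the four identities directly from the definition $\textbf{C}_F(\p) = \sum_n \frac{1}{n!} \sum_{\sigma \in \mathfrak{S}_n} |F[n]^\sigma| \, p_1^{\sigma_1} p_2^{\sigma_2} \cdots$, in increasing order of difficulty. For the sum, the decomposition $(F+G)[I] = F[I] \sqcup G[I]$ gives $|(F+G)[n]^\sigma| = |F[n]^\sigma| + |G[n]^\sigma|$ for every $\sigma$, and additivity follows term by term.

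For the derivative, recall $F'[I] = F[I \sqcup \{\bullet\}]$ with $\sigma \in \mathfrak{S}_I$ acting by extending $\sigma$ to fix $\bullet$, so the cycle type of the extended permutation is $(\sigma_1 + 1, \sigma_2, \sigma_3, \ldots)$. I would derive the formula by re-indexing the sum defining $\partial \textbf{C}_F / \partial p_1$: differentiating $p_1^{\tau_1}$ produces the factor $\tau_1$, which counts the fixed points of $\tau \in \mathfrak{S}_m$, and coupled with $\frac{1}{m!} \cdot m = \frac{1}{(m-1)!}$, this bijection --- choose a fixed point to play the role of $\bullet$, then restrict $\tau$ to the remaining $m-1$ elements --- matches the defining series of $\textbf{C}_{F'}$ term by term.

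For the product, understood as the Cauchy product $(F \cdot G)[I] = \bigsqcup_{I = A \sqcup B} F[A] \times G[B]$ in the convention of \cite{BLL}, a permutation $\sigma \in \mathfrak{S}_I$ fixes a triple $(A, f, g)$ iff it stabilizes $A$ (and hence $B$), with $\sigma|_A$ fixing $f$ and $\sigma|_B$ fixing $g$. The cycle type of $\sigma$ then decomposes as the multiset union of the cycle types of $\sigma|_A$ and $\sigma|_B$, so $p_\sigma = p_{\sigma|_A} \cdot p_{\sigma|_B}$; summing over the $\binom{n}{|A|}$ choices of subset $A \subseteq [n]$ and absorbing this binomial into the factorial normalization reproduces the product of the two defining series as required.

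The main obstacle is the composition formula, whose proof hinges on analyzing how $\sigma \in \mathfrak{S}_I$ acts on an $(F \circ G)$-structure $(\pi, \varphi, (\gamma_J)_{J \in \pi})$ consisting of a partition $\pi$ of $I$, an $F$-structure $\varphi$ on $\pi$, and a $G$-structure $\gamma_J$ on each block. Such a structure is fixed by $\sigma$ exactly when $\sigma$ permutes the blocks of $\pi$, the induced $\bar\sigma \in \mathfrak{S}_\pi$ fixes $\varphi$, and for each cycle $(J_1 \, J_2 \, \cdots \, J_d)$ of $\bar\sigma$ the blocks $J_i$ all have the same size $m$, with $\sigma^d|_{J_1}$ fixing $\gamma_{J_1}$ and the remaining $\gamma_{J_i}$ determined by transport along $\sigma$. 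The crucial combinatorial identity, which I would establish by carefully counting such configurations in terms of the cycle type of $\sigma^d|_{J_1}$, is that the contribution of a single $d$-cycle of $\bar\sigma$ to the total sum equals the plethystic substitution $p_d \circ \textbf{C}_G = \textbf{C}_G(p_d, p_{2d}, p_{3d}, \ldots)$. Multiplying these contributions across all cycles of $\bar\sigma$ and summing over $\varphi$ and $\bar\sigma$ then converts the factor $p_{\bar\sigma} = \prod_d p_d^{\bar\sigma_d}$ appearing in $\textbf{C}_F$ into $\prod_d \textbf{C}_G(p_d, p_{2d}, \ldots)^{\bar\sigma_d}$, which by definition is $\textbf{C}_F \circ \textbf{C}_G$. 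This cycle-on-cycle bookkeeping --- tracking how the cycle type of $\sigma$ on $I$ refines through both $\bar\sigma$ on the quotient and the internal action on each orbit of blocks --- is the only step requiring genuine care; the other three identities are essentially formal.
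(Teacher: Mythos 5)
The paper does not actually prove this proposition: it appears in Appendix B as a ``reminder'' and is simply quoted from \cite{BLL}, so there is no proof of record to compare against. Your argument is the standard textbook proof of these facts and is correct in outline. Two remarks. First, you were right to reinterpret the product: the paper's literal definition $(F\times G)(I)=F(I)\times G(I)$ is the Hadamard product, for which $\textbf{C}_{F\times G}=\textbf{C}_F\times\textbf{C}_G$ would be false; the intended operation (consistent with the substitution formula and with the stated identity $C_{F\times G}=C_F\, C_G$ for exponential generating series) is the Cauchy product $\bigsqcup_{I=A\sqcup B}F(A)\times G(B)$, and your fixed-point analysis for it, with the $\binom{n}{|A|}$ count absorbed into $\frac{1}{n!}$, is the right computation. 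Second, the sum, product and derivative cases are complete as written, but the composition case is still a plan at precisely the point where all the work lies: the claim that a $d$-cycle of $\bar\sigma$ acting on blocks of common size $m$ contributes $\textbf{C}_G(p_d,p_{2d},p_{3d},\ldots)$ requires checking that each cycle of length $\ell$ of $\sigma^d|_{J_1}$ assembles into a single cycle of length $d\ell$ of $\sigma$ on the orbit $J_1\cup\cdots\cup J_d$ (whence the substitution $p_\ell\mapsto p_{d\ell}$), and that the count of ways to distribute the orbit into $d$ transported blocks exactly cancels the normalizing factorials. You have named the identity correctly but not established it; since this is the only nontrivial step of the whole proposition, a complete write-up would need that counting carried out (or an explicit appeal to \cite{BLL}, which is all the paper itself does).
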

				
		Moreover, we define the following operation: 
		
		\begin{defi}  The \emph{suspension} $\Sigma_t$ of a cycle index $f(p_1, p_2, p_3, \ldots)$ is defined by: 
		\begin{equation*}
		\Sigma_t f = - \frac{1}{t} f(-tp_1, -t^2 p_2, -t^3 p_3, \ldots). 
		\end{equation*}
		By convention, we will write $\Sigma$ for the suspension in $t=1$.
		\end{defi}

\bibliographystyle{alpha}
\bibliography{bibli}

\end{document}